\documentclass[a4paper,12pt]{amsart}

\usepackage{fullpage}
\usepackage{enumerate, mathrsfs}
\usepackage{xcolor}

\usepackage{amsmath,amsfonts,amsthm,amssymb}
\usepackage[all]{xy}
\usepackage[T1]{fontenc}

\usepackage[colorlinks=true]{hyperref}
\usepackage{cleveref}
\usepackage{tikz}
\usetikzlibrary{shapes.geometric}
\usetikzlibrary{calc}
\usetikzlibrary{arrows}

\usepackage[colorinlistoftodos]{todonotes}
\setlength{\marginparwidth}{2cm}

\newcommand{\ngon}[8]{ % #1 = number of vertices
                       % #2 = center point
                       % #3 = diameter
                       % #4 = list of diagonals/color
                       % #5 = shorten length
                       % #6 = nodes objects
                       % #7 = label name
                       % #8 = rotation angle

  \foreach \t in {1,...,#1} {
    \coordinate (#7\t) at ($#2+(#8-\t*360/#1:#3)$);
  }

  \foreach \x/\y/\z in {#4}{
    \draw[thick,\z,shorten <=#5pt, shorten >=#5pt] {(#7\x)--(#7\y)};
  }

  \setcounter{intege}{1}
  \pgfmathsetcounter{intege}{1}
  \foreach \object in {#6}{
    \node[inner sep=0pt] at (#7\theintege) {\object};
    \pgfmathsetcounter{intege}{\theintege+1}
    \setcounter{intege}{\theintege}
  }
}
\newcounter{intege}

\theoremstyle{plain}
\newtheorem{theorem}{Theorem}[section]
\newtheorem{proposition}[theorem]{Proposition}
\newtheorem{corollary}[theorem]{Corollary}
\newtheorem{lemma}[theorem]{Lemma}

\theoremstyle{definition}
\newtheorem{definition}[theorem]{Definition}
\newtheorem{example}[theorem]{Example}
\newtheorem{counterexample}[theorem]{Counterexample}
\newtheorem{conjecture}[theorem]{Conjecture}
\newtheorem{remark}[theorem]{Remark}

\crefname{counterexample}{counterexample}{counterexamples}
\Crefname{counterexample}{Counterexample}{Counterexamples}
\crefname{conjecture}{conjecture}{conjectures}
\Crefname{conjecture}{Conjecture}{Conjectures}

\definecolor{lightgrey}{rgb}{0.7,0.7,0.7}

\definecolor{darkred}{rgb}{0.7,0,0} % darkred color
\newcommand{\darkred}{\color{darkred}} % darkred command

\newcommand{\U}{\mathcal U}
\newcommand{\C}{\mathcal C}

\newcommand{\K}{\mathcal K}
\newcommand{\D}{\mathcal D}

\newcommand{\family}{\mathcal F}
\newcommand{\smallfamily}{\mathcal S}
\newcommand{\facetA}{A}
\newcommand{\facetB}{B}
\newcommand{\facetC}{C}
\newcommand{\face}{\sigma}
\newcommand{\smallface}{\tau}

\DeclareMathOperator{\link}{lk}
\DeclareMathOperator{\st}{st}
\DeclareMathOperator{\vertices}{V}

\newcommand{\defn}[1]{\emph{\darkred #1}} % emphasis of a definition
\newcommand{\set}[2]{\left\{ #1 \; : \; #2 \right\}}
\newcommand{\lkcap}[3]{\operatorname{lk}_{#1} (#2,#3)}

%%%%%%%%%%%%%%%%%%%%%%%%%%%%%%%%%%%%%%%%%%%%%%%%%%%%%%%%%%%
\subjclass[2010]{Primary 05E45; Secondary 05D05, 05C35}

% 05E45: Combinatorial aspects of simplicial complexes
% 52C10: Erdős problems and related topics of discrete geometry [See also 11Hxx]
% 05D05: Extremal set theory
% 05C35: Extremal problems in graph theory

\title[EKR for flag pure complexes without boundary]
{The EKR property for flag pure \\ simplicial complexes without boundary}

\date{\today}

\author[J.~Olarte]{Jorge Olarte}
\author[F.~Santos]{Francisco Santos}
\author[J.~Spreer]{Jonathan Spreer}
\author[C.~Stump]{Christian Stump}

\address[J.~Olarte and J.~Spreer]
{
Institut f\"ur Mathematik, Freie Universit\"at Berlin, Germany
}
\email{olarte@zedat.fu-berlin.de}
\email{jonathan.spreer@fu-berlin.de}

\address[F.~Santos]
{
Department of Mathematics, Statistics and Computer Science, University of Cantabria, Spain
}
\email{francisco.santos@unican.es}

\address[C.~Stump]
{
Institut f\"ur Mathematik, Technische Universit\"at Berlin, Germany
}
\email{stump@math.tu-berlin.de}

\thanks{J. Olarte, F. Santos and J. Spreer were supported by the Einstein Foundation, Berlin. C.~Stump was supported by the DFG grants STU 563/2 ``Coxeter-Catalan combinatorics'' and STU 563/4-1 ``Noncrossing phenomena in Algebra and Geometry''. F. Santos is also supported by grant MTM2014-54207-P of the Spanish  Ministry of Science,
and, while he was in residence at the Mathematical Sciences Research Institute in Berkeley, California during the Fall 2017 semester, by the Clay Institute and the National Science Foundation (Grant No. DMS-1440140)}

\keywords{%
Erd\H{o}s-Ko-Rado property,
EKR,
flag (pseudo-)manifolds, 
cluster complexes,
simplicial complexes}

%%%%%%%%%%%%%%%%%%%%%%%%%%%%%%%%%%%%%%%%%%%%%%%%%%%%%%%%%%%
\begin{document}

\begin{abstract}
  We prove that the family of facets of a pure simplicial complex $\C$ of dimension up to three satisfies the Erd\H{o}s-Ko-Rado property whenever $\C$ is  flag and has no boundary ridges. We conjecture the same to be true in arbitrary dimension and give evidence for this conjecture.
  Our motivation is that complexes with these two properties include flag pseudo-manifolds and cluster complexes.
\end{abstract}

\maketitle

%%%%%%%%%%%%%%%%%%%%%%%%%%%%%%%%%%%%%%%%%%%%%%%%%%%%%%%%%
\section{Introduction and main results}
%%%%%%%%%%%%%%%%%%%%%%%%%%%%%%%%%%%%%%%%%%%%%%%%%%%%%%%%%

In this paper we investigate the Erd\H{o}s-Ko-Rado property (EKR property in what follows) for the set of facets of a pure simplicial complex. Our starting point  is the following conjecture due to Kalai.

\begin{conjecture}[{Kalai~\cite{Kalai15MathOverflow}}]
\label{conj:triangulations}
Let~$\family$ be a family of triangulations of the $n$-gon such that every two members of~$\family$ share a common diagonal. Then~$\family$ has at most as many elements as there are triangulations of the $(n-1)$-gon.
\end{conjecture}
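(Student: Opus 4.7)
The plan is to recast Kalai's conjecture as an EKR statement for a single simplicial complex and deduce it from a general principle for flag pseudomanifolds. Let $\Delta_n$ be the simplicial complex with vertices the diagonals of the convex $n$-gon and faces the sets of pairwise non-crossing diagonals, so that its facets are exactly the triangulations of the $n$-gon. Two triangulations share a common diagonal if and only if, viewed as facets of $\Delta_n$, they intersect in a nonempty face; hence \Cref{conj:triangulations} is precisely the EKR property for the family of facets of $\Delta_n$. A standard Catalan computation shows that the largest star in $\Delta_n$ is the star of an ``ear'' diagonal (one cutting off a single triangle), which has size $C_{n-3}$ and matches the number of triangulations of the $(n-1)$-gon.

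Next I would verify the structural hypotheses. The complex $\Delta_n$ is \emph{flag} because any two non-crossing diagonals extend to a common triangulation, so the minimal non-faces are exactly the pairs of crossing diagonals. It is \emph{pure} of dimension $n-4$, since every triangulation uses $n-3$ diagonals. It has \emph{no boundary ridges}: a ridge is a set of $n-4$ pairwise non-crossing diagonals, which partitions the polygon into triangles and one internal quadrilateral whose two diagonals yield exactly the two triangulations containing the ridge (a flip). Therefore $\Delta_n$ is a flag pure simplicial complex without boundary ridges, and \Cref{conj:triangulations} would follow at once from the EKR property for this entire class.

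To establish the general property, I would proceed by induction on $n$, exploiting the recursive structure that the link of a diagonal is a join of two smaller $\Delta_k$'s, again flag, pure, and without boundary ridges. Given an intersecting family $\family$ of triangulations, I would split into two cases. If $\family$ admits a common diagonal $d$, then $|\family|$ is bounded by the size of the star of $d$, and maximizing over $d$ gives the bound $C_{n-3}$. If $\family$ has no common diagonal, I would pick a diagonal $d$ used by as many members of $\family$ as possible, bound the facets through $d$ via the inductive hypothesis applied to $\link(d)$, and bound the remaining facets by using the no-boundary-ridges condition to pair each such facet with a flip neighbour inside the star of $d$.

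The main obstacle is precisely this second case, the absence of a common diagonal, which is already delicate at dimension three and appears to grow substantially harder with $n$. Classical shifting or compression techniques do not respect both the flag and the pseudomanifold structures simultaneously, and a plain double-counting loses the sharp Catalan constant. A uniform proof in all dimensions seems to require a genuinely new combinatorial input tailored to flag pseudomanifolds, perhaps exploiting the sphere structure of $\Delta_n$ or a matching argument between non-star intersecting families and subfamilies of a reference star.
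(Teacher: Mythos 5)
Your reformulation is exactly the one the paper adopts: Kalai's conjecture is equivalent to the pure-EKR property for the simplicial associahedron $\Delta_n$, which you correctly verify to be flag, pure of dimension $n-4$, and without boundary ridges (each ridge admits exactly two flips). The Catalan computation $C_{n-3}$ for the size of an ear's star and the observation that ears maximize the star size among diagonals are also correct. So the setup and structural hypotheses are right, and you have faithfully embedded the problem into the paper's framework of \Cref{conj:no_free_ridges}.

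However, you do not prove the statement, and you say so yourself: the case of an intersecting family with no common diagonal is precisely the crux, and your inductive sketch (bound facets through a most-popular diagonal $d$ via induction on $\link(d)$, then handle the rest via flips into the star of $d$) does not close. Neither does the paper close it: \Cref{conj:triangulations} remains open there, and what the paper actually proves is \Cref{thm:main}, namely that the EKR property holds for flag pure complexes without boundary of dimension at most three, which for $\Delta_n$ covers only $n\le 7$. The machinery the paper deploys for those low dimensions is genuinely different from your sketch: instead of inducting on $n$ via links of a diagonal, the paper fixes an intersecting family $\family$, chooses a minimal \emph{base face} $\face$ meeting every member of $\family$, and shrinks $|\face|$ while not decreasing $|\family|$. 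The main engine is \Cref{lemma_vertex_flip}, which uses the \emph{missing edge exchange property} (implied by flag + no boundary) to swap out facets hitting a chosen essential vertex $v$ of a \emph{dual pair} $(\U_a,\U_b)$ supporting $\family$ at an edge $ab\subseteq\face$, reducing the number of essential vertices. This handles base faces of size two in all dimensions (\Cref{thm:main2}), but base faces of size three and four require the classification of dual pairs in flag $1$-complexes (\Cref{lem:1dflag}) and a careful case analysis in the outerlink with its sectors, which is why the argument currently stops at dimension three. Your proposal reaches the correct starting point but supplies neither the dual-pair machinery nor any replacement for it, so the gap is exactly the one you flag in your final paragraph, and it is the same gap the paper leaves open beyond dimension three.
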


There is an easy way of constructing such a family~$\family$ of size \emph{exactly} the number of triangulations of the $(n-1)$-gon: choose an ear (a diagonal between two vertices at distance two) and let~$\family$ be the set of all triangulations containing it. The conjecture is that no intersecting family is larger than that.

The general EKR problem asks the following: given a family $\K$ of subsets of a ground set $\vertices$, is it true that the maximum size of an \defn{intersecting subfamily} (a subfamily $\family\subset \K$ in which no two members are disjoint) is attained by the subfamily $\family_i\subset \K$ of sets containing an element $i$, for some $i \in \vertices$? If that is the case $\K$ is said to have the \defn{EKR property}. The EKR property is \defn{strict} if every  intersecting subfamily of maximum size is of the form $\family_i$.

The original Erd\H{o}s-Ko-Rado Theorem is that the family $\K(n,r)$ of all subsets of size $r$ of a set with $n$ elements has the EKR property whenever $r\le n/2$, and the strict EKR property if $r < n/2$ (see Erd\H{o}s-Ko-Rado~\cite{EKR1961}, Hilton-Milner~\cite{HM1967}). 

\Cref{conj:triangulations} asks for the EKR property for the family of (sets of diagonals that form) triangulations of an $n$-gon.
Recall that triangulations of an $n$-gon are the facets of the \defn{simplicial associahedron}. By simplicial associahedron we mean the (flag) simplicial complex of non-crossing diagonals of an $n$-gon, that is, the independence complex of the graph whose vertices correspond to the $n(n-3)/2$ diagonals of the $n$-gon, with two diagonals forming an edge when their interiors are disjoint. It is a topological sphere of dimension $n-4$ and combinatorially dual to the usual associahedron, which is a simple $(n-3)$-polytope.
This suggests to generalize \Cref{conj:triangulations} by looking at more general flag pure simplicial complexes. 

\medskip

A simplicial complex on~$n$ vertices is a family $\C$ of subsets of~$[n]$ that is closed under taking subsets. Elements of $\C$ are called \defn{simplices}. Maximal simplices are called \defn{facets} and simplices of cardinality two are  \defn{edges}.
A complex is \defn{pure} of dimension $r-1$ (or, an \defn{$(r-1)$-complex}, for short) if all facets have size $r$ and \defn{flag} if every set of vertices that pairwise span an edge form a face. 

When the set of facets of a pure simplicial complex~$\C$ has the EKR property we call the complex \defn{pure-EKR}. Equivalently, $\C$ is pure-EKR if the maximum size of intersecting families of facets is attained at the star of some vertex; recall that the \defn{star} of a vertex $v$ is the set of faces containing $v$.
In this language the original Erd\H{o}s-Ko-Rado Theorem states that the complete complex of dimension $r-1$ with $n$ vertices is pure-EKR for $2r=n$ and strict pure-EKR for $2r<n$, and \Cref{conj:triangulations} states that the simplicial associahedron is pure-EKR.

It is easy to construct pure simplicial complexes that are not pure-EKR (see Example~\ref{exm:elementary}) but we believe that flagness plus the following relatively weak property are enough to exclude such examples:
We say that a pure simplicial complex is \defn{without boundary} if every non-maximal simplex is contained in at least two facets. 

\begin{conjecture}
\label{conj:no_free_ridges}
  Every pure, flag simplicial complex without boundary is pure-EKR.
\end{conjecture}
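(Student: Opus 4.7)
The plan is to proceed by induction on the dimension $r-1$ of $\C$. The base cases $r\in\{1,2\}$ are straightforward: in dimension $0$ a pure flag complex is just a set of isolated vertices, and an intersecting family of singletons must share an element and therefore has size one; in dimension $1$, the no-boundary hypothesis gives minimum degree at least two and flagness forces the $1$-skeleton to be triangle-free, so every intersecting family of edges is a star and the bound is the maximum degree.

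For the inductive step with $\C$ of dimension $r-1$, the first step is to observe that for every vertex $v$, the link $\link_\C(v)$ is again pure, flag, and boundary-free: purity and flagness are immediate, and boundary-freeness follows because any non-maximal face $\smallface \in \link_\C(v)$ yields a non-maximal face $\smallface \cup \{v\} \in \C$, which lies in at least two facets of $\C$, each of which contains $v$ and thus contributes a distinct facet of $\link_\C(v)$ containing $\smallface$. By the inductive hypothesis, every link is pure-EKR. Given an intersecting family $\family$ of facets, I would then choose a vertex $v$ maximizing $|\family_v| := |\{F \in \family : v \in F\}|$, so that the task reduces to constructing an injection
\[
  \Phi \colon \family_{\bar v} \hookrightarrow \st_\C(v) \setminus \family_v,
\]
where $\family_{\bar v} = \family \setminus \family_v$.

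The construction of $\Phi(F)$ for $F \in \family_{\bar v}$ would exploit the fact that $F$ intersects every facet in $\family_v$: any $G \in \family_v$ produces a vertex $w \in F \cap G$ adjacent to $v$ in the $1$-skeleton, providing a handle for a local move. The no-boundary hypothesis at a codimension-one face of $F$ then supplies an alternative facet, and one iteratively performs such flips—each guided by flagness to stay inside $\st_\C(v)$ after finitely many steps—to arrive at a facet containing $v$. The image should land outside $\family_v$ because otherwise one could chain further flips to produce a facet $G' \notin \family$ attached to a vertex $v'$ with $|\family_{v'}| > |\family_v|$, contradicting the choice of $v$. Pure-EKR of the link $\link_\C(v)$ would enter to control the facets of $\st_\C(v)$ through which the image of $\Phi$ is routed.

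The main obstacle is ensuring that the sequence of flips defining $\Phi$ can be chosen canonically and injectively; in arbitrary dimension a single ridge can lie in arbitrarily many facets, so the flip graph is not a tree and different choices of flip sequences may collide. A secondary difficulty is verifying that the image consistently avoids $\family$, which requires a delicate comparison of $|\family_{v'}|$ across vertices $v'$ encountered along the flips. For dimensions up to three the number of local configurations is small enough that this analysis can be carried out by direct case-work (presumably the content of the paper's main theorem); pushing the argument to higher dimension seems to require either a new structural invariant tailored to flag boundary-free complexes, or a completely different route such as an eigenvalue bound on the Kneser-type graph whose vertices are facets of $\C$ and whose edges are disjoint pairs.
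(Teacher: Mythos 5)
This statement is Conjecture~\ref{conj:no_free_ridges}, which the paper leaves open in general; the paper proves it only in dimensions at most three (Theorem~\ref{thm:main}), and partial evidence in arbitrary dimension is Theorem~\ref{thm:main2} (an intersecting family confined to a union of two stars is no larger than the bigger star). Your write-up is, by your own admission, an incomplete sketch rather than a proof, and the step that is missing is exactly the one that would settle the conjecture: the injection $\Phi\colon \family_{\bar v}\hookrightarrow \st_\C(v)\setminus\family_v$ is never constructed, and no precise flip rule, invariant, or well-ordering is given that would make the chosen sequence of ridge-flips canonical, terminating, and injective. The sentence claiming that the image of $\Phi$ must avoid $\family_v$ ``because otherwise one could chain further flips to a vertex $v'$ with $|\family_{v'}|>|\family_v|$'' is unsupported: nothing in the preceding setup controls how $|\family_{v'}|$ varies along a flip path, and one can easily imagine flips that leave all the counts $|\family_{v'}|$ unchanged while shuttling facets within $\st_\C(v)$. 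Similarly, the role assigned to pure-EKRness of $\link_\C(v)$ (``to control the facets of $\st_\C(v)$ through which the image of $\Phi$ is routed'') is never made precise.

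A few remarks comparing your plan to what the paper does in the cases it handles. Your observation that $\link_\C(v)$ is again pure, flag, and without boundary is correct, but the paper does not run an induction on dimension via links at all. Instead, the paper's organizing object is a \emph{base face} $\face$ of $\family$ (a face meeting every member of $\family$) and the induction is on $|\face|$: Theorem~\ref{theorem_2_to_1_meep} reduces a base face of size two to size one in arbitrary dimension, and the $3$-dimensional Theorem~\ref{theorem:3dim} performs the reductions from sizes four and three using the machinery of sectors of the outerlink and the classification of dual pairs in flag $1$-complexes (Proposition~\ref{lem:1dflag}). The technical engine behind those reductions is the \emph{missing edge exchange property} (Definition~\ref{def:meep}, verified for flag boundary-free complexes in Proposition~\ref{prop:meep}) together with Lemma~\ref{lemma_vertex_flip}, which executes a flip in a controlled way and bookkeeps the effect through the \emph{essential vertices} of a dual pair. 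This is the rigorous version of the heuristic ``flip and stay inside the star'' step that your plan gestures at. In short: you share the core flip idea with the paper, but the paper replaces your two unproved claims (canonicity/injectivity of $\Phi$ and the ``image avoids $\family_v$'' argument) with the dual-pair formalism and the reduction of essential vertices, and even with that formalism the argument currently goes through only when the base face has size at most four, i.e.\ in dimension at most three. Your link-induction observation is a genuinely different idea and could in principle be combined with the base-face machinery, but as written it does not close the gap.
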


This conjecture can be reformulated in graph-theoretical language. For this note that 
flag simplicial complexes are exactly the \defn{independence complexes} of graphs or, equivalently, \defn{clique complexes}.
A graph is called \defn{well-covered} if its independence complex is pure.
Thus,
\Cref{conj:no_free_ridges} is equivalent to the following statement:

\smallskip

\emph{``If G is a well-covered graph such that every non-maximal independent set is contained in at least two maximal independent sets, then the family of maximal independent sets of G has the EKR property.''}

\smallskip

In this article we use the language of simplicial complexes which seems to be more convenient for our purposes; see \Cref{ssec:relations} for several more graph-theoretical articles in the literature.

\medskip

\Cref{conj:no_free_ridges} includes the following two interesting special cases, both generalizing \Cref{conj:triangulations}.

The first is  flag (pseudo)-manifolds. By a \defn{simplicial manifold} we mean a simplicial complex whose underlying set is homeomorphic to a closed manifold. By a \defn{pseudo-manifold} we mean a pure simplicial complex $\C$, such that every \defn{ridge} of~$\C$ (i.e., a codimension~$1$ face of~$\C$) is contained in \emph{exactly} two facets of $\C$ (this is sometimes also called a \defn{weak pseudo-manifold}, and an additional connectivity condition is imposed on pseudo-manifolds). 

\begin{conjecture}
\label{conj:manifolds}
  Flag simplicial manifolds are pure-EKR.
\end{conjecture}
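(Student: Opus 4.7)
The plan is to argue that \Cref{conj:manifolds} is a direct consequence of \Cref{conj:no_free_ridges}, so the task reduces to verifying that a flag simplicial manifold automatically satisfies the hypotheses of the latter. Purity and flagness are given by assumption (indeed, purity is built into the definition of simplicial manifold), so the only condition requiring verification is that every non-maximal simplex is contained in at least two facets.

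First I would invoke the standard fact that in a simplicial $d$-manifold (without boundary) $\C$, the link $\link_\C(\sigma)$ of any simplex $\sigma$ of dimension $k < d$ is a triangulated $(d-k-1)$-sphere. The facets of $\link_\C(\sigma)$ are in bijection with the facets of $\C$ containing $\sigma$: a facet $F$ of $\C$ with $\sigma \subseteq F$ corresponds to the facet $F \setminus \sigma$ of the link. Since every triangulated sphere of dimension at least zero has at least two facets (the $0$-sphere consists of two points; higher-dimensional spheres have even more), this yields the desired ``no boundary'' property, and \Cref{conj:no_free_ridges} then applies. In particular, the main result of this paper, which establishes \Cref{conj:no_free_ridges} in dimensions up to three, immediately gives \Cref{conj:manifolds} in the same range.

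The main obstacle is that the general case of \Cref{conj:no_free_ridges} itself remains open, and this reduction offers no direct way around that barrier. A more ambitious strategy would try to exploit the additional rigidity of the manifold hypothesis---links are spheres rather than arbitrary flag complexes without boundary, which is a substantially stronger local condition (and, since links of flag complexes are flag, one even has that all links are \emph{flag} spheres, which are quite constrained combinatorially)---but I do not see how to leverage this extra structure within the intersecting-family framework without first finding a new approach to the general conjecture.
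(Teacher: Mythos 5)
Since \Cref{conj:manifolds} is a conjecture and the paper does not prove it, the relevant content to compare against is the paper's implication diagram, which derives \Cref{conj:manifolds} from \Cref{conj:no_free_ridges} via the intermediate \Cref{conj:pseudo-manifolds}: a flag simplicial manifold is a flag pseudo-manifold (every ridge lies in exactly two facets), and a pseudo-manifold is without boundary. Your proposal takes essentially this same route---reduce to \Cref{conj:no_free_ridges} by verifying the ``without boundary'' hypothesis, and observe that \Cref{thm:main} settles the case of dimension at most three---so the structure of the argument matches the paper's.

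One small inaccuracy in your justification is worth flagging. You assert that the link of a $k$-face in a simplicial $d$-manifold is a triangulated $(d-k-1)$-sphere. With the paper's definition (a simplicial complex whose underlying space is homeomorphic to a closed manifold), this is false in general: by the Double Suspension Theorem, a non-PL triangulation of a sphere can have links that are homology spheres but not actual spheres (e.g., an edge whose link is a Poincar\'e homology $3$-sphere). For your argument this does not matter, because a homology $(d-k-1)$-sphere with $d-k-1\ge 0$ still has at least two facets. But the cleanest and most elementary way to establish the ``without boundary'' hypothesis is exactly the paper's route: the link of a ridge in any triangulated closed manifold is a homology $0$-sphere, hence exactly two points, so $\C$ is a pseudo-manifold; and then any non-maximal face extends to a ridge inside any facet containing it, which yields a second facet. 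This avoids invoking anything about links in codimension greater than one.
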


Naturally, every simplicial manifold is a (weak) pseudo-manifold and thus the following statement generalizes \Cref{conj:manifolds}.

\begin{conjecture}
\label{conj:pseudo-manifolds}
  Flag pseudo-manifolds are pure-EKR.
\end{conjecture}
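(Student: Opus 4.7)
The plan is an induction on the dimension $r-1$ of $\C$, using vertex links as the reduction device. As already observed, the link $\link(v)$ of any vertex $v$ in a flag pseudo-manifold is itself a flag pseudo-manifold of dimension $r-2$, so the inductive hypothesis applies in every link. The base case $r=1$ is immediate, since a $0$-dimensional flag pseudo-manifold consists of exactly two isolated vertices and the maximum intersecting family of facets has size $1=|\st(v)|$.

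Let $\family$ be a maximum intersecting family of facets of $\C$, and let $\kappa=\bigcap_{F\in\family} F$ be its kernel. If $\kappa\neq\emptyset$, choose any $v\in\kappa$; then $\family\subseteq\st(v)$ and we are done. So assume $\kappa=\emptyset$, and the task reduces to exhibiting a vertex $v^{*}$ with $|\st(v^{*})|\geq|\family|$. Fix a vertex $v$ lying in at least one member of $\family$, partition $\family=\family_{v}\sqcup\family_{\bar v}$ according to whether $v\in F$, and attempt to build an injection $\phi\colon\family_{\bar v}\hookrightarrow\st(v)\setminus\family_{v}$, which yields $|\family|\leq|\st(v)|$.

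For the construction of $\phi$, the pseudo-manifold hypothesis is the main tool. Given $F\in\family_{\bar v}$, the intersecting property lets us select a facet $G\in\family_{v}$ with $F\cap G\neq\emptyset$ and a vertex $w\in F\cap G$. The edge $\{v,w\}$ lies in $\C$ (since both endpoints lie in $G$), and flagness combined with the pseudo-manifold structure produces a canonical sequence of ridge-flips around $v$ starting at $G$ and moving toward $F$. The hope is that this walk lands, in a controlled way, at some facet $\phi(F)\in\st(v)\setminus\family_{v}$ depending injectively on $F$. Injectivity and range-checking would then be delegated to the inductive hypothesis on $\link(v)$, after identifying the relevant images with facets of $\link(v)$ meeting prescribed vertices.

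The main obstacle, and the reason the statement is only conjectural in general, is that such a flip construction is not obviously injective: different $F\in\family_{\bar v}$ may select the same witness $G$ and the same flipping path, colliding on a common image. Overcoming this seems to require either a global compression-style argument that remains inside the class of flag pseudo-manifolds (ordinary algebraic shifting destroys flagness), or a finer coupling between $\family_{v}$ and $\family_{\bar v}$ through higher codimension links. In dimension up to three the link $\link(v)$ is a $1$- or $2$-dimensional flag pseudo-manifold, i.e., a disjoint union of cycles of length at least four or a flag triangulated surface, whose rigid local structure can presumably be exploited by a direct case analysis; this is the route I would expect the paper to follow for its main theorem, and extending the argument past dimension three is the substantive open problem encoded in \Cref{conj:pseudo-manifolds}.
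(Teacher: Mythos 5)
This statement is labelled as a \emph{conjecture} in the paper, and the paper offers no proof of it; what the paper proves (\Cref{thm:main}) is only the case of dimension at most three, and it does so for the strictly larger class of flag pure complexes without boundary. Your text is candid that it is a strategy sketch rather than a proof, and it correctly identifies that the statement is open, so there is no pretense of closing a gap that you leave explicit.

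That said, the route you expect the paper to follow is not the one the paper actually takes, and the gap in your sketch is deeper than the injectivity issue you flag. The paper does not induct on dimension through vertex links. Its engine is the notion of a \emph{base face} (a face meeting every member of $\family$) together with the machinery of dual pairs in $\lkcap{\C}{a}{b}$ and the \emph{missing edge exchange property}: one shows that any intersecting family with a base face of size $k>1$ can be traded for a family of at least equal size with a strictly smaller base face, until the base face is a single vertex. The reduction from size~$2$ to size~$1$ (\Cref{theorem_2_to_1_meep}) works in every dimension; the reductions from size~$3$ to~$2$ and from~$4$ to~$3$ (\Cref{lemma:3to2,lemma:4to3}) use a classification of dual pairs in flag $1$-complexes and a hands-on analysis of sectors of the outerlink, and it is these steps that are confined to dimension three. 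Nothing in the paper involves running the pure-EKR hypothesis in $\link(v)$.

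As to your sketch itself: the intended use of the inductive hypothesis on $\link(v)$ does not go through even in principle, because the cross-intersection structure you need is not an intersecting family \emph{inside} $\link(v)$. Two facets $F,G\in\family_v$ need only meet at $v$, so their restrictions $F\setminus v$ and $G\setminus v$ may well be disjoint in $\link(v)$; conversely the facets in $\family_{\bar v}$ do not live in $\st(v)$ at all, so there is no family of facets of $\link(v)$ to which the EKR property of the link applies. This is precisely the difficulty the paper sidesteps by working with dual pairs $(\U_a,\U_b)$ in $\lkcap{\C}{a}{b}$ rather than with a single link, and by trading facets across the two sides of the pair via the missing edge exchange property (\Cref{lemma_vertex_flip}). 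The self-acknowledged injectivity obstruction in your flip map is real, but it is a symptom of this more basic mismatch between what the link's EKR property gives you and what the reduction actually needs.
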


A second line of conjectures between  \Cref{conj:triangulations} and \Cref{conj:no_free_ridges} is in the context of Coxeter combinatorics.
Suppose that instead of triangulations we look at subdivisions of a polygon into subpolygons of the same size $m+2\ge 3$. This can only be done if the number of vertices is of the form $mn+2$.
The sets of diagonals producing these $(m+2)$-angulations form a pure simplicial complex of dimension $n-2$, since any $(m+2)$-angulation has $n-1$ diagonals.
More generally, for any finite root system $\Phi$ of rank $n-1$ and a positive integer $m$, Fomin and Reading~\cite{FR2005}, generalizing previous work of Fomin and Zelevinsky~\cite{FZ2003}, introduced the $m$-th \emph{generalized cluster complex} of $\Phi$ as a certain flag pure complex $\C(m,\Phi)$ of dimension $n-2$. They then showed that for root systems of type~$A_{n-1}$, the complex $\C(m,A_{n-1})$ equals the complex of $(m+2)$-angulations of an $(mn+2)$-gon.
Athanasiadis and Tzanaki~\cite{AT2008} proved that the complexes $\C(m,\Phi)$ are vertex-decomposable, hence homotopically equivalent to wedges of $(n-2)$-spheres, and gave a formula for the number of spheres in $\C(m,\Phi)$.

\medskip

We have computationally verified that for every $n,m$ with $n+m \leq 8$ and for every irreducible root system of rank~$n-1$, the complex $\C(m,\Phi)$ is pure-EKR, which leads to the following conjectures.

\begin{conjecture}
\label{conj:m_angulations}
  The complex $\C(m,A_{n-1})$ whose facets are the $(m+2)$-angulations of a convex $(mn+2)$-gon is pure-EKR.
\end{conjecture}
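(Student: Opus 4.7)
My plan would be to first establish that $\C(m, A_{n-1})$ satisfies the hypotheses of Conjecture~\ref{conj:no_free_ridges}, so that Conjecture~\ref{conj:m_angulations} becomes a special case of it, and then attempt to prove it directly via a combination of the structural properties of $m$-cluster complexes and an induction or cycle-style argument.

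First I would verify the hypotheses. The complex $\C(m, A_{n-1})$ is flag because compatibility of allowed diagonals reduces to pairwise non-crossing, realising $\C(m, A_{n-1})$ as a clique complex. Purity is the classical fact that every $(m+2)$-angulation of an $(mn+2)$-gon uses exactly $n-1$ diagonals. The ``without boundary'' condition holds because removing any diagonal from an $(m+2)$-angulation merges two $(m+2)$-gons into a single $(2m+2)$-gon, which admits exactly $m+1 \geq 2$ re-dissections into two $(m+2)$-gons; hence every ridge of $\C(m, A_{n-1})$ is contained in $m+1$ facets. It follows that Conjecture~\ref{conj:m_angulations} is a consequence of Conjecture~\ref{conj:no_free_ridges}, and the main theorem of this paper (covering dimension up to three, i.e.\ $n \leq 5$) already handles the smallest cases.

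For $n \geq 6$ I would attempt induction on $n$. Fix an ear-diagonal $d$ joining two vertices of the $(mn+2)$-gon at combinatorial distance $m+1$, thereby cutting off a single $(m+2)$-gon. The star $\st(d)$ is the natural candidate for a maximum intersecting family, and via deletion of the ear it is in bijection with the set of $(m+2)$-angulations of an $(m(n-1)+2)$-gon. Given any intersecting family $\family$, one would partition it as $\family = \family_d \sqcup \family_{\bar d}$ according to whether $d$ appears, bound $|\family_d|$ trivially by $|\st(d)|$, and then try to construct an injection from $\family_{\bar d}$ into $\st(d) \setminus \family_d$ by ``rotating'' facets of $\family_{\bar d}$ using the dihedral $\mathbb{Z}/(mn+2)$-action on the polygon. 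Alternative strategies would be a Katona-style cycle argument exploiting the same action, or a peeling procedure along the Athanasiadis--Tzanaki~\cite{AT2008} vertex-decomposition.

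The main obstacle is that the case $m=1$ is precisely Kalai's Conjecture~\ref{conj:triangulations}, which has resisted all direct attempts. Any complete proof of Conjecture~\ref{conj:m_angulations} must therefore resolve it, so genuinely new input is needed: perhaps one exploiting the cluster-algebraic or Coxeter-theoretic structure of $\C(m, A_{n-1})$ beyond what is visible in the triangulation model. For $m \geq 2$ one potential advantage is that each ridge is contained in $m+1 \geq 3$ facets rather than just two, giving extra local ``thickness'' that might be leveraged by an averaging or stability argument, although it remains unclear how to extract an EKR-type bound uniformly in $m$ without also handling the boundary case $m=1$.
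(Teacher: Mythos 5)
The statement you were asked to prove is stated as a \emph{conjecture} in the paper, not a theorem, so there is no proof to compare against. What the paper actually provides is (i) the observation, recorded in the implication diagram, that \Cref{conj:m_angulations} follows from \Cref{conj:cluster}, which in turn follows from \Cref{conj:no_free_ridges}; (ii) computational verification for all $n+m\le 8$; and (iii) the low-dimensional cases $n\le 5$ via \Cref{thm:main}, restated as a corollary in the introduction.

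Your write-up matches that scope precisely, and indeed fills in a detail the paper leaves implicit: you verify flagness, purity, and absence of boundary for $\C(m,A_{n-1})$ directly (the $m+1$ ways to re-cut the merged $(2m+2)$-gon is the clean way to see that every ridge lies in at least two facets, which suffices since a non-maximal face always sits inside some ridge of a containing facet). You also correctly identify the covered range $n\le 5$ from \Cref{thm:main}. The additional strategies you sketch for $n\ge 6$ --- injection into $\st(d)$, a Katona-style cycle argument using the dihedral action, and peeling along the Athanasiadis--Tzanaki vertex decomposition --- are not pursued in the paper, and you are right to flag that none of them is known to close the gap: in particular $m=1$ is exactly \Cref{conj:triangulations}, which remains open. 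So this is an honest and accurate assessment rather than a proof; it agrees with the paper on what is established, and the speculative part is correctly labeled as such.
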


\begin{conjecture}
\label{conj:cluster}
  Every generalized cluster complex $\C(m, \Phi)$ is pure-EKR.
\end{conjecture}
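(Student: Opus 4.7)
The natural strategy is to deduce \Cref{conj:cluster} from \Cref{conj:no_free_ridges}. By the construction of Fomin-Reading \cite{FR2005}, every generalized cluster complex $\C(m,\Phi)$ is flag and pure of dimension $n-2$, so what remains is to verify the absence of boundary ridges. For type $A_{n-1}$, this is transparent from the model of $(m+2)$-angulations of the $(mn+2)$-gon: a codimension-$1$ face is obtained by removing one diagonal, leaving an unrefined $(2m+2)$-sub-polygon, which admits exactly $m+1 \geq 2$ refinements into two $(m+2)$-gons (the short diagonals joining vertices at cyclic distance $m+1$). For general $\Phi$, the analogous flip structure of Fomin-Reading shows that every codimension-$1$ face of $\C(m,\Phi)$ is contained in at least two facets, specializing at $m=1$ to the fact that the cluster complex is a simplicial sphere \cite{FZ2003}.

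With these hypotheses in place, \Cref{conj:cluster} follows from \Cref{conj:no_free_ridges}. The principal obstacle is that \Cref{conj:no_free_ridges} is itself open beyond dimension $3$; combining with the main theorem of this paper yields an unconditional proof of \Cref{conj:cluster} only in very low ranks (essentially those covered by the authors' computational verification). For higher ranks, a direct attack seems necessary.

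A promising such attack is to mimic, in a type-uniform fashion, the ``ear'' construction underlying \Cref{conj:triangulations}. The plan is to identify a distinguished vertex $v \in \C(m,\Phi)$ whose star realizes the conjectured EKR bound, and to argue by induction on $n$ using the recursive structure of links in cluster complexes — in type $A$, for instance, the link of a diagonal is the simplicial join of two smaller associahedra, so one would also need a ``join preserves EKR'' lemma. One would then need a compression or shifting operation that sends an arbitrary intersecting family $\family \subset \C(m,\Phi)$ into the star of $v$ without reducing $|\family|$. The main technical hurdle is to construct such an operation while respecting the compatibility relation defining $\C(m,\Phi)$ — naive swap moves tend to destroy faces because compatibility is a global Coxeter-combinatorial condition rather than a local one — and to combine it with the vertex-decomposability of $\C(m,\Phi)$ established by Athanasiadis-Tzanaki \cite{AT2008}.
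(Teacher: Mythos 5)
The statement you are addressing is a \emph{conjecture} in the paper, not a theorem; the paper does not prove it, so there is no proof of record to compare against. Your reduction to \Cref{conj:no_free_ridges} --- after verifying that $\C(m,\Phi)$ is flag, pure of dimension $n-2$, and without boundary ridges --- is precisely the implication the paper records in its diagram of conjectures, and your type-$A$ argument for the no-boundary hypothesis (removing a diagonal from an $(m+2)$-angulation merges two cells into a $(2m+2)$-gon, which can be re-split in $m+1\ge 2$ ways) is correct. You rightly observe that this reduction is conditional, since \Cref{conj:no_free_ridges} is open beyond dimension three, and that combining it with the paper's \Cref{thm:main} yields an unconditional result only for rank at most four --- which is exactly the corollary the paper states. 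Your closing sketch of a direct type-uniform compression argument, appealing to the join structure of links and to vertex-decomposability, is a reasonable programme but is not carried out either by you or by the paper; it should be presented as a research direction, not a proof step. In short, your analysis faithfully reproduces the paper's state of knowledge on this conjecture rather than advancing it.
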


Summing up, we have the following diagram of implications:

\begin{center}
  \begin{tikzpicture}[scale=1]
    \node (A) at (0,0)  {Conj.~\ref{conj:triangulations}};
    \node (B) at (3,1)  {Conj.~\ref{conj:manifolds}};
    \node (B2) at (6,1)  {Conj.~\ref{conj:pseudo-manifolds}};
    \node (C) at (3,-1) {Conj.~\ref{conj:m_angulations}};
    \node (D) at (6,-1) {Conj.~\ref{conj:cluster}};
    \node (E) at (9,0)  {Conj.~\ref{conj:no_free_ridges}};

    \draw[double, double equal sign distance, implies-] (A.north east) -- (B.west);
    \draw[double, double equal sign distance, implies-] (A.south east) -- (C.west);
    \draw[double, double equal sign distance, implies-] (B.east) -- (B2.west);
    \draw[double, double equal sign distance, implies-] (C.east) -- (D.west);
    \draw[double, double equal sign distance, implies-] (D.east) -- (E.south west);
    \draw[double, double equal sign distance, implies-] (B2.east) -- (E.north west);
  \end{tikzpicture}
\end{center}

Our two main results are evidence for \Cref{conj:no_free_ridges}. On the one hand, we prove the conjecture in dimensions up to three. 

\begin{theorem}
\label{thm:main}
  Pure, flag simplicial complexes without boundary of dimension at most three are pure-EKR.
\end{theorem}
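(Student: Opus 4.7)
The plan is to treat each dimension separately. The cases of dimensions~$0$ and~$1$ are essentially trivial base cases: for dimension~$1$, flagness forces the underlying graph to be triangle-free while the no-boundary condition gives minimum degree at least two, and in any triangle-free graph a family of pairwise intersecting edges necessarily has a common endpoint, so the maximum such family is exactly a maximum star.

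For dimension~$2$, a pure flag complex without boundary is the triangle complex of a $K_4$-free graph in which every edge is contained in at least two triangles. Given an intersecting family $\F$ of triangles, the case where $\F$ has a common vertex is immediate. Otherwise I would pick $F_1 = \{v,a,b\}$ and $F_2 = \{v,c,d\}$ in $\F$ with $F_1 \cap F_2 = \{v\}$; any $T \in \F$ with $v \notin T$ must then contain one vertex of $\{a,b\}$ and one of $\{c,d\}$. Flagness forces auxiliary triangles through $v$ (the pair of vertices that $T$ takes from $F_1$, $F_2$ spans an edge and hence, with $v$, a triangle), while the $K_4$-free condition prevents the third vertex of $T$ from being adjacent to $v$. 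These constraints, combined with no-boundary to provide the necessary triangles in the star of $v$, can be packaged into an injection $\F \setminus \st(v) \hookrightarrow \st(v) \setminus \F$, yielding $|\F| \le d(v)$.

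The dimension~$3$ case is the heart of the theorem. Facets are $4$-cliques in a $K_5$-free graph in which every triangle lies in at least two tetrahedra. Assuming again that $\F$ has no common vertex, I would pick $F_1, F_2 \in \F$ of minimum intersection $S := F_1 \cap F_2$ and split the analysis according to $|S| \in \{1,2,3\}$. The case $|S|=3$ is the most rigid: $S$ is a ridge, no-boundary forces further facets through $S$, and this tightly controls the facets of $\F$ outside the star of a well-chosen vertex of $S$. The cases $|S|=2$ and $|S|=1$ need more work: here I would use the $K_5$-free condition together with the no-boundary hypothesis to reduce the possible global configurations of $\F$ to a finite list of ``local'' structures and then verify EKR case by case. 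The main obstacle will be the case $|S|=1$, where the intersection constraints from $F_1$ and $F_2$ are weakest and many configurations survive a priori. The key challenge here is to leverage the no-boundary hypothesis aggressively enough to extend any assumed configuration into a controllable structure, while using flagness to rule out extensions that would produce forbidden cliques, and in the end locate a vertex whose star in $\C$ dominates $|\F|$.
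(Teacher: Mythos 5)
Your proposal is a proof \emph{plan}, not a proof, and the hardest part --- the dimension-three case, especially when two facets of $\F$ meet in a single vertex --- is precisely the part you leave unresolved (``the main obstacle,'' ``need more work''). The paper also does a size-based case analysis, but around a different parameter: instead of minimizing $|F_1\cap F_2|$ over pairs of facets in $\F$, it minimizes the size of a \emph{base face} $\sigma$, a face of $\C$ meeting every member of $\F$ (any facet of $\F$ is trivially one, so such a face always exists, of size at most $d+1$). The key machinery --- the missing edge exchange property (\Cref{prop:meep}), dual pairs of cross-intersecting upper sets in $\link_\C(ab)$, and their classification in flag $1$-complexes (\Cref{lem:1dflag}) --- lets the paper iteratively replace $\F$ by a family $\F'$ with $|\F'|\ge|\F|$ and a strictly smaller base face, terminating at a vertex star. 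Your ``minimum pairwise intersection'' decomposition is not the same invariant and does not obviously feed into such a reduction; in particular, your $|S|=1$ case roughly corresponds to the paper's base face of size four (Lemma~\ref{lemma:4to3}), which requires the outerlink/sector analysis of \Cref{ssec:sectors} and culminates in pinning down a unique extremal configuration of nine tetrahedra inside the clique complex of a complete $4$-partite graph. None of that is present, or replaced by anything concrete, in your sketch.

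There is already a concrete problem in your dimension-two step. The map you gesture at --- sending $T=\{a,c,z\}\in\F\setminus\st(v)$ to the auxiliary triangle $vac$ through $v$ --- is not injective: two triangles $\{a,c,z_1\}$ and $\{a,c,z_2\}$ in $\F$ would share the same target, and nothing you have stated rules this out. The paper sidesteps any injection. When the minimal base face is a triangle $abc$, minimality supplies facets $\tau_a,\tau_b,\tau_c$ meeting $abc$ in exactly one vertex each; flagness forces them into the rigid pattern $\tau_a=auv$, $\tau_b=bvw$, $\tau_c=cuw$, and one then shows directly that $\F\subseteq\{abc,\tau_a,\tau_b,\tau_c\}$, so $|\F|\le 4$, which every vertex star of a flag pure $2$-complex without boundary matches (the vertex link is a triangle-free graph of minimum degree two, hence has at least four edges). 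Your dimension-zero and one observations are fine, but to make the rest work you would need either a correctly specified injection or a structural argument of this kind for dimension two, and something far more substantial than what you have sketched for dimension three.
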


This has the following immediate consequences.

\begin{corollary}
  Flag pseudo-manifolds, in particular simplicial manifolds, of dimension at most three are pure-EKR.
\end{corollary}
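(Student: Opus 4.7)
The plan is to proceed dimension by dimension, the bulk of the work being in dimensions~$2$ and~$3$. Dimension~$0$ is trivial: a pure $0$-complex without boundary is a set of $\geq 2$ isolated vertices, every intersecting family of singletons has size~$1$, and $|\st(v)|=1$ for every~$v$. For dimension~$1$, flagness forces the $1$-skeleton to be triangle-free (a $3$-clique would by flagness be a face, contradicting purity), and every intersecting family of edges in a triangle-free graph is contained in some star, so the maximum is $\max_v \deg(v) = \max_v |\st(v)|$.

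For dimension~$2$, I would let $\family$ be intersecting and pick $A,B\in\family$ minimising $|A\cap B|\in\{1,2\}$. If $|A\cap B|=2$, flagness forbids $K_4$ in the $1$-skeleton (otherwise the $3$-simplex would be a face), and a short case analysis shows that any three pairwise edge-sharing triangles must share a common edge; iteration then forces $\family$ into the star of either vertex of $A\cap B$. If $|A\cap B|=\{w\}$, with $A=\{w,a_1,a_2\}$ and $B=\{w,b_1,b_2\}$, then every $C\in\family\setminus\st(w)$ must contain both a vertex of $\{a_1,a_2\}$ and a vertex of $\{b_1,b_2\}$, hence an edge between them. I would bound the number of such $C$ by counting facets through these cross edges and combine the resulting estimate with $|\family\cap\st(w)|$, using the no-boundary condition to guarantee that $\st(w)$ is large enough to absorb the slack and yield $|\family|\le\max_v|\st(v)|$.

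For dimension~$3$, the same ``minimise $|A\cap B|$ then analyse'' strategy applies, but with $|A\cap B|\in\{1,2,3\}$ and significantly more subcases. The key extra tool is that the link $\link_\C(v)$ of any vertex is again a pure, flag, boundaryless complex of dimension~$2$ (an easy verification from the definitions), so the dimension-$2$ result can be invoked for any subfamily of $\family$ contained in a star, after removing~$v$ from every facet. Flagness now forbids $K_5$ in the $1$-skeleton, which excludes the analogue of the tetrahedral configuration of four facets pairwise meeting in triangles. The main obstacle I anticipate is the case $|A\cap B|=1$: here the facets of $\family$ can avoid any fixed vertex and spread across several stars of neighbours of $A\cap B$, and bounding this ``cross'' part by a single star requires a careful combination of the link induction, flagness (to force further face incidences between the cross vertices), and the no-boundary hypothesis (to guarantee the target star is large enough to dominate $|\family|$).
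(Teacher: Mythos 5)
The paper derives this corollary in one line from \Cref{thm:main}: a (weak) pseudo-manifold is a pure complex in which every ridge lies in at least two facets, hence (by passing any non-maximal face up to a ridge) it is ``without boundary'' in the paper's sense, so the hypotheses of \Cref{thm:main} are met. Your plan instead tries to re-derive \Cref{thm:main} from scratch. The dimension~$0$ and~$1$ cases and the $|A\cap B|=2$ subcase in dimension~$2$ are fine, but the $|A\cap B|=1$ case in dimension~$2$ (and the corresponding dimension~$3$ cases) contain a genuine gap. You propose to ``bound the number of such $C$ by counting facets through these cross edges and combine the resulting estimate with $|\family\cap\st(w)|$, using the no-boundary condition to guarantee that $\st(w)$ is large enough to absorb the slack.'' This is not an argument: $w=A\cap B$ is not known to be a base face for $\family$, so $\family$ can be split between $\st(w)$ and a potentially large set of facets through cross edges $a_ib_j$, each cross edge may lie in arbitrarily many facets in a general complex without boundary, and there is no a priori numerical relation guaranteeing $|\st(w)|$ dominates. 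In fact the paper does \emph{not} prove the result by counting; it proves it by \emph{replacing} facets, using the missing edge exchange property (\Cref{prop:meep}) and the dual-pair machinery of \Cref{sec:setting} to trade facets outside a target star for facets inside it while preserving the intersecting property and not decreasing the size. That replacement step is the actual content of the theorem and is absent from your sketch.

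The same issue is amplified in dimension~$3$: your observation that $\link_\C(v)$ is again flag, pure and without boundary is correct and is the right kind of observation, but ``invoke the dimension-$2$ result on links'' does not by itself control families that avoid any fixed vertex. The paper handles this with the notion of a minimal \emph{base face} (a face meeting every member of $\family$) of size $1,2,3$ or $4$, the classification of dual pairs in the flag $1$-dimensional link (\Cref{lem:1dflag}), and the sector/outerlink analysis (\Cref{ssec:sectors}), culminating in \Cref{lemma:3to2,lemma:4to3}. If you want a self-contained proof along your lines, you would need to supply, at a minimum, a replacement lemma analogous to \Cref{lemma_vertex_flip} that converts facets containing a cross edge into facets inside $\st(w)$ without shrinking the family; as written, the proposal asserts the desired inequality without a mechanism to produce it.
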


\begin{corollary}
  The generalized cluster complexes $\C(m, \Phi)$, for arbitrary $m$, are pure-EKR for every $\Phi$ of rank at most four.
    In particular, the complex of $(m+2)$-angulations of a convex $(mn+2)$-gon is pure-EKR for $m \geq 1$ and $n \leq 5$.
\end{corollary}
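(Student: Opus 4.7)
The plan is to derive this corollary directly from Theorem~\ref{thm:main} by checking its four hypotheses (pure, flag, dimension at most three, no boundary) for every generalized cluster complex $\C(m,\Phi)$ of rank at most four and every integer $m \geq 1$. Three of them are already recorded in the introduction: Fomin-Reading~\cite{FR2005} show that $\C(m,\Phi)$ is a pure flag complex of dimension $n-2$, where $n-1 = \operatorname{rank}(\Phi)$, so $\operatorname{rank}(\Phi) \leq 4$ immediately gives dimension at most three. Only the absence of boundary requires separate justification.

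For this point, I first reduce ``without boundary'' to the ridge level: since $\C(m,\Phi)$ is pure, any non-maximal face $\sigma$ is contained in some facet $F$, and removing any vertex of $F \setminus \sigma$ produces a ridge $\tau$ with $\sigma \subseteq \tau \subsetneq F$; if $\tau$ lies in a second facet $F'$, then $\sigma \subseteq F'$ as well. So it suffices to show that every ridge is in at least two facets. For this I would invoke the generalized flip/exchange property established by Fomin-Reading: each ridge of $\C(m,\Phi)$ is contained in exactly $m+1$ facets, which is $\geq 2$ for every $m \geq 1$. In the type-$A_{n-1}$ case this is the elementary polygon statement that deleting one diagonal from an $(m+2)$-angulation of an $(mn+2)$-gon merges two adjacent $(m+2)$-gons into a $(2m+2)$-gon, and the $(2m+2)$-gon admits exactly $m+1$ decompositions into two $(m+2)$-gons, one for each of its ``long'' diagonals connecting antipodal vertices.

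With the four hypotheses verified, Theorem~\ref{thm:main} directly yields the pure-EKR property for $\C(m,\Phi)$ whenever $\operatorname{rank}(\Phi) \leq 4$. The ``in particular'' clause then follows by recalling the Fomin-Reading identification of $\C(m, A_{n-1})$ with the complex of $(m+2)$-angulations of a convex $(mn+2)$-gon~\cite{FR2005}, together with the equivalence $\operatorname{rank}(A_{n-1}) \leq 4 \iff n \leq 5$. The only nontrivial step is the no-boundary verification; if a completely self-contained proof is preferred, the polygon argument above takes care of type $A$ in all ranks, and the remaining finitely many irreducible types of rank at most four ($B_k$, $C_k$, $D_4$, $F_4$, $H_3$, $H_4$, $I_2(p)$) can each be handled via their explicit combinatorial models (e.g., centrally symmetric $(m+2)$-angulations for types $B/C$) to confirm that every ridge flips in exactly $m+1$ ways.
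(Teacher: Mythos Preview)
Your proposal is correct and matches the paper's approach: the paper states this corollary as an ``immediate consequence'' of \Cref{thm:main} without giving an explicit proof, relying on the fact (implicit in the introduction and in the implication diagram) that generalized cluster complexes are pure, flag, and without boundary. Your write-up simply spells out the verification of these hypotheses, including the reduction of ``without boundary'' to ridges and the Fomin--Reading exchange property giving $m+1\ge 2$ facets through each ridge, which is exactly the intended argument.
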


On the other hand we prove that no intersecting family in which every member touches a certain fixed edge can have size larger than the largest star:

\begin{theorem}
\label{thm:main2}
Let $\C$ be a flag pure simplicial complex without boundary and let $ab$ be an edge in $\C$.
Let~$\family$ be an intersecting family of facets of $\C$ contained in the union of the stars of~$a$ and $b$.
Then (at least) one of the two stars has size greater or equal than $|\family|$.
\end{theorem}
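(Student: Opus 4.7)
The plan is to recast the statement as a matching question on a bipartite ``disjointness'' graph and verify the required marriage condition using the flag and no-boundary hypotheses. First I partition the facets of $\C$ contained in $\st(a)\cup\st(b)$ into three pieces: $\C_{ab}=\st(a)\cap\st(b)$, $\mathcal{A}^{*}=\st(a)\setminus\st(b)$, and $\mathcal{B}^{*}=\st(b)\setminus\st(a)$, and set $\mathcal{A}=\family\cap\mathcal{A}^{*}$, $\mathcal{B}=\family\cap\mathcal{B}^{*}$. Every facet of $\C_{ab}$ meets every facet of $\st(a)\cup\st(b)$ through $a$ or $b$, so enlarging $\family$ to include all of $\C_{ab}$ preserves the intersecting property and can only increase $|\family|$; I may therefore assume $\family\supseteq\C_{ab}$. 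By the $a\leftrightarrow b$ symmetry of the hypotheses I may also assume $|\st(a)|\ge|\st(b)|$, equivalently $|\mathcal{A}^{*}|\ge|\mathcal{B}^{*}|$. The goal then reduces to proving
\[
|\mathcal{A}|+|\mathcal{B}|\le|\mathcal{A}^{*}|.
\]

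Next I reformulate this as a bipartite matching problem. Let $H$ be the bipartite graph with parts $\mathcal{A}^{*}$ and $\mathcal{B}^{*}$ and an edge $F\sim G$ exactly when $F\cap G=\emptyset$. Because any two facets inside $\mathcal{A}^{*}$ share $a$ and any two inside $\mathcal{B}^{*}$ share $b$, the intersecting hypothesis on $\family$ is equivalent to $\mathcal{A}\cup\mathcal{B}$ being an independent set in $H$. By K\"onig's theorem the maximum independent set of $H$ has size $|\mathcal{A}^{*}|+|\mathcal{B}^{*}|-\mu(H)$, where $\mu(H)$ is the matching number, so it suffices to produce a matching of size $|\mathcal{B}^{*}|$ in $H$, i.e., one saturating the smaller side. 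Equivalently, by Hall's marriage theorem, I need that for every $\mathcal{B}'\subseteq\mathcal{B}^{*}$ the neighborhood $N_H(\mathcal{B}')=\{F\in\mathcal{A}^{*}:F\cap G=\emptyset\text{ for some }G\in\mathcal{B}'\}$ satisfies $|N_H(\mathcal{B}')|\ge|\mathcal{B}'|$.

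This is where the flag and no-boundary hypotheses enter, and where the real content lies. For each $G\in\mathcal{B}^{*}$ flagness forces a vertex $v\in G\setminus\{b\}$ that is not adjacent to $a$: otherwise $G\cup\{a\}$ would be a face, contradicting maximality of $G$. In particular $a$ lies in $V\setminus G$, and $\{a\}$ can be enlarged to a clique $F_G$ that is maximal inside the induced subcomplex $\C|_{V\setminus G}$; by construction $a\in F_G$, $b\notin F_G$, and $F_G\cap G=\emptyset$. The no-boundary hypothesis is then used to promote $F_G$ to a genuine facet of $\C$ disjoint from $G$: if a clique $F$ maximal in $\C|_{V\setminus G}$ were not a facet of $\C$, every one-vertex extension of $F$ in $\C$ would have to lie in $G\cap\bigcap_{x\in F}N(x)$, and iteratively applying the no-boundary property to the ridges of any such extension should allow one to pivot back out of $G$ while keeping $a$ in the facet.

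The main obstacle, on which I expect the real work to concentrate, is promoting this pointwise existence of disjoint partners into the full Hall condition: distinct members of $\mathcal{B}'$ must be matched to distinct facets in $\mathcal{A}^{*}$, and nothing in the pointwise argument alone rules out collisions. The two plausible routes to close the gap are (i) choosing $F_G$ canonically via a lexicographically minimal pivoting procedure relative to a fixed vertex order, together with an exchange property of flag complexes that forces the assignment $G\mapsto F_G$ to be injective; or (ii) an induction on $|\mathcal{A}^{*}|+|\mathcal{B}^{*}|$ in which one removes a matched pair $(F,G)$ with $F\cap G=\emptyset$ and verifies that the residual complex still enjoys enough flagness and absence of boundary to close the induction. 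In either approach the delicate point is to ensure that the no-boundary pivots used for different $G$'s do not collide on a common vertex; this ultimately relies on the global flag geometry rather than purely local considerations, and is the step I expect to be technically the hardest.
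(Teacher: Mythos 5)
Your reduction is correct and clean: after absorbing $\C_{ab}=\st(a)\cap\st(b)$ into $\family$ and passing to the bipartite disjointness graph $H$ on $\mathcal{A}^{*}\sqcup\mathcal{B}^{*}$, the theorem is exactly equivalent, via K\"onig, to the existence of a matching in $H$ saturating the smaller side. Your pointwise argument that each $G\in\mathcal{B}^{*}$ has a disjoint partner in $\mathcal{A}^{*}$ is also essentially right (and can be streamlined: take any facet through $a$ and repeatedly apply the missing edge exchange of \Cref{prop:meep} at a vertex in its intersection with $G$; since the exchanged-in vertex is never adjacent to the exchanged-out one, it cannot lie in $G$, so the intersection with $G$ strictly decreases while $a$ is retained). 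This, however, is only the Hall condition for singletons. The theorem \emph{is} the Hall condition for arbitrary $\mathcal{B}'\subseteq\mathcal{B}^{*}$, and that is precisely what you have not proved. Your route (i) fails as stated: no canonical pointwise pivoting rule guarantees $G\mapsto F_G$ is injective, since two distinct $G,G'\in\mathcal{B}^{*}$ with large overlap may well force the same partner; injectivity of such a map is essentially what needs to be proved, not an ingredient that comes for free from an exchange property. Route (ii) is also problematic because deleting a matched pair $(F,G)$ from $\C$ does not leave a flag pure complex without boundary, so there is no obvious induction hypothesis to invoke.

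The paper closes this gap by a genuinely different mechanism (\Cref{lemma_vertex_flip}, \Cref{theorem_2_to_1_meep}): rather than building a matching, it attaches to the cross-intersecting pair $(\family_{a\overline b},\family_{b\overline a})$ a ``dual pair'' $(\U_a,\U_b)$ of upper sets in $\link_\C(ab)$ and runs a decreasing induction on the set of essential vertices $E(\U)$. For an essential vertex $v$, one compares the subfamilies $R_a(v)\subseteq\family_{a\overline b}$ and $R_b(v)\subseteq\family_{b\overline a}$ of facets pinched at $v$, assumes $|R_a(v)|\ge|R_b(v)|$, and uses missing edge exchange to replace each $\facetA\in R_a(v)$ by a facet $\facetA'$ with $v$ swapped out for a non-neighbor $u$ of $v$; setting $\family'=\family\setminus R_b(v)\cup R'_a(v)$ preserves the intersecting property and the size, while killing $v$ as an essential vertex. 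Iterating empties one side of the dual pair and hence one of $\family_{a\overline b},\family_{b\overline a}$, giving a base vertex. The crucial structural point you are missing is this ``many-to-many'' trade governed by essential vertices: the paper never needs a system of distinct representatives, so it never needs Hall for sets of size $\ge 2$.
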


The $2$-dimensional case of \Cref{thm:main} (see \Cref{thm:dim2}) is a corollary of the machinery developed in \Cref{sec:setting} for proving \Cref{thm:main2}.
The $3$-dimensional case (see \Cref{theorem:3dim}) is more involved and uses additional machinery presented in \Cref{sec:three}.
The paper concludes in \Cref{sec:examples} with 
various constructions of examples of simplicial complexes which do not have the pure-EKR property.

\medskip

In the remainder of the introduction, we discuss previous work on EKR-properties and their relations to our main \Cref{conj:no_free_ridges} (see \Cref{ssec:relations}).
This is followed by some examples that limit the possibility of extending our conjectures (see \Cref{ssec:limits}).

\subsection{Relations to previous work}
\label{ssec:relations}

The following result of  Holroyd, Spencer and Talbot is further evidence in favor of \Cref{conj:no_free_ridges}:

\begin{theorem}[Holroyd, Spencer and Talbot~\cite{HST2005}]
\label{thm:HST05}
  Let~$G$ be a complete $(d+1)$-partite graph with every part of size at least~$2$.
  Then the clique complex of~$G$, which is pure of dimension $d$, is pure-EKR.
\end{theorem}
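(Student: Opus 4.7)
The plan is to proceed by induction on the dimension $d$. Identifying facets of the clique complex with elements of $V_0 \times \cdots \times V_d$ (where $n_i := |V_i|$), two facets intersect precisely when they agree in at least one coordinate, and the star of a vertex in $V_i$ has size $N/n_i$ with $N := \prod_j n_j$; the target inequality is $|\family| \leq N/\min_i n_i$.

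The base case $d = 1$ (complete bipartite) is immediate: if $\family$ contains edges $(u_0,u_1)$, $(u_0,w_1)$ and $(w_0,u_1)$ with $u_0 \neq w_0$ and $u_1 \neq w_1$, then the last two are disjoint; hence any intersecting family of edges lies in a single star, and the largest star is obtained by fixing a vertex in the smaller part.

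For the inductive step, fix the part $V_d$ and partition $\family = \bigsqcup_{v \in V_d} \family_v$ according to the $V_d$-coordinate, where $\family_v = \{F \in \family : v \in F\}$. Setting $\family_v' := \{F \setminus \{v\} : F \in \family_v\}$, each $\family_v'$ is a family of facets of the complete $d$-partite clique complex on $V_0 \cup \cdots \cup V_{d-1}$, and for distinct $v,w \in V_d$ the two families $\family_v',\family_w'$ are \emph{cross-intersecting}: a disjoint pair $F_1 \in \family_v'$, $F_2 \in \family_w'$ would lift to disjoint facets $F_1 \cup \{v\}, F_2 \cup \{w\}$ of $\family$, contradicting intersectingness. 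Since $|\family| = \sum_v |\family_v'|$, the theorem reduces to bounding the total size of $n_d$ pairwise cross-intersecting families in the $d$-partite subcomplex.

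The induction hypothesis must therefore be strengthened to a cross-intersecting statement of the form: for $k \geq 1$ pairwise cross-intersecting families $\family_1, \ldots, \family_k$ of facets in a complete $(d+1)$-partite clique complex with parts of size at least $2$, the total $\sum_i |\family_i|$ is bounded by $\max(kM, N)$, where $M$ is the maximum star size. Instantiated in our setting with $k = n_d$ in the $d$-partite subcomplex, the two candidate bounds become $N/\min_{i<d} n_i$ and $N/n_d$, whose maximum equals $N/\min_i n_i$ in both cases depending on whether $n_d$ achieves the overall minimum. The main obstacle is the strengthened cross-intersecting bound itself: I would attack it by simultaneous left-compression in each part (verifying that each compression preserves every one of the $\binom{k}{2}$ pairwise cross-intersection relations), and then perform a case analysis on which of two extremal configurations — all $\family_i$ coinciding with a common maximum star, versus a single family dominating — drives the bound. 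Ruling out hybrid extremal configurations is the delicate step, and it is exactly here that the hypothesis that every part has at least two vertices is used, since a part of size $1$ trivializes intersection through that coordinate and destroys the dichotomy.
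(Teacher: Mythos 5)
The paper does not prove this statement; it is quoted verbatim as a result of Holroyd, Spencer, and Talbot and the paper simply cites \cite{HST2005}. So there is no ``paper's own proof'' to compare against; I can only evaluate your argument on its own.

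Your reduction step is sound: identifying facets with elements of $V_0 \times \cdots \times V_d$, projecting onto $V_d$, and observing that the fibres $\family'_v$ for distinct $v \in V_d$ are pairwise cross-intersecting in the $d$-partite subcomplex is exactly correct, and the arithmetic $\max(n_d M', N') = N/\min_i n_i$ checks out. The base case $d=1$ is also correct. However, the argument has a genuine gap precisely at the point you flag yourself: the strengthened cross-intersecting bound $\sum_i |\family_i| \le \max(kM, N)$ is asserted but not proved, and it is the entire content of the theorem. The compression-plus-case-analysis sketch is plausible (the analogous result for complete $r$-uniform families is Hilton's lemma), but ``ruling out hybrid extremal configurations'' is not a detail to be deferred --- it is the theorem. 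Without a worked-out proof that left-compressions (applied part by part) preserve all $\binom{k}{2}$ cross-intersection relations \emph{and} a complete analysis of compressed families showing that the only extremal configurations are ``all families equal to one star'' or ``one family is everything,'' the argument does not close.

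There is also a structural issue with the claimed induction. You announce induction on $d$ with the cross-intersecting bound as the strengthened induction hypothesis, but your inductive step only derives EKR$(d)$ from cross$(d-1)$; it never derives cross$(d)$ from cross$(d-1)$. If you project a collection of $k$ pairwise cross-intersecting families onto $V_d$, the resulting fibres $\family'_{i,v}$ are cross-intersecting only when both $i \neq j$ \emph{and} $v \neq w$, which is a weaker bipartite-indexed structure than pairwise cross-intersection and is not covered by your induction hypothesis. So cross$(d)$ would have to be established by the standalone compression argument in every dimension, at which point the ``induction on $d$'' collapses to a single reduction step and the real work is entirely unproved. To repair this you would either need to prove the cross-intersecting lemma directly (carrying out the compression argument in full) or strengthen the induction hypothesis further to the bipartite-indexed cross-intersecting setting so that the projection genuinely closes the loop.
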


Indeed, the clique complex of every complete multipartite graph is pure and flag and the assumption that each part has size at least two forces the complex to be without boundary.
Holroyd et al.~prove also that the \defn{$r$-skeleton} (i.e., the subcomplex of faces of dimension at most~$r$) of the same class of complexes is pure-EKR for any $r \leq d$.
Note that these complexes include the cross-polytopes as minimal examples, obtained when all parts have size two. Cross-polytopes also play a role in our work; see \Cref{ex:counterex1}, \Cref{conj:crosspolytopes} and \Cref{exm:crosspolytopes}.

\medskip

The pure-EKR property, that is, the focus of this work, can be generalized in two directions (both already present in the original paper by Erd\H{o}s-Ko-Rado): the faces in the family may be required to pairwise intersect in at least $t$ points instead of a single one, or the intersecting family may consist of faces of a fixed dimension~$r$ distinct from $\dim(\C)$ (or even of various dimensions). Unfortunately the literature is not consistent, and, for instance, Borg~\cite{Bor2009} says a complex is $t$-EKR if it has the EKR property generalized in the first way, while Woodroofe~\cite{Woodroofe2011} says it is~$r$-EKR if it satisfies the property generalized in the second way. In the following discussion as well as in \Cref{sec:examples}, we use the following unifying approach.

\begin{definition}
\label{EKR-general}
A \defn{$t$-intersecting family} is a family in which every two elements have at least $t$ common members.
A complex~$\C$ is called \defn{$t$-intersecting EKR} if every $t$-intersecting family of faces has at most as many members as the star of some $(t-1)$-face, with the property being \defn{strict} if all families maximizing the number of members are stars. It is called \defn{$t$-intersecting pure-EKR} if the same holds for families of facets (in this case~$\C$ is assumed to be pure). When $t=1$ we omit the word $t$-intersecting.
\end{definition}

In this language, the classical paper Erd\H{o}s-Ko-Rado~\cite{EKR1961} shows that $\K(n,r)$ is pure-EKR for $n \geq 2r$ and $t$-intersecting pure-EKR for~$n$ sufficiently large.
The following more precise version is due to Ahlswede-Khachatrian~\cite{AK1997}, following work by Frankl~\cite{Fra1978} and Wilson~\cite{Wil1984}.
We refer to Borg~\cite[Theorem~1.3]{Bor2009} for details.

\begin{theorem}
\label{thm:EKR2}
  Let $1 \leq t < r < n$ and let~$\K(n,r) $ be the complete $(r-1)$-complex on $n$ vertices. Then
  \begin{itemize}
    \item~$\K(n,r) $ is $t$-intersecting pure-EKR if and only if $(t+1)(r-t+1) \leq n$ and
    \item~$\K(n,r) $ is strict $t$-intersecting pure-EKR if and only if $(t+1)(r-t+1) < n$.
  \end{itemize}
\end{theorem}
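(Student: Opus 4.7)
The plan is to follow the combinatorial shifting strategy of Frankl, refined by Wilson's linear algebra, rather than attempt a brand-new proof. First I would invoke the standard compression technique: for each pair $1 \leq i < j \leq n$, define the shifting operator $S_{ij}$ which replaces every $A \in \F$ by $(A \setminus \{j\}) \cup \{i\}$ whenever the resulting set is not already in $\F$. A direct check shows that $S_{ij}$ preserves both the cardinality $|\F|$ and the $t$-intersection property. Iterating these operators until stabilization produces a left-compressed family with the same size, so it suffices to prove the bound $|\F| \leq \binom{n-t}{r-t}$ under the additional assumption that $\F$ is compressed.

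Second, for a compressed $t$-intersecting family I would establish the structural statement that $\F$ is contained in some ``Frankl family'' of the form $\F_s = \{A \in \binom{[n]}{r} : |A \cap [t+2s]| \geq t+s\}$, for some integer $s$ with $0 \leq s \leq r-t$. The key lemma is that in a compressed family, if some member fails to contain $[t]$, then an exchange argument forces every member to have at least $t+s$ elements in the initial segment $[t+2s]$ for a uniform $s \geq 1$. Granted this, the proof reduces to the elementary task of comparing the hypergeometric sums $|\F_s|$ for $s = 0, 1, \dots, r-t$. A short calculation shows that $|\F_0| = \binom{n-t}{r-t}$ dominates all other $|\F_s|$ precisely when $(t+1)(r-t+1) \leq n$, with strict dominance when the inequality is strict, which yields both ``if'' statements of the theorem.

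For the converse direction I would exhibit explicit witnesses: the family $\F_1 = \{A : |A \cap [t+2]| \geq t+1\}$ is $t$-intersecting (any two members share at least $t$ of the $t+2$ distinguished elements) and one verifies that $|\F_1| > \binom{n-t}{r-t}$ whenever $(t+1)(r-t+1) > n$; since $\F_1$ is not contained in any star of a $(t-1)$-face, this already breaks the (strict) EKR property at the stated threshold. A slightly finer analysis, using the larger family $\F_s$ with the right $s$, gives the failure of the non-strict version.

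The main obstacle is the sharp threshold $(t+1)(r-t+1)$. Frankl's 1978 paper obtained the bound only for $n$ large in terms of $r$ and $t$; pushing it down to the optimal threshold requires Wilson's eigenvalue argument for the Johnson scheme, where one applies Hoffman's ratio bound to the graph whose vertices are the $r$-subsets of $[n]$ and whose edges join pairs intersecting in fewer than $t$ elements. An alternative, which I would probably prefer for a clean exposition, is to quote the Ahlswede-Khachatrian Complete Intersection Theorem, which classifies all extremal $t$-intersecting families as some $\F_s$ and thus reduces the proof to the binomial comparison in the second paragraph above.
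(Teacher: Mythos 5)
Your proposal is correct and takes essentially the same approach as the paper: the paper gives no proof of this statement but cites the Complete Intersection Theorem of Ahlswede--Khachatrian (via Borg), which is exactly the route you settle on in your final paragraph. One caution worth recording is that the compression step alone (your second paragraph) does \emph{not} deliver the sharp threshold $(t+1)(r-t+1)\le n$ --- a left-compressed $t$-intersecting family need not lie inside a single $\F_s$, and Frankl's argument only controls the regime of large $n$ --- so the reduction to a binomial comparison among the $\F_s$ genuinely requires the full Ahlswede--Khachatrian classification (or Wilson's eigenvalue bound for the $s=0$ threshold), as you correctly acknowledge.
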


Holroyd and Talbot conjectured the (strict) EKR property of low-dimensional subcomplexes of flag complexes.

\begin{conjecture}[Holroyd and Talbot~\cite{HT2005}]
\label{conj:HT}
  Let~$\C$ be a flag simplicial complex with minimal facet cardinality~$n$ and let $\C^{(r)}$ be its $(r-1)$-skeleton.
  Then
  \begin{enumerate}[(1)]
    \item $\C^{(r)}$ is pure-EKR if $2r \leq n$ and \label{eq:conj:HT1}
    \item $\C^{(r)}$ is strict pure-EKR if $2r < n$.
  \end{enumerate}
\end{conjecture}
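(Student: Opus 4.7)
The plan is to set up an induction on the skeleton dimension $r$, mimicking the partition strategy used for the classical Erd\H{o}s--Ko--Rado theorem but adapted to exploit flagness. The base case $r=1$ is trivial, and when $\C$ is the full simplex one recovers \Cref{thm:EKR2} with $t=1$.

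For the inductive step, I would take an intersecting family $\family$ of $r$-faces of $\C$, choose a vertex $v \in \vertices(\C)$, and partition $\family = \family_v \sqcup \family_{-v}$ according to whether a face contains $v$. The trivial bound $|\family_v| \leq |\st_{\C^{(r)}}(v)|$ handles one half; the content is controlling $|\family_{-v}|$. Because $\C$ is flag, $\link_\C(v)$ is again a flag complex, and stripping $v$ from the members of $\family_v$ gives a family of $(r-1)$-faces in that link. The intersecting hypothesis on $\family$ then becomes a \emph{cross-intersection} condition between this derived family and $\family_{-v}$, which one can hope to bound by applying the inductive hypothesis to a suitable flag subcomplex of $\C \setminus \st(v)$ together with a Hilton--Milner-type estimate. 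The assumption $2r \leq n$ is essential to ensure that stars are large enough to dominate: every $(r-1)$-face extends to a facet of cardinality at least $n \geq 2r$, so each vertex lies in many $r$-cliques, a fact that should absorb the non-star contributions.

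The main obstacle, as in the general Holroyd--Talbot problem, is that the hypothesis ``minimum facet cardinality $\geq n$'' does not descend cleanly to links or to deletions, so the induction parameter has to be chosen with care; the naive dimension $r$ by itself is not enough. A more promising alternative would be a compression argument in the spirit of Frankl, executed directly on the $1$-skeleton graph $G$ of $\C$: one would like to design a shifting operation on edges so that the clique complex of the shifted graph remains flag, still has minimum facet cardinality at least $n$, and carries an intersecting family of $r$-cliques at least as large as the original. The delicate point is that classical vertex-shifting of simplicial complexes typically destroys flagness, so one must invent a graph-level operation that \emph{creates} rather than \emph{deletes} edges in a controlled way, and then verify that the resulting extremal configurations are principal stars. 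I expect designing such a compression, and handling the non-principal (Hilton--Milner-like) extremal configurations that may arise under it, to be the crux of the problem.
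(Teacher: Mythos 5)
This statement is \Cref{conj:HT}, which the paper records as an \emph{open conjecture} of Holroyd and Talbot, not as a theorem. The paper provides no proof of it, and to date none is known; the authors only observe that, in the special case where $\C$ is additionally pure and without boundary, the skeleton $\C^{(r)}$ has the missing edge exchange property (\Cref{prop:meep,prop:meep2}), so that their \Cref{theorem_2_to_1_meep} gives a partial step toward it. Your proposal is therefore not a proof and cannot be compared against one.

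Reading your sketch on its own terms, you yourself identify the two unresolved gaps, and they are genuine. First, the partition $\family = \family_v \sqcup \family_{-v}$ with induction on $r$ does not close because, as you note, the ``minimum facet cardinality $\ge n$'' hypothesis need not pass to $\link_\C(v)$ or to the deletion $\C \setminus \st(v)$, and the cross-intersection bound you invoke (``a Hilton--Milner-type estimate'') is precisely what one would need to prove and is not available in this generality. Second, the compression route founders on exactly the point you flag: classical vertex-shifting destroys flagness, and no substitute edge-level shifting operation is known that simultaneously preserves flagness, preserves the lower bound on facet size, and does not shrink intersecting families. In short, your write-up is an honest survey of obstacles rather than a proof, and since the underlying statement is an open conjecture this is not a defect of your reasoning but a reflection of the state of the problem. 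If you want to make contact with what the paper actually proves, the closest analogue is \Cref{theorem_2_to_1_meep} together with \Cref{prop:meep,prop:meep2}, which resolve only the case of a base face of size two under the stronger hypothesis that $\C$ is pure and without boundary.
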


Observe that here the conditions that all facets of~$\C$ have cardinality (much) larger than~$r$ implies the $(r-1)$-skeleton to be pure and without boundary.
For the case where~$\C$ itself is not only flag, but also pure and without boundary, we see in \Cref{prop:meep,prop:meep2} that $\C^{(r)}$ has what we call the \defn{missing edge exchange property} (see \Cref{def:meep}).
In particular, our \Cref{theorem_2_to_1_meep}, which is a generalization of \Cref{thm:main2} for complexes with that property, may be a step towards proving \Cref{conj:HT}.

In~\cite{Bor2009}, Borg removed flagness from \Cref{conj:HT} and proved several special cases of the statement. For example, the following is a special case of~\cite[Theorem~2.1]{Bor2009}.

\begin{theorem}[{Borg~\cite{Bor2009}}]
\label{thm:Borg}
  Let $t \leq r$ and set $n^*_0(r,t) = (r-t)\binom{3r-2t-1}{t+1}+r$.
  Let~$\C$ be a simplicial complex with minimal facet cardinality at least~$n^*_0(r,t)$ and let $\C^{(r)}$ be its $(r-1)$-skeleton.
  Then $\C^{(r)}$ is strict $t$-intersecting pure-EKR.
\end{theorem}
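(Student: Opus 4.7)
The plan is to adapt a Hilton--Milner / Frankl-style argument to the simplicial complex setting, reducing Borg's theorem to the classical $t$-intersecting EKR statement (\Cref{thm:EKR2}) applied \emph{locally} inside a single large facet of $\C$, together with an auxiliary bound on ``external'' facets of $\C^{(r)}$ that escape that facet.

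First I would fix a maximum $t$-intersecting family $\family$ of facets of $\C^{(r)}$, pick any $\face_0 \in \family$, and choose a facet $F$ of $\C$ containing $\face_0$, so $|F|\ge n^*_0(r,t)$. Write $\family = \family_F \sqcup \family'$ where $\family_F = \{\face \in \family : \face \subseteq F\}$ and $\family' = \family \setminus \family_F$. The family $\family_F$ is a $t$-intersecting family of $r$-subsets of the ground set $F$. One checks that $n^*_0(r,t) > (t+1)(r-t+1)$, so \Cref{thm:EKR2} applies inside $F$ and gives
\[
  |\family_F| \le \binom{|F|-t}{r-t},
\]
with equality only when $\family_F$ is the star of some $t$-subset $\smallface \subseteq F$.

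The heart of the argument is a dichotomy on the structure of $\family_F$. In case (a), $\family_F$ is itself the star of a $t$-subset $\smallface\subseteq F$; then every $\face \in \family'$ must $t$-intersect each of the $\binom{|F|-t}{r-t}$ elements of $\family_F$, and a direct count (using $|F|$ large) forces $\smallface \subseteq \face$, so $\family \subseteq \st(\smallface)$ and we are done. In case (b), $\family_F$ is not a star; a Hilton--Milner-style bound inside $F$ lets me extract a ``spread'' subfamily of $\family_F$ whose common covers by $t$-subsets are confined to a kernel of size at most $3r-2t-1$. A careful enumeration of the $r$-subsets that contain a $t$-subset of this kernel and also contain at least one vertex outside $F$ yields
\[
  |\family'| \le (r-t)\binom{3r-2t-1}{t+1},
\]
which is exactly the excess built into the definition of $n^*_0(r,t)$. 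Combining the two bounds with $|F|\ge n^*_0(r,t) = (r-t)\binom{3r-2t-1}{t+1}+r$, a short estimate shows $|\family| < \binom{|F|-t}{r-t}$, strictly less than the size of the star of any $t$-subset of $F$, contradicting maximality.

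The main obstacle will be case (b): producing the sharp combinatorial bound $(r-t)\binom{3r-2t-1}{t+1}$ on the external family $\family'$. This requires identifying the right ``spread'' subfamily of $\family_F$ and a careful case analysis of how an $r$-set using a vertex outside $F$ can simultaneously $t$-intersect all members of that subfamily; the specific shape of $n^*_0(r,t)$ is what makes the two bounds fit together. Upgrading this to the \emph{strict} version requires additionally ruling out equality in case (a) when $\family' \neq \emptyset$, which is obtained by revisiting the direct count and noting that any $\face \in \family'$ outside $\st(\smallface)$ forces $\family_F$ to lose at least one element, breaking the equality $|\family_F| = \binom{|F|-t}{r-t}$.
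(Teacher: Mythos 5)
Note first that the paper does not prove \Cref{thm:Borg}: it is imported as a special case of Borg's~\cite[Theorem~2.1]{Bor2009} with no proof given, so there is no proof of the paper's own to compare against.

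Evaluated on its own merits, your sketch has a reasonable overall shape --- fix a large facet $F$ containing some $\face_0 \in \family$, split $\family$ into $\family_F$ (the part inside $F$) and $\family'$, control $\family_F$ by \Cref{thm:EKR2} inside $F$, and bound $\family'$ separately --- but the decisive step is asserted rather than argued. The claimed ``Hilton--Milner-style bound'' producing a kernel of size at most $3r-2t-1$ is not a statement you can simply cite, and $|\family'| \le (r-t)\binom{3r-2t-1}{t+1}$ does not follow from it as described: a member of $\family'$ need only $t$-intersect each member of your spread subfamily, and this pairwise condition does not in general force it to contain a $t$-subset of a fixed bounded kernel (that is a sunflower-type conclusion requiring its own argument, and the vertices of $\family'$ outside $F$ are unconstrained a priori). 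You call this ``the main obstacle,'' but it is essentially the whole content of the theorem, so without it the sketch is not a proof. There is also a gap in the dichotomy: case (a) requires $\family_F$ to be the \emph{full} star of a $t$-set (otherwise it does not have $\binom{|F|-t}{r-t}$ elements and the direct count fails), so case (b) must handle both genuinely spread families and proper subfamilies of a star; for the latter no Hilton--Milner deficit is available, and the constant bound on $|\family'|$ needs a separate justification. Finally, the specific threshold $n^*_0(r,t)$ is never derived; one must show that the Hilton--Milner deficit, a polynomial of degree $r-t$ in $|F|$, exceeds the constant bound on $|\family'|$ precisely when $|F| \ge n^*_0(r,t)$, and nothing in the sketch pins this down.
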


Finally, 45 years ago Chv{\'a}tal conjectured the following very general EKR property for arbitrary simplicial complexes.
Chv{\'a}tal's Conjecture implies \Cref{conj:HT}\eqref{eq:conj:HT1} and Borg's relaxation, while it does not immediately imply any pure-EKR properties such as \Cref{thm:Borg} or our \Cref{conj:no_free_ridges}.

\begin{conjecture}[Chv{\'a}tal~\cite{Chv1974}]
  \label{chvatal}
  Every simplicial complex is EKR.
\end{conjecture}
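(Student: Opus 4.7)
The plan is to mount a shifting/compression attack, which is the standard approach to EKR-type problems in extremal set theory and which Chv\'atal himself used to settle several special cases of his own conjecture. For each ordered pair $i<j$ in the vertex set $[n]$ of $\C$, one defines the compression $S_{ij}$ that sends a face $F \mapsto (F\setminus\{j\})\cup\{i\}$ whenever $j \in F$, $i\notin F$, and $(F\setminus\{j\})\cup\{i\} \notin \C$; otherwise $S_{ij}(F) = F$. Call $\C$ \emph{left-shifted} if $S_{ij}(\C) = \C$ for all $i<j$. A short bijective argument (essentially Chv\'atal's original lemma) shows that a left-shifted $\C$ has a maximum intersecting family of the form $\family_1=\{F \in \C : 1 \in F\}$. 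The whole strategy therefore reduces to lifting this conclusion from left-shifted complexes to arbitrary simplicial complexes.

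The reduction I would attempt proceeds in two steps. First, given an intersecting family $\family \subset \C$ of maximum size, apply compressions $S_{ij}$ simultaneously to $\C$ and to $\family$, checking that the intersection property is preserved and that no face escapes $\C$. When this works for all $(i,j)$, one terminates with a left-shifted $\C'$ and an intersecting family $\family'$ of the same size inside it, and invokes the shifted case. Second, at pairs $(i,j)$ where $S_{ij}$ forces some image out of $\C$, one would try a local replacement: swap those facets for neighbouring facets in $\C$ that agree with the compression on the rest of the family, or pass to a smaller complex (for instance, $\C \setminus \st(v)$ or $\link_\C(v)$ for a well-chosen vertex $v$) and induct on the number of facets.

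The main obstacle, and the reason the conjecture has been open for forty-five years despite attacks by Sch\"onheim, Stein, Engel, Mikl\'os, Snevily, Borg and others, is precisely that compressions of an arbitrary simplicial complex need not produce a simplicial complex: down-closure can be destroyed, so one cannot iterate the shifting uniformly, and the local replacement idea above breaks down exactly when the obstruction propagates through an entire ``column'' of faces. A genuinely new ingredient seems to be required, either an algebraic/entropy argument that bypasses shifting altogether, or a carefully designed pair of compressions that jointly preserve both down-closure of $\C$ and the intersecting property of $\family$. As a warm-up I would test the plan on complexes with a transitive automorphism group (where any star already gives a natural candidate for extremality), on pure complexes (where the problem reduces to the uniform setting closer to Theorem~\ref{thm:EKR2}), and on complexes obtained by gluing a known EKR complex to a simplex, since these are the cases where an inductive reduction has the best chance of succeeding.
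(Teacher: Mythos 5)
The statement you were asked to prove is not a theorem of the paper --- it is Chv\'atal's Conjecture from 1974, which the paper explicitly records \emph{as a conjecture} and makes no attempt to prove. There is therefore no ``paper's own proof'' to compare against; the paper cites it only to situate its own \Cref{conj:no_free_ridges} within the landscape of open EKR problems. Your write-up is commendably honest in that it is labelled a plan and you yourself conclude that ``a genuinely new ingredient seems to be required,'' so to be clear: what you have submitted is a survey of why the standard attack fails, not a proof, and it should not be regarded as one.

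Two concrete technical remarks on the content. First, your definition of the compression has the condition inverted: you write $S_{ij}(F)=(F\setminus\{j\})\cup\{i\}$ when $(F\setminus\{j\})\cup\{i\}\notin\C$, which would shift a face \emph{out} of the complex. The usual convention is to shift only when the target is \emph{not} already present in the family being compressed (to avoid collisions), i.e.\ the condition should involve the set you are compressing, not an exclusion from $\C$. Second, the obstruction you name (``down-closure can be destroyed'') is not actually the core difficulty --- the elementary shift $S_{ij}$ does send downsets to downsets. The harder issue is that after compressing both $\C$ and an intersecting $\family\subseteq\C$ one obtains $|\family'|\le|\st_{\C'}(1)|$ in the shifted complex $\C'$, but there is no a priori inequality comparing $\max_v|\st_{\C'}(v)|$ with $\max_v|\st_\C(v)|$ in the direction the argument needs, nor does the shifted $\family'$ necessarily sit inside $\C$. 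This is precisely the wall that Chv\'atal, Frankl, Snevily, Borg and others hit, and it is why the problem remains open. Your warm-up cases (transitive automorphism group, pure complexes, gluings) are reasonable sanity checks but none of them has led to a general proof in the forty-plus years since the conjecture was posed.
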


\subsection{Limits of generalizations}
\label{ssec:limits}

The following are two elementary prototypes of counterexamples that emphasize the importance of the two conditions of \emph{flagness} and \emph{absence of boundary} in \Cref{conj:no_free_ridges}.

\begin{counterexample}[Two elementary non-pure-EKR complexes]
\label{exm:elementary}
  The boundary complex of the triangle or, more generally, of a $d$-simplex for $d\ge 2$, is the prototype of a non-flag complex.
  It is not pure-EKR as all $d+1$ facets meet pairwise, while each vertex only belongs to only $d$ of them.
  Among flag complexes, a manifold with boundary which is not pure-EKR is depicted below. It can easily be generalized to higher dimensions by considering a $d$-simplex and its $d+1$ neighbors:

  \begin{center}
    \begin{tikzpicture}
      \node[draw,
        regular polygon,
        regular polygon sides=3,
        fill=black!20,
        minimum height=8em] (a) {};
      \foreach \x in {1,2,...,3}
        \fill (a.corner \x) circle[radius=2pt];
      \node[draw,
        shape border rotate=180,
        regular polygon,
        regular polygon sides=3,
        fill=black!40,
        minimum height=4em] (a) {};
      \foreach \x in {1,2,...,3}
        \fill (a.corner \x) circle[radius=2pt];
    \end{tikzpicture}
  \end{center}
\end{counterexample}

In \Cref{sec:examples} we discuss several ways to extend these prototypes to more general families of counterexamples.

\medskip

We are also interested in whether or not our conjectures can be generalized to \emph{strict pure-EKR properties}, or \emph{higher $t$-intersecting pure-EKR properties}.

\medskip

Concerning \Cref{conj:manifolds}, the answer is that we can neither hope for a strict pure-EKR property nor a~$2$-intersecting pure-EKR property as can both be seen in the boundary complexes of cross polytopes:

\begin{counterexample}[Strict pure-EKRness for flag manifolds]
\label{ex:counterex1}
  The boundary complex of the octahedron is a flag manifold without boundary that is  not strict pure-EKR: all vertices are contained in~$4$ facets, but any of the $2^4$ ways of choosing one of each pair of opposite facets is also an intersecting family. The same happens in the higher-dimensional analogue, the boundary complex $\partial \beta^d$ of the $d$-dimensional cross-polytope. There are $2^{2^{d-1}}$ intersecting families of facets of the same size as the $2^d$ stars of vertices: every family not containing a pair of opposite facets is intersecting.
\end{counterexample}

\begin{counterexample}[$2$-intersecting pure-EKRness for flag manifolds]
\label{ex:counterex2}
  The boun\-dary complex $\partial \beta^4$ of the $4$-dimensional cross polytope $\beta^4$ is a flag manifold without boundary that does not have the~$2$-intersecting pure-EKR property: all edges are contained in exactly~$4$ facets, but any facet together with its four neighbors form a~$2$-intersecting family of size five.
\end{counterexample}

We extend \Cref{ex:counterex1,ex:counterex2} to more general families of counterexamples in \Cref{sec:examples}.

\medskip

We next have a closer look at the limits of generalizations towards triangulations.
The computations discussed before \Cref{conj:m_angulations} suggest that the pure-EKR property in \Cref{conj:triangulations,,conj:m_angulations,,conj:cluster} might be strict.
Moreover, the complexes $\C(m,A_{n-1})$ are also~$2$-intersecting pure-EKR for all $n+m \leq 8$.
However, the following closely related examples show that they are neither strict $2$-inter\-sec\-ting pure-EKR nor $3$-intersecting pure-EKR in general.

\begin{counterexample}[strict $2$-intersecting pure-EKRness for triangulations]
\label{ex:counterex4}
  Consider triangulations of a heptagon, all of which have four internal diagonals.
  Any family consisting of one triangulation~$T$ and the four triangulations obtained by flipping an inner diagonal in~$T$ is a $2$-intersecting family of size five which is not a star, e.g.,

  \vspace{5pt}

  \begin{center}
    \begin{tikzpicture}[scale=.5, every node/.style={scale=0.7}]
      \ngon{7}{(-12.5,0)}{2}
          {1/2/black,2/3/black,3/4/black,4/5/black,5/6/black,6/7/black,1/7/black,
          1/3/lightgrey,1/4/lightgrey,1/5/lightgrey,1/6/lightgrey}
        {6}{1,2,3,4,5,6,7}{first}{112.5}

      \ngon{7}{(-7.5,0)}{2}
          {1/2/black,2/3/black,3/4/black,4/5/black,5/6/black,6/7/black,1/7/black,
          2/4/lightgrey,1/4/lightgrey,1/5/lightgrey,1/6/lightgrey}
        {6}{1,2,3,4,5,6,7}{first}{112.5}

      \ngon{7}{(-2.5,0)}{2}
          {1/2/black,2/3/black,3/4/black,4/5/black,5/6/black,6/7/black,1/7/black,
          1/3/lightgrey,3/5/lightgrey,1/5/lightgrey,1/6/lightgrey}
        {6}{1,2,3,4,5,6,7}{first}{112.5}

      \ngon{7}{(2.5,0)}{2}
          {1/2/black,2/3/black,3/4/black,4/5/black,5/6/black,6/7/black,1/7/black,
          1/3/lightgrey,1/4/lightgrey,4/6/lightgrey,1/6/lightgrey}
        {6}{1,2,3,4,5,6,7}{first}{112.5}

      \ngon{7}{(7.5,0)}{2}
          {1/2/black,2/3/black,3/4/black,4/5/black,5/6/black,6/7/black,1/7/black,
          1/3/lightgrey,1/4/lightgrey,1/5/lightgrey,5/7/lightgrey}
        {6}{1,2,3,4,5,6,7}{first}{112.5}
    \end{tikzpicture}
  \end{center}

  \vspace{5pt}

  In contrast, the star of a face of size two cannot have size larger than five: once we introduce two diagonals into the heptagon we are either left with two quadrilaterals (which yields four possibilities to complete to a triangulation) or with a pentagon (which yields five).
\end{counterexample}

\begin{counterexample}[$3$-intersecting pure-EKRness for triangulations]
\label{ex:counterex3}
  Consider triangulations of an octagon, all of which have five internal diagonals.
  The five triangulations in \Cref{ex:counterex4} joined with the edge $1$---$7$ form a $3$-intersecting family.
  Since the starting triangulation shares~$4$ edges with all others, one can as well flip the additional edge $1$---$7$ to obtain another triangulation, producing a $3$-intersecting family of size six in total, e.g.,

  \vspace{5pt}

  \begin{center}
    \begin{tikzpicture}[scale=.5, every node/.style={scale=0.7}]
      \ngon{8}{(-12.5,0)}{2}
          {1/2/black,2/3/black,3/4/black,4/5/black,5/6/black,6/7/black,7/8/black,1/8/black,
          1/3/lightgrey,1/4/lightgrey,1/5/lightgrey,1/6/lightgrey,1/7/lightgrey}
        {6}{1,2,3,4,5,6,7,8}{first}{112.5}

      \ngon{8}{(-7.5,0)}{2}
          {1/2/black,2/3/black,3/4/black,4/5/black,5/6/black,6/7/black,7/8/black,1/8/black,
          2/4/lightgrey,1/4/lightgrey,1/5/lightgrey,1/6/lightgrey,1/7/lightgrey}
        {6}{1,2,3,4,5,6,7,8}{first}{112.5}

      \ngon{8}{(-2.5,0)}{2}
          {1/2/black,2/3/black,3/4/black,4/5/black,5/6/black,6/7/black,7/8/black,1/8/black,
          1/3/lightgrey,3/5/lightgrey,1/5/lightgrey,1/6/lightgrey,1/7/lightgrey}
        {6}{1,2,3,4,5,6,7,8}{first}{112.5}

      \ngon{8}{(2.5,0)}{2}
          {1/2/black,2/3/black,3/4/black,4/5/black,5/6/black,6/7/black,7/8/black,1/8/black,
          1/3/lightgrey,1/4/lightgrey,4/6/lightgrey,1/6/lightgrey,1/7/lightgrey}
        {6}{1,2,3,4,5,6,7,8}{first}{112.5}

      \ngon{8}{(7.5,0)}{2}
          {1/2/black,2/3/black,3/4/black,4/5/black,5/6/black,6/7/black,7/8/black,1/8/black,
          1/3/lightgrey,1/4/lightgrey,1/5/lightgrey,5/7/lightgrey,1/7/lightgrey}
        {6}{1,2,3,4,5,6,7,8}{first}{112.5}

      \ngon{8}{(12.5,0)}{2}
          {1/2/black,2/3/black,3/4/black,4/5/black,5/6/black,6/7/black,7/8/black,1/8/black,
          1/3/lightgrey,1/4/lightgrey,1/5/lightgrey,1/6/lightgrey,6/8/lightgrey}
        {6}{1,2,3,4,5,6,7,8}{first}{112.5}
    \end{tikzpicture}
  \end{center}

  \vspace{5pt}

  In contrast, the star of a face of size three cannot have size larger than five.
\end{counterexample}

%%%%%%%%%%%%%%%%%%%%%%%%%%%%%%%%%%%%%%%%%%%%%%%%%%%%%%%%%
\section{A reduction in arbitrary dimension}
\label{sec:setting}
%%%%%%%%%%%%%%%%%%%%%%%%%%%%%%%%%%%%%%%%%%%%%%%%%%%%%%%%%

\subsection{Simplicial complexes}
\label{subsec:basic}
We start by recalling the necessary elementary definitions on (abstract) simplicial complexes, sometimes also referred to in the literature as \defn{downsets}, \defn{hereditary sets}, or \defn{independence systems}.

\medskip

A \defn{simplicial complex}~$\C$ is a family of subsets of some ground set~$S$ such that
\[
  \smallface \subseteq \face \in \C \Rightarrow \smallface \in \C.
\]
The elements of~$\C$ are called \defn{faces} or \defn{simplices}, the \defn{dimension} of a face~$\face$ is given by $\dim(\face) = |\face|-1$, that is, by its size minus one, and the \defn{dimension} of~$\C$ is the maximal dimension of one of its faces. 
We sometimes call a simplicial complex of dimension~$d$ a \defn{$d$-complex}. A subset of $S$ not contained in $\C$ is termed a \defn{non-face} of $\C$.
The containment-wise maximal faces are called \defn{facets}, and the faces of dimension zero, one, two and three are called \defn{vertices}, \defn{edges}, \defn{triangles} and \defn{tetrahedra} respectively. The set of vertices of a simplicial complex~$\C$ is denoted by $\vertices (\C)$.
We usually denote simplices by Greek letters such as $\smallface$ and $\face$, facets by capital Latin letters such as $\facetA$, and vertices by small Latin characters such as~$a$, $b$ or~$v$. Whenever we denote a face by its elements from the ground set, say $\face = \{a,b,c\} \in \C$, we omit set notation and write $\face = abc$ for brevity.
For a face~$\face$ of~$\C$, define the \defn{complement} by
\[
  \C \setminus \face := \set{ \smallface \in \C}{ \smallface \cap \face = \emptyset},
\]
the \defn{link} by
\[
  \link_\C(\face) := \set{ \smallface \in \C}{ \face \cap \smallface = \emptyset,\ \face \cup \smallface \in \C},
\]
and the \defn{star} by
\[
  \st_\C(\face) := \set{ \smallface }{\face \subseteq \smallface \in \C}.
\]

The complex~$\C$ is called \defn{flag} if all its containment-wise minimal non-faces have cardinality two.
It is \defn{pure} if all of its facets have the same dimension, say $d$, in which case faces of dimension $d-1$ are called \defn{ridges}. 

\subsection{Dual pairs of cross-intersecting families}

The following definitions are of central importance for the proofs of the two main results of this article.

\begin{definition}
\label{def:dp}
Let~$\D$ be a simplicial complex and let $\U$ be a subset of faces of~$\D$. We say that:
\begin{enumerate}

\item $\U$ is an  \defn{upper set} if $\face\in \U$ and $\face\subseteq \face'\in \D$ implies $\face'\in \U$. We denote by $\langle \U \rangle$ the upper set generated by $\U$; that is:
\[
\langle \U \rangle := \set{ \face'\in \D }{ \exists \, \face\in \U, \face\subseteq\face'}.
\]
\item The \defn{essential vertices $E(\U)$} of $\U$ are the vertices in the minimal elements of $\U$.
\item Another subset $\U'$ of faces of~$\D$ \defn{cross intersects} $\U$ if for all pairs of faces $\face\in \U$ and $\face'\in \U'$ we have $\face\cap \face'\ne\emptyset$.
\item The \defn{intersecting dual} of $\U$ in~$\D$ is
\[
\U^* :=\set{\face'\in \D }{ \face'\cap \face\ne \emptyset, \forall \face\in \U}.
\]
\item We  say that $(\U_1,\U_2)$ is a \defn{dual pair} in~$\D$ if $\U_1={\U_2}^*$ and $\U_2={\U_1}^*$.

\end{enumerate}
\end{definition}

\begin{proposition}
\label{prop:dual_pairs}
Let~$\D$ be a simplicial complex and let $\U$ be a set of faces of~$\D$. Then the following basic facts on cross intersecting pairs hold with respect to $\U$.

\begin{enumerate}
\item $\U^*$ is the largest family of faces of~$\D$ cross intersecting $\U$, and it is an upper set.
\label{prop:dual_pairs.1}
\item $\langle\U\rangle\subseteq \U^{**}$ and $\U^*= \U^{***}$. In particular, $(\U^*, \U^{**})$ is a dual pair for every $\U$, and every dual pair arises in this way.
\label{prop:dual_pairs.2}
\item $E(\U^*)\subseteq E(\U)$. In particular, if $\U=(\U_1,\U_2)$ is a dual pair then $E(\U_1)=E(\U_2)$, and we denote this set by $E(\U)$.
\label{prop:dual_pairs.3}
\item A dual pair cannot be properly contained in another dual pair. I.e., if $\U = (\U_1, \U_2)$ and $\U' = (\U'_1, \U'_2)$ are dual pairs with $U_1 \subset U'_1$ and $U_2 \subset U'_2$ then $U_1 = U'_1$ and $U_2 = U'_2$.
\label{prop:dual_pairs.4}
\end{enumerate}
\end{proposition}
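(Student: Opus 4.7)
The plan is to handle the four claims in the order stated, noting that (1), (2), and (4) are routine manipulations of the definition of intersecting dual, while (3) is the one point that requires a real argument.

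For (1), I would argue directly from the definition: every family cross-intersecting $\U$ is contained in $\U^*$, so $\U^*$ is the largest one. That $\U^*$ is upward closed in $\D$ follows by noting that if $\sigma \cap \tau \neq \emptyset$ and $\sigma \subseteq \sigma'$ then $\sigma' \cap \tau \neq \emptyset$ as well. For (2), the first step is the elementary observation that if $\tau \in \U$ and $\tau \subseteq \sigma$, then every $\sigma' \in \U^*$ meets $\tau$ and hence meets $\sigma$; this gives $\langle \U \rangle \subseteq \U^{**}$ and, in particular, $\U \subseteq \U^{**}$. Combining this with the anti-monotonicity $\mathcal V \subseteq \mathcal W \Rightarrow \mathcal W^* \subseteq \mathcal V^*$ yields $\U^{***} \subseteq \U^*$, and applying $\mathcal V \subseteq \mathcal V^{**}$ with $\mathcal V = \U^*$ gives the reverse. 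The equality $\U^* = \U^{***}$ is exactly the assertion that $(\U^*, \U^{**})$ is a dual pair, and to see every dual pair arises this way I would just take $\U = \U_1$ inside a given dual pair $(\U_1, \U_2)$, so that $\U^* = \U_2$ and $\U^{**} = \U_1$.

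Part (3) is the interesting step, so I would argue by contradiction. Suppose $v \in E(\U^*) \setminus E(\U)$ and pick a minimal $\sigma \in \U^*$ with $v \in \sigma$; my goal is to show $\sigma \setminus v \in \U^*$, contradicting minimality. Given any $\tau \in \U$, I shrink it to a minimal $\tau_0 \subseteq \tau$ with $\tau_0 \in \U$. The hypothesis $v \notin E(\U)$ forces $v \notin \tau_0$, while $\sigma \in \U^*$ forces $\tau_0 \cap \sigma \neq \emptyset$, so $\tau_0 \cap (\sigma \setminus v) \neq \emptyset$ and therefore $\tau \cap (\sigma \setminus v) \neq \emptyset$. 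This holds for every $\tau \in \U$, as desired. The corollary for dual pairs is then immediate: applying $E(\U_i^*) \subseteq E(\U_i)$ in both directions gives $E(\U_1) \subseteq E(\U_2) \subseteq E(\U_1)$.

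The main obstacle, such as it is, lies in (3): a vertex in a minimal element of $\U^*$ need not a priori sit in a minimal element of $\U$, and the key trick is to exploit the failure of this on the $\U$ side in order to remove the vertex on the $\U^*$ side. Part (4) is then a one-line consequence of anti-monotonicity: the containment $\U_1 \subseteq \U_1'$ gives $\U_2' = (\U_1')^* \subseteq \U_1^* = \U_2$, which together with the hypothesis $\U_2 \subseteq \U_2'$ forces $\U_2 = \U_2'$, and symmetrically $\U_1 = \U_1'$.
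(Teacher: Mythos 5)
Your proof is correct and takes essentially the same approach as the paper: each part hinges on the same observations (anti-monotonicity of $*$, and the ability to shrink an element of $\U$ to a minimal one while preserving the intersection with a fixed $\sigma \in \U^*$). The only structural difference is in (3), where you argue by contradiction (showing $\sigma \setminus v$ would land in $\U^*$ if $v \notin E(\U)$) while the paper argues directly (using $\sigma \setminus v \notin \U^*$ to exhibit a minimal element of $\U$ meeting $\sigma$ exactly in $v$) --- these are contrapositive reformulations of the same argument.
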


\begin{proof}
We prove \Cref{prop:dual_pairs} part by part.

\begin{description}
  \item[\eqref{prop:dual_pairs.1}] This part is obvious.

  \item[\eqref{prop:dual_pairs.2}] Since $\langle \U\rangle$ cross intersects $\U^*$, we can apply part \eqref{prop:dual_pairs.1} to $\U^*$ to obtain $\langle \U\rangle \subseteq \U^{**}$. In particular, we also have $\U^* \subseteq \U^{***}$. On the other hand, intersecting every element in $\U^{**}$ is a stronger condition than intersecting every element in $\U$, hence $\U^{***} \subseteq \U^*$.

  \item[\eqref{prop:dual_pairs.3}] If $E(\U^*)=\emptyset$ then either $\U^*=\emptyset$ or $\U^*=\langle \emptyset\rangle = \D$, and the result follows (observe that $\emptyset^*=\D$ and $\D^*=\emptyset$). Hence, let $v\in E(\U^*)$. Then there exists a minimal element $\face \in \U^*$ such that $v\in \face$. Then $\face\setminus v \notin \U^*$ and there exists $\smallface \in \U$ such that $\smallface \cap \face = v$. Let $\smallface'$ be any minimal element of $\U$ contained in $\smallface$. As $\smallface'$ intersects $\face$, necessarily $v\in \smallface'$, hence $v\in E(\U)$.

  \item[\eqref{prop:dual_pairs.4}] Suppose for dual pairs $\U = (\U_1, \U_2)$ and $\U' = (\U'_1, \U'_2)$ we have that $U_1 \subsetneq U'_1$. Hence, we have for the dual that $U'_2 \subseteq U_2$. If $U'_2 = U_2$, then by duality $U_1 = U'_1$. Hence $U'_2 \subsetneq U_2$. In particular, $\U$ as a dual pair cannot be properly contained in another dual pair $\U'$. \qedhere
\end{description}
\end{proof}

Observe that if $\U_1$ and $\U_2$ cross intersect, then the dual pair $(\U_1^{**}, \U_1^* )$ satisfies $\U_1\subseteq \U_1^{**}$ and $\U_2\subseteq \U_1^*$.
Part \eqref{prop:dual_pairs.1} of the proposition suggests the following definition:

\begin{definition}
  Let $\U= (\U_1,\U_2)$ be a dual pair for a complex~$\D$ and $\U'=(\U'_1,\U'_2)$ be a dual pair for a complex $\D'$. We say that $\U$ and $\U'$ are \defn{isomorphic} if there exists a bijection $\phi:E(\U)\to E(\U')$ such that, up to interchanging $\U_1'$ and $\U_2'$, a face $\face$ is a minimal element in $\U_i$ if and only if $\phi(\face)$ is a minimal element in $\U_i'$.
\end{definition}

In particular, this definition means that $(\U_1,\U_2)$ and $(\U_2,\U_1)$ are isomorphic.
Moreover, observe that we do not assume~$\D$ and $\D'$ to be isomorphic complexes or to have the same number of vertices.

\begin{example}
  \label{ex:dual0}
  For every complex~$\D$,  $(\emptyset, \D)$ is a dual pair, which we call the trivial dual pair. If $\face=\{v_1,\dots.v_k\}$ is a face of $\U$ then $(\langle\face\rangle,\langle v_1,\dots , v_k \rangle)$ is a dual pair too. As a special case,  $(\langle v\rangle, \langle v\rangle)$ is a  dual pair for every vertex~$v$ in~$\D$.\footnote{Here and in what follows we omit braces for the set of faces inside the angle bracket $\langle \cdot \rangle$ for uppersets. Moreover, recall that we write a face as a list of its vertices. That is, the upper set generated by the $1$-element set $\{\face\}=\{\{v_1,\dots,v_k\}\}$ is denoted by $\langle \face\rangle = \langle v_1v_2\dots v_k\rangle$. Do not confuse with $\langle v_1,\dots , v_k \rangle$, the upper set generated by the set of $k$ vertices $\{ \{v_1\}, \dots, \{v_k\}\}$.}
\end{example}

\begin{example}
  \label{ex:dual1}
  If~$\D$ is $1$-dimensional and a cycle of length greater than four, the three types of dual pairs mentioned in \Cref{ex:dual0} are the only ones that can occur. If~$\D$ is a cycle of length four (with vertices $u,v,w,x$ in that order) a new type arises, such that the full list is:
  \[
    (\emptyset, \D),\qquad (\langle v\rangle, \langle v\rangle), \qquad
    (\langle u,v\rangle, \langle uv\rangle) \qquad (\langle uv,wx\rangle,\langle vw,ux\rangle).
  \]
\end{example}

\begin{example}
  \label{ex:dual2}
  Let~$\D$ be the~$d$-simplex and, for each $k$, let $\U^{(k)}$ denote the set of its $k$-dimensional faces. Then $\U^{(k)}$ is dual to $\U^{(d-k)}$. In particular, if $d$ is even then $\U^{(d/2)}$ is self-dual, that is $(\langle \U^{(d/2)} \rangle , \langle \U^{(d/2)} \rangle)$ is a dual pair.
\end{example}

\begin{example}
  \label{ex:dual3}
  A full list of dual pairs generated by vertices and edges in arbitrary complexes $\D$ is listed below.

  \begin{figure}[ht]
    \includegraphics[height=3cm]{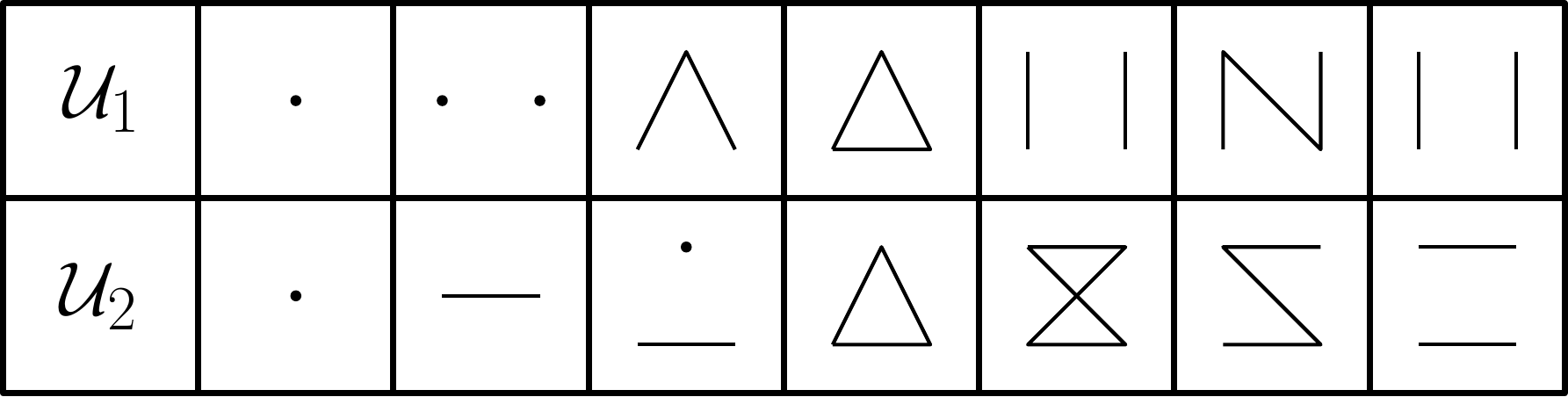}
    \caption{The minimal elements of all possible non trivial isomorphism classes of dual pairs generated by vertices and edges.\label{fig:dualpairs}}
  \end{figure}

  To see that there are no others, first assume that the ambient complex $\D$ contains a complete graph (every pair of vertices forms an edge).

  Dual pairs involving vertices on at least one side occur in exactly three scenarios (up to isomorphism): Since there cannot be more than two connected components per side, we have a single vertex, two vertices, or a vertex and a simple graph in which every vertex is contained in every edge (that is, a single edge).

  It remains to consider dual pairs having only edges on both sides. Both graphs formed by these edges must have vertex covering sets of cardinality two, and no vertex cover of size one. Moreover, there cannot be any vertex of degree three, because every edge in the dual must then contain this vertex (otherwise there is a triangle in the dual) and thus the double dual then contains a vertex. We conclude by noting that there are only four graphs admitting vertex covers of size two with maximum vertex degree at most two, giving rise to three more dual pairs.

  Now assume that the ambient space does not contain a complete graph. Note that removing edges from one of the previously listed dual pairs results in dual pairs isomorphic to dual pairs already in the list. The only exception to this rule is when we remove both of the diagonals of the sixth dual pair in \Cref{fig:dualpairs}. This results in the seventh isomorphism class, which thus is the only isomorphism class that can not be realized in a complex with a complete graph.
\end{example}

In order to apply the framework of dual pairs to intersecting families of facets of pure simplicial complexes we first need to establish some additional notation.

Let~$\family$ be an intersecting family of facets in a pure (but not necessarily flag) complex~$\C$.
For a vertex $a \in \C$, we define $\family_a := \st_\C (a) \cap \family$ to denote the set of all facets in~$\family$ containing~$a$. Similarly, we define $\family_\smallface := \st_\C (\smallface) \cap \family $. For vertices $a,b \in \C$ we write
$\family_{a\overline b} := \family_a \setminus \st_\C (b)$ to denote all facets in~$\family$ containing~$a$ but not $b$. 

Furthermore, let
$\lkcap{\C}{a}{b} := \set{\face \in \C}{\vertices (\face) \subset \vertices (\link_\C (a) \cap \link_\C (b))}$
be the complex of faces spanned by vertices in the intersection of the vertex links of~$a$ and $b$ in~$\C$.
Consider the restriction
\[
\family_{a\overline{b}} |_{\lkcap{\C}{a}{b}} := \set{ \facetA \cap \vertices(\lkcap{\C}{a}{b}) }{ \facetA \in \family_{a\overline b}}.
\]

The collection $\family_{a\overline{b}} |_{\lkcap{\C}{a}{b}}$ contains all the relevant information about the conditions that $\family_a$ imposes to elements in $\family_b$ in order for~$\family$ to be an intersecting family: If $\facetA\in \C_{b\overline a}$, then $\facetA$ intersects all elements in $\family_a$ if and only if it intersects every element in $\family_{a\overline{b}} |_{\lkcap{\C}{a}{b}}$.
In particular, $\family_{a\overline{b}} |_{\lkcap{\C}{a}{b}}$  and $\family_{b\overline{a}} |_{\lkcap{\C}{a}{b}}$ are cross intersecting pairs.

\begin{definition}
Let $\U=(\U_a,\U_b)$ be a dual pair of $\lkcap{\C}{a}{b}$. We say that $\U$
\defn{supports the intersecting family~$\family$ at~$a$ and $b$}
if $\family_{a\overline{b}} |_{\lkcap{\C}{a}{b}} \subseteq \U_a$ and $\family_{b\overline{a}} |_{\lkcap{\C}{a}{b}} \subseteq \U_b$.
\end{definition}

Note that, by definition, $(\U_a,\U_b)$ possibly supports an intersecting family larger than~$\family$, that is, a ``strictly better candidate'' for a counterexample to the pure-EKR property. Naturally, it suffices to look at these underlying larger families to establish that~$\C$ is pure-EKR.

To see this, note that \Cref{prop:dual_pairs} implies that both of the following dual pairs support~$\family$ at $ab$:
\[
\left({\family_{a\overline{b}} |_{\lkcap{\C}{a}{b}}}^{**},{\family_{a\overline{b}} |_{\lkcap{\C}{a}{b}}}^*\right) \enspace \text{ and } \enspace \left({\family_{b\overline{a}} |_{\lkcap{\C}{a}{b}}}^*,{\family_{b\overline{a}} |_{\lkcap{\C}{a}{b}}}^{**}\right).
\]
However, these may not be necessarily equal: Suppose $\family_{a\overline{b}} |_{\lkcap{\C}{a}{b}}=\{ u\}$ and  $\family_{b\overline{a}} |_{\lkcap{\C}{a}{b}}=\{uv\}$. Then
${\family_{a\overline{b}} |_{\lkcap{\C}{a}{b}}}^*={\family_{a\overline{b}} |_{\lkcap{\C}{a}{b}}}^{**}=\langle u\rangle$ and ${\family_{b\overline{a}} |_{\lkcap{\C}{a}{b}}}^*= \langle u,v\rangle$, ${\family_{b\overline{a}} |_{\lkcap{\C}{a}{b}}}^{**}= \langle uv\rangle$.
Note that for every $v\in E(\U)$ there always exists $\face \in \U_a$ and $\smallface \in \U_b$ such that $\face\cap\smallface = v$.

%%%%%%%%%%%%%%%%%%%%%%%%%%%%%%%%%%%%%%%%%%%%%%%%%%%%%%%%%
\subsection{Base faces of size two}
\label{ssec:proofarbdim}
%%%%%%%%%%%%%%%%%%%%%%%%%%%%%%%%%%%%%%%%%%%%%%%%%%%%%%%%%

The underlying idea for the proof of both our main results is to consider a face~$\face$ intersecting every element of a given intersecting family~$\family$.
Observe that every facet in~$\family$ is such a face, because~$\family$ is an intersecting family.
If some vertex~$a$ used in~$\family$ is as well such a face, then the family~$\family$ must be contained in the star of~$a$ and~$\family$ cannot be a counterexample to the pure-EKR property. 

The proof of our first main result \Cref{thm:main} goes by interpolating between these two cases, the one we want and the want we trivially have. Incidentally, our second main result \Cref{thm:main2}, proved in this section in arbitrary dimension, serves as one step in this process in both the $2$- and the $3$-dimensional setting.

\begin{definition}
  Let~$\family$ be an intersecting family of facets in a pure~$d$-complex~$\C$.
  We say that a face $\face \in \C$ is a \defn{base face} for~$\family$ if $\face$ intersects all facets in~$\family$.
  That is, if~$\family$ is contained in the union of stars of vertices of $\face$.
\end{definition}

\Cref{thm:main2} deals with the case when we have a base face of size two.
Our proof works for pure complexes more general than flag and without boundary. We now introduce the exact property that we need. We believe that this property captures some very essential characteristics of pure-EKR simplicial complexes. 

\begin{definition}
  \label{def:meep}
  A~$d$-dimensional pure simplicial complex~$\C$ is said to have the \defn{missing edge exchange property}, if for every ridge $\face \in \C$ and every vertex $v$ such that $\face \cup v$ is a facet there is a vertex $u\ne v$ such that $\face \cup u$ is again a facet and $uv$ is not an edge in~$\C$.
\end{definition}

Observe that  pure complexes with the missing edge exchange property have no boundary. The converse is not true in general, but holds for flag complexes:

\begin{proposition}
  \label{prop:meep}
  Every flag pure simplicial complex without boundary has the missing edge exchange property.
\end{proposition}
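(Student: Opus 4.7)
The plan is to argue by contradiction, combining the two hypotheses in the obvious way: flagness forces any candidate missing edge to actually be present in the complex, while the dimension bound combined with absence of boundary will then give the contradiction.

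First I would fix a ridge $\sigma$ and a vertex $v$ with $\sigma\cup v$ a facet, and unpack what the link $\link_\C(\sigma)$ looks like. Since $\C$ is pure of dimension $d$ and $\sigma$ has dimension $d-1$, every face $\tau\in\link_\C(\sigma)$ satisfies $|\tau|\le 1$, so $\link_\C(\sigma)$ is a $0$-dimensional complex whose vertices are exactly the $w$ such that $\sigma\cup w$ is a facet. The hypothesis that $\C$ has no boundary then tells me that $\link_\C(\sigma)$ has at least two vertices, i.e.\ there is at least one $u\ne v$ with $\sigma\cup u$ a facet.

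Next I would assume for contradiction that no such $u$ works, meaning that every vertex $u\ne v$ with $\sigma\cup u$ a facet also satisfies $uv\in\C$ (i.e.\ $uv$ is an edge). Pick any one such $u$ from the previous step and consider the vertex set $\sigma\cup\{u,v\}$. I claim every pair in this set spans an edge of $\C$: pairs inside $\sigma$ do because $\sigma$ is a face; each vertex of $\sigma$ forms an edge with $v$ because $\sigma\cup v\in\C$; each vertex of $\sigma$ forms an edge with $u$ because $\sigma\cup u\in\C$; and $uv\in\C$ by the contradiction hypothesis.

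Now flagness kicks in: since every pair of vertices in $\sigma\cup\{u,v\}$ spans an edge, the entire set $\sigma\cup\{u,v\}$ is a face of $\C$. But $|\sigma\cup\{u,v\}|=d+2$, so this face has dimension $d+1$, contradicting $\dim(\C)=d$. Hence such a $u$ with $uv\notin\C$ must exist, proving the missing edge exchange property. There is no real obstacle here; the only thing to double-check in writing is that the edge $uv$ being ``not an edge of $\C$'' is exactly the negation needed to block flagness from producing the forbidden $(d+1)$-face.
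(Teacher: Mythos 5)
Your proof is correct and follows essentially the same route as the paper: both use absence of boundary to produce a second facet $\sigma\cup u$ over the ridge $\sigma$, then invoke flagness plus purity of dimension $d$ to rule out the edge $uv$, since it would force the $(d+2)$-clique $\sigma\cup\{u,v\}$ to be a face. The only cosmetic difference is that you frame it as a contradiction over all $u$, whereas the paper observes directly that \emph{any} choice of $u\ne v$ in $\link_\C(\sigma)$ already satisfies $uv\notin\C$; your contradiction hypothesis is not actually used beyond picking a single $u$.
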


\begin{proof}
  Let~$\C$ be a flag pure simplicial complex without boundary. Because~$\C$ has no boundary, for every facet $\facetA \in \C$ and for every ridge $\face \in \facetA$, there exists another facet $\facetB \in \C$ containing $\face$. Moreover, let $u\in \facetA$ and $v \in \facetB$ be the vertices opposite $\face$. Then, $uv$ cannot be an edge in~$\C$ because otherwise~$\C$ contains a $(d+2)$-clique, which contradicts the flagness of~$\C$.
\end{proof}

A nice and interesting feature of the missing edge exchange property is that it is preserved under taking lower dimensional skeleta.

\begin{proposition}
  \label{prop:meep2}
  Let~$\C$ be a pure simplicial complex with the missing edge exchange property, and let~$\D$ be the $k$-skeleton of~$\C$, $ k < d$. Then~$\D$ has the missing edge exchange property.
\end{proposition}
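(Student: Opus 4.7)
The plan is to pull back a putative MEEP-violation in the $k$-skeleton $\D$ to one in $\C$ itself by extending a $k$-face of $\C$ to a full facet of $\C$. So fix a ridge $\face$ of $\D$, i.e., a face of $\C$ with $|\face|=k$, together with a vertex $v$ such that $\face\cup v$ is a facet of $\D$, i.e., a $k$-face of $\C$ of cardinality $k+1$. Since $k<d$ and $\C$ is pure, I would extend $\face\cup v$ to a facet $\facetA$ of $\C$.

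Next, set $\face':=\facetA\setminus v$. This is a ridge of $\C$ containing $\face$, and $\face'\cup v=\facetA$ is a facet of $\C$. Now apply the missing edge exchange property of $\C$ to the pair $(\face',v)$: it yields a vertex $u\neq v$ such that $\face'\cup u$ is again a facet of $\C$ and $uv$ is not an edge of $\C$.

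Finally, I would note that $u\notin\face'$ (from $|\face'\cup u|=d+1=|\face'|+1$), hence $u\notin\face$. Then $\face\cup u$ has cardinality $k+1$ and sits inside the facet $\face'\cup u$ of $\C$, so it is a face of $\C$, hence a facet of $\D$. The non-adjacency $uv\notin\C$ gives $uv\notin\D$ for $k\geq 1$ (since $\D$ and $\C$ share the same edge set), and is vacuous when $k=0$ because $\D$ has no edges. This verifies the missing edge exchange property for $\D$.

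There is no serious obstacle in this argument; the content of the proposition is just that the MEEP for $\D$ is a shadow of the corresponding condition at the top dimension in $\C$. The only point requiring (minor) care is checking that the vertex $u$ produced by the MEEP of $\C$ is not already in $\face$, which follows immediately by dimension counting once $\face'$ is chosen as a ridge of $\C$ passing through $\face$.
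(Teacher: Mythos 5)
Your proof is correct and takes essentially the same route as the paper's: extend the $(k{+}1)$-face $\face\cup v$ to a facet $\facetA$ of $\C$, apply the missing edge exchange property of $\C$ at the ridge $\facetA\setminus v$ to obtain $u$, and restrict back down to the $k$-skeleton. The paper phrases the same argument starting from the facet of $\D$ rather than the ridge, and omits the explicit dimension count showing $u\notin\face$ and the remarks on the $k=0$ case, but the substance is identical.
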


\begin{proof}
  To see this let $\face \in \D$ to be a $k$-face of~$\D$, and let $v \in \face$. Then there exists a facet $\facetA \in \C$ containing $\face$. By the missing edge exchange property of~$\C$, there exists a vertex $u \in \C$ such that $\facetB = \facetA \setminus \{ v\} \cup \{ u \}$ is a facet, and $uv$ is not an edge of~$\C$ (nor~$\D$). In particular, $\smallface = \face \setminus \{v\} \cup \{u\}$ is a $k$-face of~$\D$. It follows that~$\D$ has the missing edge exchange property.
\end{proof}

The following lemma shows that the missing edge exchange property can be a key ingredient in showing that complexes are pure-EKR in a very general context.
It is our most powerful tool for the rest of the paper. 

\begin{lemma}
\label{lemma_vertex_flip}
Let~$\C$ be a~$d$-complex with the missing edge exchange property, and
let~$\family$ be an intersecting family of facets of~$\C$ with base face $\face \supseteq ab$. Let $\U = (\U_a,\U_b)$ be a dual pair that supports~$\family$ at $ab$ and let $v \in E(\U)$ such that $v\not\in\face$ and there exists no $c\in \face\setminus \{a,b\}$ with $cv\in \C$.

Then there exists an intersecting family $\family'$ and a dual pair $\U'$ supporting $\family'$ at $ab$, such that $|\family'|\geq |\family|$ and $E(\U') \subseteq E(\U)\setminus\{v\}$.
\end{lemma}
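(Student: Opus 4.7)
The plan is to prove the lemma by constructing $\family'$ and $\U'$ via a flipping procedure driven by the missing edge exchange property.

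\emph{Step 1 (Flip setup).} For each facet $A \in \family$ with $v \in A$, I will apply the missing edge exchange property to the ridge $\tau_A := A \setminus \{v\}$ of $\C$ to obtain a vertex $u_A$ with $A^{u_A} := \tau_A \cup \{u_A\}$ a facet of $\C$ and $u_A v \notin \C$. Since $v \in E(\U) \subseteq V(\lkcap{\C}{a}{b})$, we have $av, bv \in \C$, and the condition $u_A v \notin \C$ then forces $u_A \neq a, b$. In particular, the flipped facet $A^{u_A}$ inherits the same $(a,b)$-status as $A$, so the decomposition of $\family$ according to the membership in the stars of $a$ and $b$ transfers to the new family. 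The hypothesis $v \notin \face$ implies $A \cap \face \subseteq A^{u_A} \cap \face$, so $\face$ remains a base face for the modified family.

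\emph{Step 2 (Construct $\family'$).} Define $\family'$ by substituting each $A \in \family$ with $v \in A$ by the flipped facet $A^{u_A}$, the $u_A$'s being chosen carefully. Two properties must be verified: first, that $\family'$ is intersecting---this is the main difficulty, requiring coordinated choices for pairs $A, B \in \family$ with $A \cap B = \{v\}$ so that $A^{u_A} \cap B^{u_B}$ remains nonempty; and second, that the assignment $A \mapsto A^{u_A}$ is injective, so that $|\family'| \geq |\family|$ (collisions are structurally constrained, occurring only when $|A \triangle B| = 2$, and can be avoided by exploiting alternative $u_A$ candidates permitted by MEEP).

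\emph{Step 3 (Dual pair $\U'$).} I will let $\U'$ be the dual pair generated by the restrictions of $\family'$ to $V(\lkcap{\C}{a}{b})$, taking for instance $\U' := \left((\family'_{a\overline b}|_{\lkcap{\C}{a}{b}})^{**}, (\family'_{a\overline b}|_{\lkcap{\C}{a}{b}})^{*}\right)$. By construction every restriction has had $v$ removed, so no minimal face of $\U'_a$ contains $v$ and therefore $v \notin E(\U')$. Combined with the hypothesis that no vertex of $\face \setminus \{a,b\}$ is adjacent to $v$ (which controls how newly introduced vertices $u_A$ can enter the link structure), a careful bookkeeping of the flipping in Step 2 will yield $E(\U') \subseteq E(\U) \setminus \{v\}$.

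The main obstacle lies in Step 2: simultaneously coordinating the $u_A$-choices across all facets containing $v$ so that $\family'$ remains intersecting, the flipping map is injective, and (for Step 3) no new essential vertex outside $E(\U)$ is created. The flexibility provided by the missing edge exchange property---which often allows several valid $u_A$ candidates---is the central tool, and resolving the coordination will likely require a case analysis based on the configuration of minimal faces of $\U_a$ and $\U_b$ that intersect at $v$.
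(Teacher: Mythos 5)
Your plan to flip \emph{every} facet of $\family$ that contains $v$ and then coordinate the choices $u_A$ so that $\family'$ stays intersecting does not get off the ground, and the reason is exactly the one you flag in Step 2: the missing edge exchange property (\Cref{def:meep}) guarantees the \emph{existence} of a single $u$ with $\face\cup u$ a facet and $uv\notin\C$, not a menu of candidates. You cannot assume ``several valid $u_A$ candidates'' for the coordination, so the obstruction for a pair $A,B$ with $A\cap B=\{v\}$---namely that the prescribed $A^{u_A}$ and $B^{u_B}$ may be disjoint---is not something you can argue away by a case analysis; with MEEP alone there is simply no lever to pull. This is precisely the difficulty your proposal leaves open, and it is the crux of the lemma.

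The paper's proof sidesteps the coordination problem by treating the two sides of the dual pair asymmetrically. It isolates the subfamilies $R_a(v)\subseteq\family_{a\overline b}$ and $R_b(v)\subseteq\family_{b\overline a}$ of facets that can intersect the other side \emph{only} at $v$ (those whose restriction lies in $Q_a(v)$, resp.\ $Q_b(v)$), assumes w.l.o.g.\ $|R_a(v)|\ge|R_b(v)|$, and then \emph{flips only} the facets of $R_a(v)$ while simply \emph{deleting} $R_b(v)$. Because each flipped facet $A'$ misses $v$ and misses the edge $uv$, it fails to intersect precisely the $\smallface\in\U_b$ that used to meet $A$ only at $v$; hence $A'\notin\family$, and the only elements of $\family$ that $A'$ could possibly be disjoint from are those in $R_b(v)$---which have been removed. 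No coordination between flips is needed. Injectivity of $A\mapsto A'$ then follows from $uv,wv\notin\C$, and $|\family'|\ge|\family|$ from the size comparison. Finally $\U'=\bigl((\U_b\setminus Q_b(v))^*,(\U_b\setminus Q_b(v))^{**}\bigr)$ supports $\family'$ with $v$ gone from the essential vertices. You should rebuild Step 2 around this asymmetric replacement; your Steps 1 and 3 are then in the right spirit but need to be applied only to $R_a(v)$ rather than to all of $\family_v$.
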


\begin{proof}
Let
\[
Q_a(v) = \set{\lambda \in \U_a }{ \exists \, \smallface \in \U_b, \enspace \smallface \cap \lambda = \{v\} }
\]
and let
\begin{eqnarray*}
R_a(v)&=&
\set{\facetA \in \family_{a\overline b}}{ \exists \, \smallface \in \U_b \enspace \smallface\cap \facetA = \{v\}} \\
&=&
\set{\facetA \in \family_{a\overline b}}{ \facetA\cap \vertices(\lkcap{\C}{a}{b}) \in Q_a}.
\end{eqnarray*}
$Q_a(v)$ is a set of faces of $\lkcap{\C}{a}{b}$, while $R_a(v)$ is a set of facets of~$\C$.
While $R_a(v)$ can be empty, $Q_a(v)$ is not because~$v$ is essential.
Define $Q_b(v)$ and $R_b(v)$ similarly. W.l.o.g. assume $|R_a(v)| \geq |R_b(v)|$. The idea of the proof is to replace the facets in $R_b(v)$ by facets
in the link of~$a$ that are not in~$\family$ (and, in particular, not in $\family_{a\overline b}$).

By the missing edge exchange property of~$\C$, we have for each $\facetA \in R_a(v)$ that there exists a vertex $u$ such that $\facetA' = \facetA\setminus \{v\} \cup \{u\}$ is a facet of~$\C$ and $uv$ is not an edge in~$\C$. As $\facetA \in R_a(v)$ we have that there exists a $\smallface \in \U_b$ such that $\smallface\cap \facetA = \{v\}$. Since $uv$ is not an edge and $\smallface$ is a face containing~$v$, we have that $u\not\in \smallface$. Hence,  $\facetA' \cap \smallface = \emptyset$, in particular $\facetA'\cap \vertices(\lkcap{\C}{a}{b}) \notin \U_b^*=\U_a$, and $\facetA' \notin \family$.

Moreover, the only elements of~$\family$ that $\facetA'$ may not intersect are those in $R_b(v)$. To see this, assume that there exists a facet $\facetB \in \family$ such that $\facetA' \cap \facetB = \emptyset$. $\facetB \in \family_a$ is not possible since otherwise $\{a \} \subseteq \facetA' \cap \facetB \neq \emptyset$. If $\facetB \in \family_c$ for some $c \in \face \setminus \{a,b\}$, then, by assumption, $v\neq c$, $vc \not \in \C$ and thus $v \not \in \facetB$. It follows that $\facetB \cap \facetA \setminus \{v\} \neq \emptyset$ and thus $\facetB \cap \facetA' \neq \emptyset$, a contradiction. Hence, $\facetB \in \family_b$ which implies $\facetB \cap \facetA = \{ v\}$ and thus $\facetB \in R_b(v)$.

For every $\facetA\in R_a(v)$ choose any such $\facetA'$ and let $R'_a(v) = \set{\facetA' }{ \facetA \in R_a(v)}$. Notice that if $\facetA \neq \facetB$ then $\facetA' \neq \facetB'$. Otherwise assume $\facetA\neq \facetB$ but $\facetA'=\facetB'$, or, in other words, $\facetA' = \facetA\setminus \{v\} \cup \{u\} = \facetB \setminus \{v\} \cup \{w\} = \facetB'$, for $u \neq w$. In particular, $uv \in \facetB$ and $vw \in \facetA$, which is not possible since $uv, vw \not \in \C$.
Let $\family' = \family \setminus R_b(v) \cup R'_a(v)$. As $|R'_a(v)| = |R_a(v)|\ge|R_b(v)|$, $|\family'| \geq |\family|$. By the observations above $\family'$ is an intersecting family.

\medskip

In order to define $\U'$ consider the upper set $\U_b \setminus Q_b(v)$. By construction, $\family'_{b\bar a} |_{\lkcap{\C}{a}{b}} \subseteq \U_b \setminus Q_b(v)$. All minimal elements $\smallface$ of $\U_b$ having~$v$ as an element are not in $\U_b \setminus Q_b(v)$. Thus the minimal elements of $\U_b \setminus Q_b(v)$ are exactly the minimal elements of $\U_b$ that do not contain~$v$, and $E(\U_b \setminus Q_b(v)) \subseteq E(\U_b) \setminus \{v\}$. Now let
\[
\U' = \left( (\U_b \setminus Q_b(v))^*, (\U_b \setminus Q_b(v))^{**}  \right)
\]
By \Cref{prop:dual_pairs} we have that $\U'$ supports $\family'$ and $E(\U') = E((\U_b \setminus Q_b(v))^*) \subseteq E(\U_b \setminus Q_b(v)) \subseteq E(\U) \setminus \{v\}$.
\end{proof}

\begin{theorem}
\label{theorem_2_to_1_meep}
  Let~$\C$ be a complex with the missing edge exchange property and let~$\family$ be an intersecting family with base face $\face = ab$. Then there exists an intersecting family $\family'$ with $|\family'|\geq |\family|$ and base face either~$a$ or~$b$.
\end{theorem}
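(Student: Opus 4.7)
The plan is to iterate Lemma \ref{lemma_vertex_flip} until the essential vertex set of the supporting dual pair becomes empty, at which point one of $a$ or $b$ alone will serve as a base face.

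First I would start with any dual pair $\U^{(0)} = (\U_a, \U_b)$ supporting $\family$ at $ab$, for instance the canonical pair $\left({\family_{a\overline{b}}|_{\lkcap{\C}{a}{b}}}^{**}, {\family_{a\overline{b}}|_{\lkcap{\C}{a}{b}}}^{*}\right)$. Because $\face = ab$ contains no vertex other than $a$ and $b$, the auxiliary hypothesis ``there is no $c\in\face\setminus\{a,b\}$ with $cv\in\C$'' in Lemma \ref{lemma_vertex_flip} is vacuous; moreover any $v\in E(\U^{(i)})\subseteq \vertices(\lkcap{\C}{a}{b})$ automatically satisfies $v\notin\face$, since $a,b$ are not vertices of the link. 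Hence as long as $E(\U^{(i)})\neq\emptyset$ we may pick any $v$ in it and apply the lemma to obtain a new intersecting family $\family^{(i+1)}$ with $|\family^{(i+1)}|\geq|\family^{(i)}|$ and a new supporting dual pair $\U^{(i+1)}$ with $E(\U^{(i+1)})\subseteq E(\U^{(i)})\setminus\{v\}$. The construction inside the lemma only removes facets from $\family^{(i)}_b$ and adds facets containing $a$, so $ab$ remains a base face throughout.

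Since $|E(\U^{(i)})|$ is a strictly decreasing nonnegative integer, after finitely many steps we reach a dual pair $\U^{(k)}$ with $E(\U^{(k)})=\emptyset$. By Proposition \ref{prop:dual_pairs}\eqref{prop:dual_pairs.3}, neither $\U^{(k)}_a$ nor $\U^{(k)}_b$ can have any minimal element using a vertex, so each is either $\emptyset$ or the full complex $\lkcap{\C}{a}{b}$ (generated by the empty face); and since $\emptyset^*=\lkcap{\C}{a}{b}$ and $\lkcap{\C}{a}{b}^{\,*}=\emptyset$, exactly one of $\U_a^{(k)}$, $\U_b^{(k)}$ is empty. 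Without loss of generality, $\U_a^{(k)}=\emptyset$. Then the inclusion $\family^{(k)}_{a\overline{b}}|_{\lkcap{\C}{a}{b}}\subseteq\emptyset$ forces $\family^{(k)}_{a\overline{b}}=\emptyset$ (since any facet of $\family^{(k)}_{a\overline{b}}$ produces at least one restriction), so every facet of $\family^{(k)}$ containing $a$ must also contain $b$, and therefore $\family^{(k)}\subseteq \st_\C(b)$. Setting $\family'=\family^{(k)}$ completes the proof.

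The only delicate point is this final step: one must carefully distinguish the two upper sets with empty essential vertex set, namely $\emptyset$ and $\langle\emptyset\rangle=\lkcap{\C}{a}{b}$, in order to identify which side of the terminal dual pair vanishes and to translate this into the correct single-vertex base face. The iteration itself is essentially automatic, because the vacuous auxiliary hypothesis in Lemma \ref{lemma_vertex_flip} when $|\face|=2$ lets us flip any essential vertex at any stage without further obstruction.
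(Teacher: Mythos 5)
Your proof is correct and is essentially the same as the paper's: both iterate \Cref{lemma_vertex_flip} until the essential vertex set of the supporting dual pair is empty, then observe that one side must be $\emptyset$ and the other the full link, forcing $\family_{a\overline b}$ or $\family_{b\overline a}$ to vanish. You have also spelled out a few points the paper leaves implicit — that the side condition on $\face\setminus\{a,b\}$ in the lemma is vacuous when $|\face|=2$, and that the replacement in the lemma keeps $ab$ a base face — which is a welcome but not a structurally different contribution.
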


\begin{proof}
  Since $|\face|=2$, we can iteratively apply \Cref{lemma_vertex_flip} to obtain larger families supported in dual pairs with fewer essential vertices. This process necessarily terminates with an intersecting family supported in a dual pair without essential vertices. In particular, one of $\U_a$ and $\U_b$ must be empty and the other one must be equal to $\link_\C({a,b})$. If, w.l.o.g., $\U_a$ is empty, then $\family_{a\overline b}$ is empty, and $b$ on its own is a base face.
\end{proof}

\begin{corollary}
\label{corollary_2_to_1}
  Let~$\C^{(r)}$ be the $r$-skeleton of a flag pure complex without boundary $\C$ and let~$\family$ be an intersecting family with base face $\face = ab$. Then there exists an intersecting family $\family'$ with $|\family'|\geq |\family|$ and base face either~$a$ or~$b$.
\end{corollary}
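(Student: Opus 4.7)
The plan is to deduce this corollary directly from Theorem~\ref{theorem_2_to_1_meep} by showing that the hypothesis of the theorem is satisfied by $\C^{(r)}$. In other words, the strategy is to verify that the $r$-skeleton of a flag pure complex without boundary has the missing edge exchange property, and then invoke the theorem. No new machinery is needed beyond what has already been established.

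Concretely, I would proceed in two short steps. First, apply Proposition~\ref{prop:meep} to the ambient complex $\C$: since $\C$ is flag, pure, and without boundary, it has the missing edge exchange property. Second, apply Proposition~\ref{prop:meep2} to conclude that the $r$-skeleton $\C^{(r)}$ also has the missing edge exchange property (noting that $\C^{(r)}$ is itself pure, because in a pure $d$-complex every face of dimension $\le r$ extends to a facet and hence to an $r$-face). With this in hand, Theorem~\ref{theorem_2_to_1_meep} applied to $\C^{(r)}$ yields, for any intersecting family $\family$ of facets of $\C^{(r)}$ with base face $ab$, an intersecting family $\family'$ with $|\family'|\ge|\family|$ whose base face is either $a$ or $b$.

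I do not anticipate any substantive obstacles here: the corollary is essentially a repackaging of Theorem~\ref{theorem_2_to_1_meep} in the language of skeleta, and the two propositions that bridge the gap have already been proved. The only minor point requiring care is to observe the edge case $r=d$, in which $\C^{(r)}=\C$ and the statement follows immediately from Proposition~\ref{prop:meep} and Theorem~\ref{theorem_2_to_1_meep}, and to note that in the strict case $r<d$ the missing edge exchange property for $\C^{(r)}$ comes from exchanging a vertex at the level of a facet of $\C$ and restricting the exchange to the $r$-skeleton, exactly as carried out in the proof of Proposition~\ref{prop:meep2}.
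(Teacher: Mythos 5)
Your proposal is exactly the paper's proof: invoke Proposition~\ref{prop:meep} to get the missing edge exchange property for $\C$, Proposition~\ref{prop:meep2} to transfer it to $\C^{(r)}$, and then apply Theorem~\ref{theorem_2_to_1_meep}. The extra remarks on purity of the skeleton and the edge case $r=d$ are harmless but not needed, since both are immediate.
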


\begin{proof}
  This is a direct consequence of \Cref{theorem_2_to_1_meep} combined with \Cref{prop:meep,prop:meep2}.
\end{proof}

Theorem~\ref{thm:main2} is the case $r=\dim(\C)$ (that is, $\C=\C^{(r)}$) of \Cref{corollary_2_to_1}.

\subsection{The $2$-dimensional case} We are now in the position to give a short proof of \Cref{conj:no_free_ridges} in dimension two. 

\begin{theorem}
  \label{thm:dim2}
  Let~$\C$ be a flag pure~$2$-complex without boundary edges.
  Then~$\C$ is pure-EKR.
\end{theorem}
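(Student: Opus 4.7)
The plan is to reduce \Cref{thm:dim2} to \Cref{thm:main2} (i.e.\ \Cref{theorem_2_to_1_meep}) by producing, from any intersecting family $\family$, a possibly modified intersecting family of at least the same size with a base face of size at most two. Starting from any facet $abc \in \family$---which is automatically a base face of size three by the intersecting property---I would take a dual pair $\U = (\U_a, \U_b)$ supporting $\family$ at $ab$ and iteratively apply \Cref{lemma_vertex_flip}. Its hypothesis is available for each essential vertex $v \in E(\U)$ with $v \neq c$ and $vc \notin \C$.

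Write $N(x) := \vertices(\link_\C(x))$ for the set of vertices of $\C$ adjacent to $x$. The decisive structural input from flagness and two-dimensionality is the identity
\[
N(a) \cap N(b) \cap N(c) = \emptyset
\]
whenever $abc \in \C$ is a triangle: any common neighbor $v$ would yield a set $\{a,b,c,v\}$ in which all six pairs are edges of $\C$, forcing $\{a,b,c,v\} \in \C$ by flagness and contradicting $\dim \C = 2$. Since $E(\U) \subseteq \vertices(\lkcap{\C}{a}{b}) \subseteq N(a) \cap N(b)$, every essential vertex different from $c$ satisfies the hypothesis of \Cref{lemma_vertex_flip}. The iteration therefore terminates at an intersecting family $\family'$ of size at least $|\family|$, supported by a dual pair $\U'$ with $E(\U') \subseteq \{c\}$.

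The argument concludes with a short case analysis. If $E(\U') = \emptyset$, then, up to swapping $a$ and $b$, $\U'_a = \emptyset$, forcing $\family'_{a\overline b} = \emptyset$, so $\family' \subseteq \st(b) \cup \st(c)$ and \Cref{theorem_2_to_1_meep} applies to the base face $bc$. If $E(\U') = \{c\}$, then $\U'_a = \U'_b = \langle c\rangle$, which forces every facet of $\family'_{a\overline b} \cup \family'_{b\overline a}$ to contain $c$. The crux of this case---the step I expect to be the main obstacle---is ruling out the simultaneous existence of a facet $abx \in \family'$ with $x \neq c$ and a facet $F \in \family'$ containing $c$ but neither $a$ nor $b$: their required intersection forces $x \in F$, hence $cx \in \C$, which places $x$ in the empty set $N(a) \cap N(b) \cap N(c)$, a contradiction. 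Therefore one of these two subfamilies is empty, so either $\family' \subseteq \st(a) \cup \st(b)$ (in which case \Cref{theorem_2_to_1_meep} applies at $ab$) or $\family' \subseteq \st(c)$ directly. In every case $|\family| \leq |\family'| \leq |\st_\C(v)|$ for some vertex $v$, as desired.
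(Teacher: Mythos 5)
Your proof is correct, but it takes a genuinely different route from the paper's. The paper first passes to a \emph{minimal} base face $\face$, applies \Cref{corollary_2_to_1} when $|\face|=2$, and handles $|\face|=3$ by a short ad hoc argument: minimality forces three facets $\smallface_a,\smallface_b,\smallface_c$ meeting $\face=abc$ only at $a$, $b$, $c$; flagness then pins them to the configuration $auv$, $bvw$, $cuw$ and shows $\family\subseteq\{\face,\smallface_a,\smallface_b,\smallface_c\}$, so $|\family|\le 4$, which is at most the size of any vertex star in a boundaryless pure flag $2$-complex. You instead start from an arbitrary facet $abc\in\family$ (no minimality needed), run the full \Cref{lemma_vertex_flip} iteration on a dual pair at $ab$, and use the clean structural fact $N(a)\cap N(b)\cap N(c)=\emptyset$ both to verify the lemma's hypothesis for every essential vertex $v\ne c$ and to resolve the terminal case $E(\U')=\{c\}$. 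Your approach buys uniformity with the machinery already developed in \Cref{sec:setting} and is closer in spirit to the paper's own $3$-dimensional argument (\Cref{lemma:3to2}, \Cref{lemma:4to3}), but it forgoes the explicit bound $|\family|\le 4$ that the paper's direct argument delivers. Both are sound; the paper's version is shorter once minimality is in hand, while yours avoids the extra ``minimal base face'' step and makes the dependence on the vertex-flip lemma and the triangle-neighborhood identity completely explicit.
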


 \begin{proof}
  Let~$\family$ be an intersecting family of facets of~$\C$ and $\face$ a minimal base face of~$\family$.
  We aim to show that there exists an intersecting family~$\family'$ with $|\family'| \geq |\family|$ together with a base face that is a vertex. Thus, assume that $\sigma$ is not a vertex.

  If $\face$ is an edge we can apply \Cref{corollary_2_to_1}. Hence, assume $\face$ is a triangle, say $\face = abc$.

  By minimality of $\face$ as a base face, there exist triangles $\smallface_a, \smallface_b, \smallface_c \in \family$ with $\face\cap \smallface_a = a$, $\face\cap \smallface_b = b$, and $\face\cap \smallface_c = c$.
  Since~$\C$ is flag, $\smallface_a$, $\smallface_b$ and $\smallface_c$ cannot intersect in a common vertex. Since they must intersect pairwise, we conclude that $\smallface_a = auv$, $\smallface_b = bvw$, and $\smallface_c = cuw$ for vertices $u$,~$v$, and $w$.
  $\face$ may or may not belong to~$\family$, but we claim that~$\family$ is contained in $\{ \face, \smallface_a, \smallface_b, \smallface_c \}$. Indeed, if this is not the case then any additional triangle $\smallface \in \family$ must intersect $\face$ in an edge or in a vertex since $\sigma$ is a base face, but both things are impossible:
  \begin{itemize}
  \item If $\smallface$ intersects $\face$ in an edge, say $ab$, then the remaining vertex of $\smallface$ must be in $\smallface_c$ (since $\smallface$ and $\smallface_c$ meet). Hence, assume $\smallface=abu$. Then $buvw$ is a $4$-clique and thus contradicts the flagness of~$\C$.
  \item If $\smallface$ intersects $\face$ in a vertex, say~$a$, then for $\smallface$ to intersect both $\smallface_b$ and $\smallface_c$
  either $w\in \smallface$ (producing the $4$-clique $auvw$) or $\smallface = auv =\smallface_a$.
  \end{itemize}
  Altogether, $|\family|\leq 4$ and the statement follows by the fact that the star of any vertex in a flag~$2$-complex without boundary must be of size at least four.
\end{proof}

\section{The $3$-dimensional case}
\label{sec:three}

The following is a slightly more precise version of~\Cref{thm:main}. 

\begin{theorem}
\label{theorem:3dim}
  Let~$\C$ be a flag pure $3$-complex without boundary triangles.
  Let~$\family$ be an intersecting family of facets and let $\face\in \C$ be a base face of minimal size for~$\family$.
  Then, either $|\face|=1$ or there exists an intersecting family $\family'$ with $|\family'| \ge |\family|$ and with a base face $\face'$ properly contained in $\face$.
\end{theorem}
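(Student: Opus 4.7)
The plan is to split by the size of the minimal base face $\face$, which for a pure $3$-complex must satisfy $|\face|\in\{1,2,3,4\}$. The case $|\face|=1$ is excluded by hypothesis, so only three cases remain. Throughout, the main tool is the dual-pair/vertex-flip machinery from \Cref{sec:setting}, together with flagness of $\C$.

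The case $|\face|=2$ is immediate from \Cref{corollary_2_to_1}: since $\C$ is flag, pure and without boundary, it has the missing edge exchange property (\Cref{prop:meep}), so \Cref{theorem_2_to_1_meep} directly provides an $\family'$ with $|\family'|\ge|\family|$ and base face of size $1$, contained in $\face$.

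For $|\face|=3$, write $\face=abc$ and consider a dual pair $\U=(\U_a,\U_b)$ in $\lkcap{\C}{a}{b}$ supporting $\family$ at $ab$. By minimality of $\face$, neither $\family_{a\overline b}$ nor $\family_{b\overline a}$ is contained in the star of $c$, so the problem is to trade facets at $ab$ away and conclude that $\family$ can be replaced by a family supported on a proper subset of $\face$. The strategy is to apply \Cref{lemma_vertex_flip} to eliminate every essential vertex $v\in E(\U)$ not in $\face$ and not adjacent to $c$; after this pruning, $E(\U)\subseteq\{c\}\cup N_\C(c)$. Then by \Cref{ex:dual3}, the surviving dual pair is one of a short list of combinatorial types (generated by vertices/edges on each side), and flagness forbids the presence of $4$-cliques involving $a,b,c$ and any pair of essential vertices. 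A case-by-case analysis of these configurations either produces an $\family'$ whose base face is an edge of $\face$, or shrinks the essential vertex set further by symmetry (repeating the argument at $bc$ or $ac$).

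For $|\face|=4$, write $\face=abcd$. The same idea is applied in turn to the three dual pairs $(\U_a,\U_b)$, $(\U_a,\U_c)$, $(\U_a,\U_d)$, etc., at the six edges of $\face$. \Cref{lemma_vertex_flip} at the edge $ab$ now requires the flipped vertex $v$ to be non-adjacent to both $c$ and $d$; after pruning at every edge we may assume each essential vertex is adjacent to the two remaining vertices of $\face$. Flagness then becomes decisive: if $v\in E(\U)$ at $ab$ is adjacent to both $c$ and $d$, then together with any $u\in E(\U)$ on the other side it would create a $4$- or $5$-clique with $\{a,b,c,d\}$, contradicting that $\face$ is itself a facet (a clique of four) unless $u=v$ or specific small configurations occur. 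Enumerating these configurations with the classification of \Cref{ex:dual3} shows that $\family$ can always be exchanged into an intersecting family of at least the same size supported on some triangle of $\face$.

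The main obstacle will be the $|\face|=4$ case. Because \Cref{lemma_vertex_flip} requires avoidance of \emph{two} ``other'' vertices of $\face$ at each edge, it alone is not strong enough, and one must combine flips at several edges of $\face$ and use flagness to rule out the remaining dual-pair types. Controlling that the successive modifications of $\family$ do not interfere with one another, and that the final $\family'$ genuinely lies in the star of a triangle of $\face$, is the subtle bookkeeping part of the argument; the $|\face|=3$ case can be viewed as a simplified model of this reasoning.
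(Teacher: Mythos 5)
Your overall strategy is the same as the paper's: split on $|\face|\in\{2,3,4\}$, dispatch $|\face|=2$ via \Cref{corollary_2_to_1}, and attack $|\face|\in\{3,4\}$ by eliminating essential vertices of a supporting dual pair via \Cref{lemma_vertex_flip} and then case-analysing the short list of dual-pair types. The $|\face|=2$ case is handled correctly. However, for $|\face|\in\{3,4\}$ you stop at a plan, and the plan as stated has genuine gaps that the paper spends most of Section~3 filling.

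The first gap is at the pruning step. After flipping away all essential vertices that are not in $\face$ and not adjacent to every remaining vertex of $\face$, the essential vertices that survive are exactly those lying in the link of the whole of $\face$ (the ``trisector'' for $|\face|=3$, and its analogues for $|\face|=4$). At that point \Cref{lemma_vertex_flip} is no longer applicable, and your appeal to ``flagness forbids $4$- or $5$-cliques'' does not finish the argument: a vertex $v\in\link_\C(abc)$ does create a $4$-clique $abcv$, but that is allowed in a $3$-complex, so flagness gives you nothing by itself. What flagness \emph{does} give is that two distinct vertices $u,v$ in the same trisector cannot span an edge, and the paper leverages exactly this to pin down the possible $\family_c^\circ$ and then perform an explicit swap (replacing $cuwx$ by $abvy$). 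That concrete exchange, not a further round of vertex flips, is what closes the type-I subcase when $v$ is in the trisector; your sketch omits it.

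The second gap is in the $|\face|=4$ case, which you correctly flag as the hard one but leave unresolved. The paper first shows (using absence of boundary) that each $\family_x^\circ$ may be assumed to have at least two facets, then treats the three intersection types. Type~I reduces to a contradiction with $|\family_c^\circ|\ge 2$; type~II is shown to force type~I at a different pair; and type~III is the real work: one shows $\family$ is forced to be a specific $9$-element family whose edges span a complete $4$-partite graph with parts of size~$3$, and then invokes \Cref{thm:HST05} to conclude. Nothing in your proposal anticipates that in the hardest configuration the conclusion is obtained not by finding a smaller base face inside $\face$ via flips, but by showing the family is small compared to a vertex star of a subcomplex. Without this, the proposal does not prove the statement. (A smaller issue: you cite \Cref{ex:dual3}, which enumerates dual pairs generated by vertices and edges in arbitrary complexes, whereas what is actually used is the classification in \emph{flag $1$-complexes} -- the link of an edge -- namely \Cref{lem:1dflag}; and the outerlink/sector machinery of \Cref{ssec:sectors}, which organizes which essential vertices can and cannot be flipped, is absent from your sketch.)
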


To see that~\Cref{theorem:3dim} implies~\Cref{thm:main} observe that 
given a flag pure $3$-complex~$\C$ without boundary triangles, and given an intersecting family~$\family$ of~$\C$ with base face $\face$, we can iteratively apply \Cref{theorem:3dim} until we arrive at a family $\family'$ contained in a vertex star of~$\C$, such that $|\family'|  \geq |  \family|$.

The proof of~\Cref{theorem:3dim} considers the three possibilities for the size of $\face$ separately. Size two is settled by \Cref{corollary_2_to_1} and sizes three and four are handled in \Cref{lemma:3to2,lemma:4to3}, in~\Cref{ssec:3to2,ssec:4to3} below. 
Before presenting their proofs we need to introduce some additional machinery which occupies the whole of \Cref{ssec:sectors,ssec:flag1cmplxs}. 

\subsection{The outerlink and its sectors}
\label{ssec:sectors}

Given a pure simplicial complex~$\C$ and a face $\face \in \C$, we denote by $\C/\face$ the contraction of $\face$ in~$\C$. In general this is not a simplicial complex, but rather a CW-complex  and also a quotient space of~$\C$ in the topological sense. It converts each individual facet in~$\C$ whose intersection $\face' $ with $\face$ is $\ell$-dimensional ($\ell\ge 0$)  into a $(k-\ell)$-simplex $(\face \setminus \face') \cup \{ v\}$, where~$v$ is a symbol denoting a new vertex of $\C/\face$ that replaces the whole face $\face$.
The glueings between simplices in $\C / \face$ are the ones coming from~$\C$, and faces not meeting $\face$ are unaffected.

\medskip
The reason why $\C/\face$ may not be a simplicial complex is that
 if $\face'$ and $\face''$ are two facets with $\face \setminus \face' = \face \setminus \face''$ then in $\C/\face$ they produce two different simplices with the same vertex set.
 (E.g., if in a simple graph we contract an edge which is part of a three cycle, the other two edges in the cycle are still two distinct edges now with the same end-points, and the contracted graph is not simple anymore).
 One advantage of flag complexes is that, for them, this cannot happen:

\begin{lemma}
  \label{lem:contract}
  A simplicial complex~$\C$ is flag if and only if $\C / \face$ is a simplicial complex for every face $\face$ of~$\C$.
\end{lemma}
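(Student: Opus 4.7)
The plan is to split the biconditional into its two implications.

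For the forward direction, my approach is to show that if $\C$ is flag then the only way two distinct faces $\face_1, \face_2 \in \C$ can collapse to the same vertex set in $\C/\face$ is harmless, since a unique common maximal preimage always exists in $\C$. Writing $\mu := \face_i \setminus \face$ (which is forced to be common to both) and $\sigma_i := \face_i \cap \face$, the obstruction reduces to having distinct non-empty $\sigma_1, \sigma_2 \subseteq \face$. The key step is to verify that $\mu \cup \sigma_1 \cup \sigma_2$ is a clique in $\C$: pairs inside $\mu$ are edges because $\mu \subseteq \face_1 \in \C$, pairs inside $\sigma_1 \cup \sigma_2 \subseteq \face$ are edges because $\face \in \C$, and a mixed pair with one vertex in $\mu$ and the other in $\sigma_i$ lies inside $\face_i \in \C$. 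Flagness then forces $\mu \cup \sigma_1 \cup \sigma_2 \in \C$, and iterating the same argument over all preimages of the given quotient cell produces a unique maximal preimage, so that $\C/\face$ does carry a well-defined simplicial structure.

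For the reverse direction I would argue by contrapositive. If $\C$ is not flag there is a minimal non-face $K$ with $|K| \geq 3$; pick any two distinct vertices $v_1, v_2 \in K$ and contract $\face := \{v_1, v_2\}$, which lies in $\C$ by the minimality of $K$. The two faces $K \setminus \{v_1\}$ and $K \setminus \{v_2\}$ are then distinct elements of $\C$, incomparable (they differ precisely on $v_1$ and $v_2$), and both collapse to the vertex set $(K \setminus \face) \cup \{v\}$ in $\C/\face$. Any face of $\C$ enclosing both would have to contain $\{v_1,v_2\} \cup (K \setminus \face) = K$, which is not in $\C$; this explicit incomparable pair exhibits the failure of $\C/\face$ to be a simplicial complex.

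The main obstacle I anticipate is conceptual rather than computational: it lies in pinning down the precise sense in which the CW-style quotient $\C/\face$ should count as ``a simplicial complex'', since in general several distinct faces of $\C$ collapse to cells with the same vertex set in the quotient. The right reading is that such collisions are benign exactly when the colliding preimages all lie inside a single face of $\C$ that also maps to the same quotient cell. Once this viewpoint is fixed, the forward implication comes down to the one clique-to-face step under flagness, while the reverse one is witnessed by the incomparable pair $K \setminus \{v_1\}, K \setminus \{v_2\}$ coming from the minimal non-face.
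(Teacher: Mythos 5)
Your proof is correct and takes essentially the same route as the paper: the forward direction rests on the identical observation that two preimages of a single quotient cell span a clique (so flagness forces a common superface, making the collision harmless), and the reverse direction contracts an edge inside a minimal non-face of size at least three to exhibit two incomparable preimages of the same quotient cell, just as the paper does. The only cosmetic difference is that the paper presents the forward implication in contrapositive form while you argue it directly and spell out the "unique maximal preimage" point a bit more explicitly.
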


\begin{proof}
  First, assume that $\C/\face$ is not simplicial.
  That is, there exist two faces~$\face_1$ and~$\face_2$ of $\C/\face$ with the same vertex set.
  Let $\tilde{\face}_1$ and $\tilde{\face}_2$ be faces in~$\C$ contracting to them.
  Let $\smallface_1 = \left(\tilde{\face}_1 \cap \face \right) \setminus \tilde{\face}_2$, $\smallface_2 = \left(\tilde{\face}_2 \cap \face \right) \setminus \tilde{\face}_1$ and $\smallface_3 = \tilde{\face}_1 \setminus \face  = \tilde{\face}_2 \setminus \face$.
  Then $\smallface_1 \cup \smallface_2 \cup \smallface_3 = \tilde{\face}_1 \cup \tilde{\face}_2$ is a clique but not a face in~$\C$, implying that~$\C$ is not flag.

  On the other hand, if there exists a minimal non-face of dimension at least~$2$, then contracting any of its proper faces, e.g. an edge, yields a CW-complex that is not simplicial.
\end{proof}

\begin{definition}
  \label{def:outerlink}
  Let $\face$ be a face in a flag complex~$\C$.
  The \defn{outerlink} of $\face$ in~$\C$ is given by all faces~$\face'$ of~$\C$ such that $\face\cap\face'=\emptyset$ and $\face'\cup v\in \C$ for some $v\in \face$.
\end{definition}

\begin{remark}
Observe that since $\C$ is flag, the outerlink of $\face$ is equal to the link of (the vertex corresponding to) $\face$ in the simplicial complex $\C/\face$.
  In fact, an alternative definition of $\C/\face$ for a flag complex~$\C$ is that it equals the union of the complement of $\face$ and the pyramid $S*\{v\}$, where $S$ is the outerlink of $\face$ in~$\C$ and $v$ is a new vertex.

  For a flag~$d$-manifold, the outerlink of every face~$\face$ is a $(d-1)$-sphere: It is the link of a vertex in the manifold $\C/\face$, which is homeomorphic to~$\C$.
  Even more, the complement $\C \setminus \face$ of $\face$ in~$\C$ is a~$d$-manifold with boundary, and its boundary is equal to the outerlink of $\face$ in~$\C$.

\end{remark}

\begin{definition}
  \label{def:sectors}
  Let~$\C$ be a flag pure~$d$-complex, let $\face \in \C$ be one of its faces, and let $S=\link_{\C/\face} (\face)$ be the outerlink of $\face$.
  For each vertex $a\in \face$ we define the \emph{sector} of~$a$ in $S$ to be the subcomplex of $S$ given by
  \[
    S_a :=\link_\C(a) \setminus \face.
  \]
\end{definition}

Throughout the rest of this section we use the notation~$S$ for the outerlink of a face $\face$,~$S_a$ for the sector corresponding to a vertex $a\in \face$, and $S_\smallface := \cap_{a\in \smallface} S_a$ for any intersection of sectors ($\smallface\subset \face$). If $|\smallface|$ equals two or three we call $S_\smallface$ a \defn{bisector} or \defn{trisector}, respectively.

\begin{lemma}
  \label{lem:induced}
Let~$\C$ be a flag pure $d$-complex, let $\face \in \C$ be one of its faces, and let $S=\link_{\C/\face} (\face)$ be the outerlink of $\face$. Then the following two statements hold.
\begin{enumerate}
\item Sectors and their intersections are flag induced subcomplexes of $\C$.

\item If $\C$ is without boundary then the intersection of any $k$ sectors is pure of dimension $d-k$. In particular, $S$ is pure of dimension $d-1$ and each facet of $S$ belongs to exactly one sector.
\end{enumerate}
\end{lemma}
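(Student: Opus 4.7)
The approach is to give an explicit combinatorial description of $S_\smallface$ as an induced subcomplex of $\C$, and then to use the missing edge exchange property (available via \Cref{prop:meep} since $\C$ is flag and without boundary) to establish purity.

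For part (1), I would first establish the identity
\[
  S_\smallface \;=\; \{\tau\in\C : \tau\cap\face=\emptyset,\ \tau\cup\smallface\in\C\},
\]
that is, $S_\smallface = \link_\C(\smallface)\setminus(\face\setminus\smallface)$. The only content here is that the conjunction of $\tau\cup\{a\}\in\C$ over all $a\in\smallface$ upgrades to $\tau\cup\smallface\in\C$; this uses flagness, by checking that all pairs in $\tau\cup\smallface$ are edges (pairs inside $\tau$ and inside $\smallface$ are already handled, and the mixed pairs come from the hypothesis $\tau\cup\{a\}\in\C$). This formula immediately identifies $S_\smallface$ as the induced subcomplex of $\C$ on the vertex set $\vertices(\link_\C(\smallface))\setminus\vertices(\face)$, and induced subcomplexes of flag complexes are flag.

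For part (2), I would first observe that $\link_\C(\smallface)$ is pure of dimension $d-k$, since its facets are in bijection with facets of $\C$ containing $\smallface$. For the purity of $S_\smallface$ itself, I would argue by contradiction: suppose $F$ is a facet of $S_\smallface$ with $|F|<d-k+1$, and let $G$ be a facet of $\C$ with $F\cup\smallface\subseteq G$. Maximality of $F$ in $S_\smallface$ forces every vertex of $G\setminus(F\cup\smallface)$ to lie in $\face\setminus\smallface$ (a vertex outside $\face$ could be adjoined to $F$ inside $S_\smallface$). Pick some such $w$ and apply the missing edge exchange property to the ridge $G\setminus\{w\}$, obtaining $v\ne w$ with $G':=(G\setminus\{w\})\cup\{v\}$ a facet and $vw\notin\C$. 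The decisive observation is that $v\notin\face$, because otherwise $v$ and $w$ would both be vertices of the face $\face$ and $vw$ would be an edge of $\C$. But then $F\cup\{v\}$ is disjoint from $\face$ and satisfies $F\cup\{v\}\cup\smallface\subseteq G'\in\C$, so it is a strictly larger face of $S_\smallface$, contradicting maximality.

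For the ``in particular'' clause, the definition of the outerlink gives $S=\bigcup_{a\in\face}S_a$, so every face of $S$ extends to a face of size $d$ by the purity just established. Such a size-$d$ face $F$ cannot lie in two sectors $S_a$ and $S_b$ with $a\ne b\in\face$: otherwise $F\cup\{a\}, F\cup\{b\}\in\C$ together with the edge $ab$ of $\face$ would, by flagness, force the $(d+2)$-element clique $F\cup\{a,b\}$ to be a face, contradicting $\dim\C=d$. The same argument simultaneously shows that these size-$d$ faces are facets of $S$, so $S$ is itself pure of dimension $d-1$. The main subtlety in the whole argument is engineering the MEEP exchange in part (2) so that the replacement vertex $v$ falls \emph{outside} $\face$; this is exactly what the non-edge conclusion of MEEP, combined with the clique structure of $\face$, provides.
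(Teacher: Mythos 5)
Your proof is correct and takes essentially the same approach as the paper's: part~(1) establishes $S_\smallface$ as the induced subcomplex on $\vertices(\link_\C(\smallface))\setminus\vertices(\face)$ via flagness, and part~(2) applies the missing edge exchange property to a vertex of $\face\setminus\smallface$ in a facet containing $\rho\cup\smallface$, using that the replacement vertex cannot land back in the clique $\face$. The only cosmetic difference is that you spell out the ``in particular'' clause (partition of facets of $S$ into sectors) in more detail than the paper does.
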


\begin{proof}
Let $\smallface\subset \face$ and consider the intersection of sectors $S_\smallface$.

For part (1), assume $\C$ is flag and let us show that $S_\smallface$ is induced, which implies it is flag. If a face $\rho \in \C$ has all vertices in $S_{\smallface}$ then $\rho \cup \smallface$ is a clique in $\C$: $\rho$ and $\tau$ are faces, hence cliques, and for every pair of vertices $a\in\tau$, $b \in \rho$, $ab$ spans an edge since $b$ is a vertex in $S_a$. By the flagness of $\C$ we then have $\rho \cup \smallface \in \C$ and, thus, $\rho \in S_{\smallface}$.

  For part (2) let $k=|\smallface|$ and assume $\C$ is without boundary. Equivalently, by \Cref{prop:meep}, it has the missing edge exchange property.
  The intersection $S_\smallface$ is at most $(d-k)$-dimensional because it is part of the link of~$\smallface$. 

  Now let $\rho$ be a facet of $S_{\smallface}$. 
	By part (1) $\rho\cup \smallface \in \C$ and so it is contained in a facet $\facetA$ of $\C$. 
	By maximality of $\rho$, $\facetA\setminus \face = \rho$. If there exists $v\in \facetA\cap(\face\setminus\smallface)$ then, by the missing edge exchange property, there exists $u$ such that $\facetB = (\facetA\setminus v)\cup u$ is a facet and
$uv\notin \C$; in particular $u\notin\face$. 
	But $\facetB \supseteq \rho\cup\smallface\cup u$, so $\rho\cup u \in S_{\smallface}$ which is a contradiction to the maximality of $\rho$. 
	Then $\facetA = \rho\cup\smallface$ which means $|\rho| = d-k+1$, and $S_{\smallface}$ is pure of dimension $d-k$. 
	In particular $S$ is pure of dimension $d-1$ with its facets partitioned into sectors.	
 \end{proof}

\subsection{A classification of dual pairs in flag 1-complexes}
\label{ssec:flag1cmplxs}

In this section we establish a classification of dual pairs of flag $1$-dimensional complexes which forms the base of the proof of \Cref{theorem:3dim}. Namely, the classification allows us to perform an induction on the number of essential vertices, as in \Cref{corollary_2_to_1}, separately for base faces of sizes three and four.

\begin{proposition}
\label{lem:1dflag}
  Let~$\D$ be a flag $1$-dimensional complex.
  Then every dual pair in~$\D$ is isomorphic to one of the following types:
  \[
    (\emptyset, \D),\qquad (\langle v\rangle, \langle v\rangle), \qquad
    (\langle u,v\rangle, \langle uv\rangle) \qquad (\langle uv,wx\rangle,\langle vw,ux\rangle).
  \]
\end{proposition}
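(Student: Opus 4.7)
The plan is to classify non-trivial dual pairs by the structure of their minimal elements $M_i := \min(\U_i)$. Since $\D$ is $1$-dimensional, every face is a vertex or an edge, and flagness is equivalent to $\D$ being triangle-free (a $3$-clique would be a non-face of dimension $\geq 2$). The empty/total case immediately gives the trivial dual pair, so I will assume $\U_1,\U_2 \neq \emptyset$, and, up to swapping, let $k$ be the number of vertex elements in $M_1$, chosen maximal among both sides.

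I would first dispose of $k \geq 1$. If $k \geq 3$, no face of $\D$ can contain three prescribed vertices, so $\U_2=\emptyset$, a contradiction. If $k=2$ with $\{u,v\}\subseteq M_1$, the only face of $\D$ containing both $u$ and $v$ is the edge $uv$, so $uv\in\D$ and $\U_2=\langle uv\rangle$; any extra edge in $M_1$ would be disjoint from $\{u,v\}$ by minimality and hence uncovered by $\U_2$, forcing $M_1=\{u,v\}$ and producing the third listed type. If $k=1$ with $M_1=\{v\}\cup\{e_1,\dots,e_m\}$ (where $v\notin e_i$), any minimal $\sigma\in\U_2$ has the form $vz$ with $z$ lying in every $e_i$, so the $e_i$ share a common endpoint $u$, $\U_2=\langle vu\rangle$, and $\U_2^*=\langle v,u\rangle$; this contradicts the minimality of the $e_i$ unless $m=0$, recovering the second listed type.

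The work lies in $k=0$: $M_1$ consists entirely of edges, and by the maximality of $k$ so does $M_2$. Flagness forces $M_1$ to be a matching: if $e_1=uv$ and $e_2=uw$ shared a vertex $u$, every $\sigma\in\U_2$ meeting both would have to contain $u$ (the alternative $\{v,w\}$ is not an edge, else $\{u,v,w\}$ would be a forbidden triangle), so $\{u\}\in\U_2^*=\U_1$, violating the minimality of $e_1$. A two-endpoints-versus-disjoint-edges pigeonhole then gives $|M_1|\leq 2$, since each $f\in M_2$ must meet every edge of the matching $M_1$; the subcase $|M_1|=1$ produces $M_2=\{u,v\}$ and falls back into $k=2$, so one is left with $|M_1|=2$, say $M_1=\{uv,wx\}$ on four distinct vertices.

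In this final configuration $M_2$ is also a two-edge matching joining $\{u,v\}$ to $\{w,x\}$, so $M_2\subseteq\{uw,ux,vw,vx\}$. Triangle-freeness rules out all but the two ``crossings'': if $uw,ux\in\D$, then together with $wx$ they form the triangle $uwx$, and the three analogous non-crossing pairs are eliminated likewise. The admissible options are $\{uw,vx\}$ and $\{ux,vw\}$, and either, after relabeling, yields the fourth listed pair $(\langle uv,wx\rangle,\langle vw,ux\rangle)$. The main obstacle is this $k=0$ matching case, where flagness must be applied twice---once to force $M_1$ to be a matching, and again to force the $4$-cycle structure of the associated cross-edges.
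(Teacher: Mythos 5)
Your proof is correct and follows essentially the same route as the paper's: classify non-trivial dual pairs by the structure of their minimal elements (singletons versus edges), then use flagness (triangle-freeness) and minimality to pin down the possible configurations, as the paper does in its three cases (one side empty, one side contains a singleton, neither side contains a singleton). One point worth tightening is the $k=1$, $m=1$ subcase, where you assert $\U_2 = \langle vu\rangle$ but have not ruled out $\U_2 = \langle vz_1, vz_2\rangle$ with $e_1 = z_1z_2$; this configuration is excluded by flagness (it would create the triangle $vz_1z_2$), so the conclusion stands, but the argument should invoke flagness explicitly at this step rather than only in the $k=0$ case.
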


\begin{proof}
Let $\U = (\U_1,\U_2)$ be a dual pair. If one of $\U_1$ or $\U_2$ is empty, then $\U$ is isomorphic to $(\emptyset, \D)$.

Suppose that $\U_1$ contains a singleton~$v$. If $\U_1 = \langle v \rangle$ or~$v$ is also a singleton in $\U_2$, then $\U_1 = \U_2 = \langle v \rangle$. Hence, assume that~$v$ is not a singleton in $\U_2$. This implies that there exists an element $\smallface \in \U_1$ not containing~$v$, and a generating edge $uv \in \U_2$. Since $\U_1 = \U_2^*$, $\smallface \cap uv \neq \emptyset$ and thus $\smallface \cap uv = u$ which implies $u \in \U_1$. Since $(\langle u,v\rangle,\langle uv \rangle)$ is a dual pair, and, by \Cref{prop:dual_pairs}, dual pairs cannot be properly contained in larger dual pairs, we are done.

It remains to assume that neither $\U_1$ nor $\U_2$ contains a singleton. Let $uv$ be an edge in $\U_1$. Then $\U_2$ must contain edges $ux$ and $vw$ for some vertices $x$ and $w$ with $\{x,w\} \cap \{u,v\} = \emptyset$.
Since~$\D$ does not have $3$-cycles, $uw$ and $vx$ are not in~$\D$ and $x \neq w$. Since $\U_1$ is not generated by $uv$ alone (otherwise $\U_2$ contains singletons), there must be another edge in $\U_1$ both intersecting $ux$ and $vw$. It thus follows that this edge must be equal to $xw$. Again, since $(\langle uv,wx\rangle,\langle vw,ux\rangle)$ is a dual pair we are done by \Cref{prop:dual_pairs}.
\end{proof}

Note that every dual pair in a flag $1$-dimensional complex is isomorphic to a dual pair in the boundary of the~$2$-dimensional cross polytope (that is, the $4$-cycle), see \Cref{ex:dual1}. This motivates the following more general conjecture:

\begin{conjecture}
\label{conj:crosspolytopes}
  Let $\U=(\U_1,\U_2)$ be a dual pair in a~$d$-dimensional flag simplicial complex without boundary~$\D$. Then, $\U$ is isomorphic to a dual pair in the boundary complex of the $(d+1)$-dimensional cross-polytope.
\end{conjecture}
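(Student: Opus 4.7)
The plan is to reduce \Cref{conj:crosspolytopes} to two structural claims about $E(\U)$, namely that (i) the non-edges of $\D$ inside $E(\U)$ form a matching and (ii) the induced subcomplex of $\D$ on $E(\U)$ has dimension at most $d=\dim\D$, and then to transport $\U$ to $\partial\beta^{d+1}$ along an embedding of essential vertex sets. The $1$-dimensional classification \Cref{lem:1dflag} is both a prototype and a consistency check: its four isomorphism classes are exactly the dual pairs of $\partial\beta^2$, the $4$-cycle.

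As a preliminary, I would show that every vertex appearing in a minimal element of $\U_1$ (or $\U_2$) already lies in $E(\U)$: if $\sigma\in\U_1$ is minimal and $v\in\sigma$, then $\sigma\setminus v\notin\U_1$ supplies some $\tau\in\U_2$ with $\tau\cap\sigma=\{v\}$, and reducing $\tau$ to any minimal element of $\U_2$ that still contains $v$ certifies $v\in E(\U)$. Consequently $\U$ is determined by its minimal elements, all of which lie in the induced subcomplex of $\D$ on $E(\U)$; by flagness this subcomplex is in turn determined by its $1$-skeleton. The central step is the \emph{matching lemma}: for every $v\in E(\U)$, at most one $w\in E(\U)$ satisfies $vw\notin\D$. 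I would argue this by contradiction, taking distinct $v,w_1,w_2\in E(\U)$ with $vw_1,vw_2\notin\D$ together with minimal $\sigma\in\U_1$ containing $v$ and minimal $\tau_i\in\U_2$ containing $w_i$. The non-edges force $w_i\notin\sigma$ and $v\notin\tau_i$, and cross-intersection yields further essential vertices $y_i\in\sigma\cap\tau_i$ adjacent to both $v$ and $w_i$. One then extends $\sigma$ to a ridge and applies \Cref{prop:meep} to swap $v$ out of an enveloping facet, aiming to produce either a face disjoint from $\tau_1$ and $\tau_2$ (contradicting $\U_2=\U_1^*$) or a new element of $\U_1$ obstructing the minimality of $\sigma$.

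Granted the matching lemma, $E(\U)$ partitions into $r$ antipodal pairs $\{v_i,\bar v_i\}$ and $s$ unpaired vertices; by flagness the induced subcomplex on $E(\U)$ is the join of $r$ copies of a two-point set with an $(s-1)$-simplex, of dimension $r+s-1\le d$. One then embeds $E(\U)$ into the vertex set of $\partial\beta^{d+1}$ by sending the $r$ antipodal pairs to $r$ distinct antipodal pairs of the cross-polytope and the $s$ unpaired vertices to single vertices drawn from $s$ further antipodal pairs. Transporting the minimal elements of $\U_1,\U_2$ along this embedding gives upper sets $\U_1',\U_2'$ in $\partial\beta^{d+1}$; a check analogous to the preliminary observation --- any face of $\partial\beta^{d+1}$ cross-intersecting every generator of $\U_1'$ has its restriction to the image of $E(\U)$ lying in $\U_2$, and therefore contains some minimal element of $\U_2$ --- establishes $(\U_1')^*=\U_2'$ and symmetrically $(\U_2')^*=\U_1'$, producing the isomorphic dual pair.

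I expect the matching lemma to be the main obstacle. The vertex-flip of \Cref{lemma_vertex_flip} was designed for intersecting families of facets and does not obviously adapt to the lower-dimensional faces appearing in a general dual pair, so a more intricate exchange argument may be required. If a direct contradiction proves elusive, an alternative is to induct on the maximum size of minimal generators of $\U_1,\U_2$, extending \Cref{ex:dual3} (which classifies dual pairs generated only by vertices and edges) one generator-size at a time and showing that only cross-polytope-type configurations persist.
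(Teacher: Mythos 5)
The statement you are trying to prove is \Cref{conj:crosspolytopes}, which is \emph{stated as a conjecture} in the paper: the authors do not prove it, offering only the $1$-dimensional case (\Cref{lem:1dflag}), the classification of dual pairs generated by vertices and edges in \Cref{ex:dual3}, and the observation that the $1$-dimensional classes match those of the $4$-cycle. So there is no proof in the paper to compare yours against, and any proposal must be judged on whether it actually closes the problem.

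Your proposal does not: the entire argument is funneled through the \emph{matching lemma} --- that the non-edges of $\D$ restricted to $E(\U)$ form a matching --- and you explicitly leave this unproven, offering only a sketch of a contradiction argument that appeals to \Cref{prop:meep} without carrying it out. This is precisely where the difficulty lies, and you acknowledge as much. The surrounding scaffolding (that vertices of minimal elements of $\U_i$ all lie in $E(\U)$, that flagness makes the induced subcomplex on $E(\U)$ determined by its $1$-skeleton, and that a matching of non-edges plus the dimension bound $r+s-1\le d$ allows an embedding into $\partial\beta^{d+1}$ along which a dual pair transports) is reasonable and essentially correct, but it only \emph{reduces} the conjecture to the matching lemma; it does not prove anything by itself. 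Note also that your matching lemma is a genuinely stronger-looking structural statement than the conjecture, since the paper's notion of isomorphism of dual pairs is a bijection on essential vertices preserving which subsets are minimal elements --- it does \emph{not} a priori preserve the edge/non-edge structure of the ambient complexes --- so even the truth of \Cref{conj:crosspolytopes} would not obviously imply your matching lemma, and you have no independent route to it. As it stands, the proposal is a plan of attack on an open problem, with the decisive step missing; a concrete first test would be to settle the matching lemma already in dimension $2$, where the target is $\partial\beta^3$ (the octahedron) and where \Cref{lemma_vertex_flip}-style exchange arguments might be adapted most readily.
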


\begin{corollary}[of \Cref{lem:1dflag}]
\label{lemma:intersection_cases}
Let~$\C$ be a flag $3$-complex without boundary triangles,~$\family$ an intersecting family of tetrahedra of~$\C$, $\face \in \C$ a minimal base face of~$\family$ containing at least two vertices $a,b\in\C$, and let $\U = (\U_a,\U_b)$ be a dual pair supporting~$\family$ at $ab$. Then, modulo exchange of~$a$ and $b$, one of the following three situations occur.
\begin{enumerate}[I)]
\item $\U_a = \U_b = \langle v \rangle$ for some vertex $v \in \link_\C (ab)$.
In particular, all facets of $\family_{a\overline b}$ and $\family_{b \overline a}$ have a common vertex in $\link_\C ({ab})$.

\item $\U_a = \langle u,v \rangle$, $\U_b = \langle uv \rangle$ for an edge $uv \in \link_\C (ab)$. In particular, all facets of $\family_{a \overline b}$ contain the edge $uv$.

\item  $\U_a = \langle uv, wx \rangle$, $\U_b = \langle vw, ux \rangle$, for some induced cycle $u, v, w, x$ in $\link_\C (ab)$ in that order. In particular,
\begin{itemize}
\item all facets in $\family_{a \overline b}$ contain one of $uv$ or  $wx$, and
\item all facets in $\family_{b \overline a}$ contain one of $vw$ or  $ux$.
\end{itemize}
\end{enumerate}
\end{corollary}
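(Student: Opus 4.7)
The plan is to reduce directly to the classification of dual pairs in flag $1$-complexes given by \Cref{lem:1dflag}. First I would identify the ambient complex $\lkcap{\C}{a}{b}$ as a flag $1$-complex. Since $\C$ is flag, any clique on vertices of $\vertices(\link_\C(a)) \cap \vertices(\link_\C(b))$ together with $\{a,b\}$ is again a clique, hence a face; therefore $\lkcap{\C}{a}{b}$ coincides with the usual link $\link_\C(ab)$. This link has dimension at most $1$ because $\dim\C = 3$, and it cannot contain a $3$-cycle: such a cycle, together with $a$ and $b$, would form a $5$-clique in $\C$, hence (by flagness) a $4$-simplex, contradicting $\dim\C = 3$. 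So $\link_\C(ab)$ is a flag $1$-complex, and \Cref{lem:1dflag} applies, giving four possible isomorphism classes for $(\U_a, \U_b)$.

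Next I would exclude the trivial dual pair $(\emptyset, \link_\C(ab))$ using the minimality of $\face$. Up to swapping $a$ and $b$, suppose $\U_a = \emptyset$. The inclusion $\family_{a\overline b}|_{\lkcap{\C}{a}{b}} \subseteq \U_a = \emptyset$ forces $\family_{a\overline b} = \emptyset$, since otherwise the restriction of any $A \in \family_{a\overline b}$ would provide an element of $\U_a$. But then every facet of $\family$ containing $a$ also contains $b$, so $\face \setminus \{a\}$ is still a base face of $\family$, contradicting the minimality of $\face$.

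The three remaining isomorphism classes translate directly into Cases I, II, and III of the statement, via the definition of supporting. A restriction lying in $\langle v \rangle$ forces the facet to contain $v$; a restriction in $\langle uv \rangle$ forces it to contain the edge $uv$; and a restriction in $\langle uv, wx \rangle$ forces it to contain one of $uv$ or $wx$. This gives Cases I, II (after possibly swapping the roles of $a$ and $b$ to put the pair in the asymmetric form $\langle u,v \rangle$ versus $\langle uv \rangle$), and III, respectively. In Case III, the cycle $u,v,w,x$ is automatically induced: in the proof of \Cref{lem:1dflag} the non-edges $uw$ and $vx$ are forced precisely by the absence of triangles in the flag $1$-complex $\link_\C(ab)$.

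I expect the main obstacle to be purely bookkeeping, namely the two preliminary identifications: $\lkcap{\C}{a}{b} = \link_\C(ab)$ and the flagness of this $1$-dimensional complex. Both are mild consequences of $\C$ being a flag $3$-complex, and once these are in place, the body of the argument is the direct invocation of \Cref{lem:1dflag} plus the elementary minimality argument that rules out the trivial pair.
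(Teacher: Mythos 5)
Your proposal is correct and takes essentially the same route as the paper: identify $\lkcap{\C}{a}{b}$ with $\link_\C(ab)$ using flagness, observe that this link is a flag $1$-complex, rule out the trivial dual pair $(\emptyset,\link_\C(ab))$ via minimality of the base face, and invoke \Cref{lem:1dflag}. Your write-up is merely more explicit than the paper's at two points — you argue flagness of the link via the absence of $3$-cycles rather than via the link being an induced subcomplex, and you spell out why $\U_a=\emptyset$ forces $\family_{a\overline b}=\emptyset$ and hence a smaller base face — but these are the same underlying ideas.
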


\begin{proof}
  Note that, since~$\C$ is flag and $ab$ is an edge of~$\C$, the link of $ab$ in~$\C$ equals 
  $\lkcap{\C}{a}{b}$ and it is hence an induced subcomplex. In particular, it is 
  a flag $1$-dimensional complex. Moreover, note that $\face$ is minimal and thus $(\U_a,\U_b)$ cannot be isomorphic to $(\emptyset, \link_\C (ab))$. \Cref{lem:1dflag} then implies that we only have the three cases in the statement.
\end{proof}

We say that $\family_{a\overline b}$ and $\family_{b \overline a}$ have intersection type I, II or III according to the above cases.
We can now prove the remaining cases of \Cref{theorem:3dim}, namely, base faces of size three (\Cref{lemma:3to2}) in \Cref{ssec:3to2} and size four (\Cref{lemma:4to3}) in \Cref{ssec:4to3}.

\subsection{Base faces of size three}
\label{ssec:3to2}

In the proof we remove and replace facets of an intersecting family~$\family$ with base face $\face$ in order to construct a new intersecting family $\family'$ with $|\family'| \geq |\family|$ and a base face $\face'$ with $|\face| > |\face'|$.
This is only possible if we can find facets not in~$\family$ that intersect both $\face'$ and all facets in~$\family$.

The following basic observation is a very useful tool to find such facets.
We use it several times, sometimes implicitly, which is why we state it here.

\begin{lemma}
  \label{lem:opp}
  Let~$\C$ be a flag pure $d$-complex, let $\family \subset \C$ be an intersecting family of facets, and let $\facetA \in \family$. Let $\facetB$ and $\facetC$ be facets in~$\C$ such that $\facetB\cap\facetA$ and  $\facetC\cap\facetA$ partition $\facetA$. Then at most one of $\facetB$ and $\facetC$ is in $\family$.
\end{lemma}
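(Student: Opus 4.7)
The plan is a short proof by contradiction that hinges on the flagness of $\C$. Suppose, for contradiction, that both $\facetB$ and $\facetC$ lie in $\family$. Since $\family$ is intersecting, there exists a vertex $v \in \facetB \cap \facetC$. The first step is to show $v \notin \facetA$: because $\facetA \cap \facetB$ and $\facetA \cap \facetC$ partition $\facetA$, in particular they are disjoint, so $v$ cannot lie in both, hence $v \notin \facetA$.

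Next, I would exploit the hypothesis that $(\facetA \cap \facetB) \cup (\facetA \cap \facetC) = \facetA$ together with flagness. Since $\{v\} \cup (\facetA \cap \facetB) \subseteq \facetB$ is a face of $\C$, the vertex $v$ spans an edge with every vertex of $\facetA \cap \facetB$; similarly, using $\facetC$, the vertex $v$ spans an edge with every vertex of $\facetA \cap \facetC$. Combined, $v$ is adjacent in $\C$ to every vertex of $\facetA$. Since $\facetA$ is itself a face, the set $\{v\} \cup \facetA$ is a clique in the $1$-skeleton of $\C$, and flagness forces $\{v\} \cup \facetA \in \C$. This contradicts the maximality of the facet $\facetA$, completing the argument.

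The hard part, if any, is making sure the disjointness in the partition hypothesis is being used in the right place; once $v \notin \facetA$ is isolated, the flagness step is essentially automatic. I expect the write-up to be only a few lines, and no additional machinery from \Cref{sec:setting} or \Cref{ssec:sectors,ssec:flag1cmplxs} should be needed.
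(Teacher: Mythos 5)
Your proof is correct and follows the same approach as the paper's: pick a common vertex $v$ of $\facetB$ and $\facetC$, observe that the partition hypothesis forces $v\notin\facetA$, and conclude via flagness that $\facetA\cup\{v\}$ would be a clique, hence a face, contradicting that $\facetA$ is a facet of a $d$-complex. You spell out the adjacency argument (using $v\in\facetB$ and $v\in\facetC$ separately) in a bit more detail than the paper, but the key idea is identical.
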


\begin{proof}
  Assume otherwise that there exist facets $\facetB, \facetC\in \family$ such that $\facetB\cap\facetA$ and  $\facetC\cap\facetA$ partition $\facetA$.
  Since~$\family$ is an intersecting family, there exists a vertex $v \in \facetB\cap\facetC$. The hypothesis on $\facetB$ and $\facetC$ imply that $v\notin\facetA$.
  But then $\facetA \cup v$ spans a complete graph of $d+2$ vertices, a contradiction to~$\C$ being flag of dimension~$d$.
\end{proof}

Given an intersecting family~$\family$ with base face $\face$, and $\smallface \subset \face$, we define 
\[
\family_\smallface^\circ := \set{\facetA \in \family }{ \facetA\cap \face = \smallface}.
\]

\begin{lemma}
\label{lemma:3to2}
  \Cref{theorem:3dim} holds in the case that $|\face| = 3$.
\end{lemma}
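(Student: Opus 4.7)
The plan is to analyze the dual pair $(\U_a,\U_b)$ supporting $\family$ at the pair $(a,b)\subset\face=abc$ using \Cref{lemma:intersection_cases}, to simplify it via iterated application of \Cref{lemma_vertex_flip}, and then to exploit flagness together with $\dim\C=3$ to reduce to a small number of tractable configurations. Throughout I fix certificate facets $F_a,F_b,F_c\in\family$ with $F_v\cap\face=\{v\}$, which exist by the minimality of $\face$.

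My first step is to iteratively apply \Cref{lemma_vertex_flip} at $(a,b)$ to eliminate every essential vertex $v$ with $v\neq c$ and $cv\notin\C$. Each such application produces an intersecting family of size at least $|\family|$ whose supporting dual pair has strictly fewer essential vertices, so the process terminates. The outcome is a family $\tilde\family$ with $|\tilde\family|\geq|\family|$ whose supporting dual pair at $(a,b)$ has all essential vertices in $\{c\}\cup\vertices(\link_\C(abc))$.

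Next, flagness together with $\dim\C=3$ drastically restricts the type of this remaining dual pair: if two essential vertices $u,v\in\link_\C(abc)\setminus\{c\}$ form an edge of $\link_\C(ab)$, then all ten pairs among $\{a,b,c,u,v\}$ are edges of $\C$, so flagness would force $abcuv\in\C$, contradicting $\dim\C=3$. This rules out Type~III entirely and restricts Type~II to the subcase $\U_a=\langle c,v\rangle$, $\U_b=\langle cv\rangle$ with $v\in\link_\C(abc)\setminus\{c\}$. In each of the trivial dual pair, Type~I with essential vertex $v=c$, and this restricted Type~II, a direct inclusion check gives $\tilde\family\subseteq\st_\C(a)\cup\st_\C(c)$, so $\face':=ac\subsetneq\face$ is a base face of $\tilde\family$ and we may take $\family':=\tilde\family$.

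The delicate configuration is Type~I with a single essential vertex $v\in\link_\C(abc)\setminus\{c\}$. Here every facet $F$ of $\tilde\family$ outside $\st_\C(a)\cup\st_\C(c)$ must contain $bv$ yet omit $a$ and $c$, so $F=bvxy$ with $x,y\notin\{a,c\}$; for each such $F$ I swap it for a facet containing $a$ or $c$, using either a direct flagness argument (obtaining $avxy$ or $cvxy$ when $ax,ay\in\C$ or $cx,cy\in\C$) or the missing edge exchange property (\Cref{prop:meep}) applied at an appropriate ridge $bvx$ or $bvy$ to produce $abvy$, $abvx$, $bcvy$, or $bcvx$. The main obstacle will be the corner case $ax,ay,cx,cy\notin\C$, in which none of the direct swaps apply; the intersecting property of $\tilde\family$ then forces $v\in F_a$ and $v\in F_c$ and imposes tight structural constraints on $\tilde\family$, from which a careful counting or replacement argument should produce the desired $\family'\subseteq\st_\C(a)\cup\st_\C(c)$ of size at least $|\family|$.
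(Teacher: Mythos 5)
The first part of your argument (eliminating essential vertices via iterated \Cref{lemma_vertex_flip}, using flagness together with $\dim\C=3$ to rule out type~III and restrict type~II, and handling the trivial dual pair, type~I with $v=c$, and restricted type~II by a direct inclusion into $\st_\C(a)\cup\st_\C(c)$ or $\st_\C(b)\cup\st_\C(c)$) is sound and runs essentially parallel to the paper's. The genuine gap is in the remaining case, type~I with essential vertex $v\in S_{abc}$, $v\neq c$. You propose to move $\tilde\family$ into $\st_\C(a)\cup\st_\C(c)$ by swapping out every $F=bvxy\in\tilde\family_b^\circ$, but you explicitly defer the corner case $ax,ay,cx,cy\notin\C$ to ``a careful counting or replacement argument.'' That corner case is the heart of the lemma and is left unproved. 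The approach also has unaddressed bookkeeping issues: there may be many facets $bvxy$ to remove, and the replacements would have to be pairwise distinct, intersect one another, and not already lie in $\tilde\family$; none of this is established.

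The paper resolves the case with a different one-shot swap towards $\st_\C(a)\cup\st_\C(b)$, and the structural fact that drives it is exactly what your plan does not extract. Because $\face$ has \emph{minimal size}, $vc$ cannot be a base face, so some $\facetA=abyz\in\family$ misses both $v$ and $c$. For $\facetA$ to intersect any $\facetC\in\family_c^\circ$, one of $y,z$---call it $u$---must lie in $\facetC$, and then $\{a,b,c,u\}$ is a clique, so by flagness $u\in S_{abc}$; since $y,z$ are adjacent and the trisector is an independent set, this $u$ is the same for all $\facetC$. Now $uv\notin\C$, so $u$ avoids every facet of $\family_a^\circ\cup\family_b^\circ$ (which all contain $v$), and chasing intersections from this pins $\family_c^\circ$ down to at most one tetrahedron $cuwx$. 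Swapping that single facet for $abvy$, where $y$ is any neighbour of $v$ in the bisector $S_{ab}$, yields $\family'$ of the same size with base face $ab$. Without locating $\facetA$ and the vertex $u$, the ``tight structural constraints'' you invoke are not tight enough to make your replacement scheme, or any counting, go through.
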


\begin{proof}
Let $\face = abc$, and let~$S$ be the outerlink of $abc$. Note that the trisector $S_{abc}$ equals the link of $abc$ in~$\C$, hence it is
a collection of vertices (at least two since otherwise $abc$ is a boundary triangle in~$\C$) spanning no edge.
See \Cref{lem:induced} for further properties of the outerlink and its sectors. We have three cases:

\begin{itemize}
  \item \emph{$\family_{a\overline b}$ and $\family_{b\overline a}$ have intersection type III. } If $c$ is not part of the $4$-cycle, then there must be at least two opposite vertices of the $4$-cycle not in $S_{abc}$. If $c$ is part of the $4$-cycle, the vertex opposite $c$ in the $4$-cycle is not in $S_{abc}$. Hence, in both cases we can use \Cref{lemma_vertex_flip} to obtain a family $\family'$ with $|\family'| \geq |\family| $ and less essential vertices in $(\U_a,\U_b)$. In particular, now $\family'_{a\overline b}$ and $\family'_{b\overline a}$ have intersection type I or II.

  \item \emph{$\family_{a\overline b}$ and $\family_{b\overline a}$ have intersection type II. } W.l.o.g., say that $\U_a = \langle uv \rangle$, and $\U_b = \langle u,v \rangle$ for two vertices $u,v \in \link_\C (ab)$. If $c \not \in uv$, then one of $u$ and~$v$ is not in $S_{abc}$ and, again, we can apply \Cref{lemma_vertex_flip} to obtain intersection type I. If $c \in uv$, then for all $\facetA \in \family_{a\overline b}$ we have $c \in \facetA$. It follows that $\family_a^\circ = \emptyset$, and $\face' = bc$ is a smaller base face.

  \item \emph{$\family_{a\overline b}$ and $\family_{b\overline a}$ have intersection type I. } Say $\U_a = \langle v \rangle = \U_b$. If $v=c$, then either $ac$ or $bc$ is a base face. If $v \neq c$ and $v \not \in S_{abc}$ then we can apply \Cref{lemma_vertex_flip}. Hence, assume $v \in S_{abc}$. As $\face$ is minimal in size, $vc$ is not a base face. This implies that there exists a facet $\facetA \in \family_{ab}$ such that $v\notin \facetA$. But $\facetA$ must intersect elements in $\family_c^\circ$ and hence there must exist a vertex $u\in \facetA$ such that $u \in S_{abc}$. As $u$ and~$v$ are in the same trisector, $uv \notin \C$. In particular $u\notin \facetB$ for any $\facetB \in \family_a^\circ \cup \family_b^\circ$. Let $\facetC \in \family_c^\circ$. Then $\facetB\cap \vertices(S_{ac}) = uw$ and $w\in \facetB$ for any $\facetB \in \family_a^\circ$. Similarly, $\facetB\cap \vertices(S_{bc}) = ux$ and $x\in \facetB$ for any $\facetB \in \family_b^\circ$. It follows that the only possible element in $\family_c^\circ$ is $cuwx$.

Let $y$ be any neighbor of~$v$ in the $S_{ab}$. In particular, $y \neq u$. Then $abvy \not \in \family$ because it does not intersect with $cuwx$. Setting $\family' = \family \setminus \{ cuwc \} \cup \{ abvy \}$ yields an intersecting family of equal size with base face $ab$.
\end{itemize}

\end{proof}

\begin{remark}
\label{rem:counter2}
  Without the assumption that $\face$ has minimal size among base faces for~$\family$ \Cref{lemma:3to2}, and hence~\Cref{theorem:3dim}, is not true, as the following example shows:

  \centerline{\includegraphics[height=6cm]{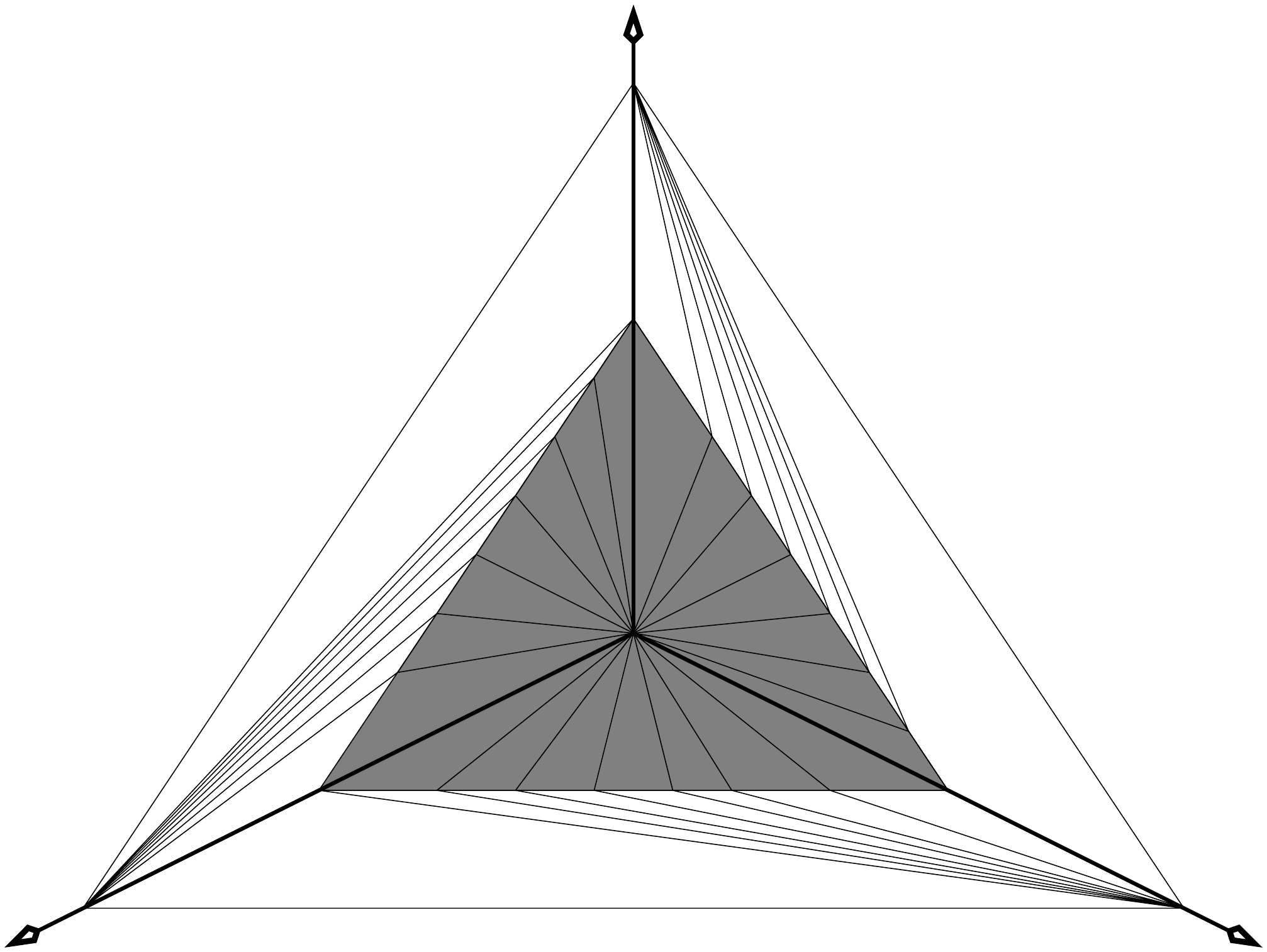}}

  The figure shows the outerlink of a triangle $abc$ and the tetrahedra corresponding to the shaded triangles form an intersecting family~$\family$ of cardinality $3k$ (where $k=7$ in the figure), containing the central point~$v$ which is in the trisector.
  Thick rays are bisectors, leading to a second point in the trisector.
  The triangle $abc$ is a base face for~$\family$, but every intersecting family with base face properly contained in $abc$ has size roughly $2k$.
  However, $v$ is a smaller base face for~$\family$.
\end{remark}

\subsection{Base faces of size four}
\label{ssec:4to3}

\begin{lemma}
\label{lemma:4to3}
  \Cref{theorem:3dim} holds in the case that $|\face| = 4$.
\end{lemma}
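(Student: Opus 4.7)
The plan is to mimic the argument of \Cref{lemma:3to2} one dimension higher. Because $|\face|=4$ and $\C$ is $3$-dimensional, the minimal base face $\face=abcd$ is itself a facet, and by \Cref{lem:induced} its outerlink $S$ is a pure $2$-dimensional flag complex without boundary that decomposes into the sectors $S_a$, $S_b$, $S_c$, $S_d$, with $1$-dimensional bisectors $S_{uv}$, discrete trisectors $S_{uvw}$ of cardinality at least two, and empty $4$-way intersection. Minimality of $\face$ guarantees that every $\family_v^\circ$ (for $v\in\face$) is nonempty, since otherwise $\face\setminus\{v\}$ would itself be a base face of strictly smaller size; I will assume WLOG that $|\family_d^\circ|$ is smallest and aim to produce $\family'$ with $|\family'|\geq|\family|$ and base face $abc$, by replacing each element of $\family_d^\circ$ with a facet in $\st_\C(a)\cup\st_\C(b)\cup\st_\C(c)$ that is not already in $\family$.

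The first step is a cleaning phase via iterated use of \Cref{lemma_vertex_flip}. Fixing a pair $\{u,v\}\subset\face$ with $\{x,y\}=\face\setminus\{u,v\}$, the essential vertices of the dual pair supporting $\family$ at $uv$ that satisfy the flippability hypothesis of that lemma are exactly those in $\vertices(S_{uv})\setminus(\vertices(S_x)\cup\vertices(S_y))$: these are precisely the vertices of $\link_\C(uv)$ whose only neighbors in $\face$ are $u$ and $v$. Iterating the flip at the pair $\{a,b\}$, and subsequently at $\{c,d\}$ on the resulting family, leaves me with dual pairs at both of these pairs whose essential vertices lie in $\{x,y\}\cup\vertices(S_{uvx})\cup\vertices(S_{uvy})$. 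Feeding this into \Cref{lemma:intersection_cases} pins down the type of each of these dual pairs with essential vertices concentrated in trisectors (or in $\face$ itself).

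The heart of the argument is the resulting constraint on $\family_d^\circ$. Any $\facetA=d\cup T\in\family_d^\circ$ has $\facetA\cap\face=\{d\}$, so its triangle $T\subset S_d$ must intersect every facet of $\family_a^\circ$ at a vertex in $\vertices(S_{ad})$, every facet of $\family_b^\circ$ at a vertex in $\vertices(S_{bd})$, and every facet of $\family_c^\circ$ at a vertex in $\vertices(S_{cd})$; with only three non-$d$ vertices, this typically forces the vertices of $T$ to lie in double-trisectors $S_{abd}$, $S_{acd}$, $S_{bcd}$. Combining this with the pinned-down dual pair types from the cleaning phase, I would pair each $\facetA\in\family_d^\circ$ with a replacement $\facetA'\in(\st_\C(a)\cup\st_\C(b)\cup\st_\C(c))\setminus\family$ produced via the missing edge exchange property (\Cref{prop:meep}); the replacement intersects every facet of $\family\setminus\family_d^\circ$ thanks to the constraints just described and \Cref{lem:opp}, which rules out the only possible obstruction (two facets meeting $\face$ in complementary subsets).

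The main obstacle I anticipate is the endgame, when several of the dual pairs at the three pairs involving $d$ are simultaneously of type II or III with essential vertices concentrated in trisectors. In such configurations the same trisector vertex may be essential for several distinct pairs, and the substitutions have to be designed coherently across all of $\family_d^\circ$ at once. This is the direct analog of the final bullet in the proof of \Cref{lemma:3to2} but with more interlocking possibilities, and the flag hypothesis on $\C$ (which in particular forbids any $5$-clique around $\face$) is what I expect to make the substitutions ultimately available.
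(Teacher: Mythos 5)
Your overall framework (outerlink, sectors/bisectors/trisectors, the dual-pair classification from \Cref{lem:1dflag}, iterated use of \Cref{lemma_vertex_flip}, flagness forbidding $5$-cliques, and \Cref{lem:opp}) matches the paper's toolkit, and the initial observations — $\face=abcd$ is a facet, each $\family_v^\circ$ is non-empty by minimality — are correct. But what you have written is a plan with the hard case left unresolved, and that hard case is where the substance of \Cref{lemma:4to3} lies.

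Two concrete problems. First, you do not establish — and do not use — the strengthening that each $\family_x^\circ$ can be assumed to have at least two facets. The paper obtains this cheaply (if $\family_a^\circ=\{auvw\}$ is a singleton, replace it by the facet $bcdx$ across the ridge $bcd$, which by flagness and \Cref{lem:opp} is available and not already in $\family$, and $bcd$ then becomes a smaller base face). This assumption is used repeatedly in the paper's case analysis, e.g.\ to rule out type~I configurations with an essential trisector vertex and to force the existence of two triangles $auvx, auvy$ in the type~II and type~III arguments. Without it your ``constraint analysis'' in the middle step cannot produce the $5$-clique contradictions you would need.

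Second, and more fundamentally, the all-type-III case is not a situation you can finesse with the substitution scheme you describe, and you explicitly flag it as an unresolved obstacle. The paper's resolution there is of a different nature: it shows that under type~III at every pair, with all four $4$-cycles disjoint from $\face$, the family $\family$ is forced to be the \emph{unique} configuration of nine tetrahedra $abcd, astw, auvx, bsvz, btuy, csxy, cuwz, dtxz, dvwy$, whose edges span the complete $4$-partite graph $K_{3,3,3,3}$. That clique complex $\D$ is a subcomplex of $\C$, every vertex star in $\D$ already has $27$ facets, hence $|\family|=9$ is beaten by any vertex star and one simply takes $\family'=\st_\C(a)$ with base face $\{a\}$. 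No facet-by-facet replacement of $\family_d^\circ$ is needed (or obviously possible) in this case. Your proposal also slides over the fact that ``iterating the flip at $\{a,b\}$ and subsequently at $\{c,d\}$'' changes $\family$, so the dual pair at $\{c,d\}$ for the new family is not a priori controlled by the old one; the paper instead does a clean three-way case split on the intersection type of a single well-chosen pair (with the preliminary replacement ensuring $|\family_x^\circ|\ge 2$), and only invokes \Cref{lemma_vertex_flip} inside each case when a non-trisector essential vertex is actually present.
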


\begin{proof}
  Let $\face= abcd$ be a base face of minimal size.
  In particular, no triangle of~$\C$ intersects all facets of~$\family$, and all of $\family_a^\circ$, $\family_b^\circ$, $\family_c^\circ$ and $\family_d^\circ$ are non-empty.
  Moreover, we can assume that each of them contains at least two facets.
  To see this, w.l.o.g. assume that $\family_a^{\circ} = \{ auvw \}$.
  Because~$\C$ has no boundary triangles, there must exist a facet $bcdx \in C$.
  Since~$\C$ is flag, $x \not \in \{u,v,w \}$ and, by~\Cref{lem:opp}, $bcdx \not \in \family$.
  It follows that $\family' = \family \setminus \{ auvw \} \cup \{ bcdx \}$ is an intersecting family with base face $bcd$ and $|\family'| = | \family |$.

We have three cases, namely
\begin{enumerate}
  \item \label{dim3:typeI} Some pair, say $\family_{a\overline b}$ and $\family_{b \overline a}$,  have intersection of type I;

  \item \label{dim3:typeII} No pair has intersection type I and some pair, say $\family_{a\overline b}$ and $\family_{b \overline a}$, have intersection of type II; or

  \item \label{dim3:typeIII} All pairs $\family_{x\overline y}$ and $\family_{y \overline x}$, $x,y \in \face$ have intersection of type III.
\end{enumerate}

We treat the three cases individually:
\begin{enumerate}
  \item[\eqref{dim3:typeI}]  Let $\U_a = \langle v \rangle = \U_b$. If $v \in \{ c,d\}$, then either $acd$ or $bcd$ is a base face. If $v \not \in \{ c,d\}$ and~$v$ is not in any trisector, then we can apply \Cref{lemma_vertex_flip}. Hence, assume~$v$ is in a trisector, say $S_{abc}$. As $\face$ is minimal in size, $vcd$ is not a base face. This implies that there exists a facet $\facetA \in \family_{ab}$ such that $v\notin \facetA$. But $\facetA$ must intersect elements in $\family_c^\circ$ and hence there must exist a vertex $u\in \facetA$ such that $u \in S_{abc}$. As $u$ and~$v$ are in the same trisector, $uv \notin \C$. In particular, for every $\facetB \in \family_a^\circ \cup \family_b^\circ$ we have that $u\notin \facetB$. Let $\facetC \in \family_c^\circ$. Then $\facetB\cap \vertices(S_{ac}) = uw$ and $w\in \facetB$ for any $\facetB \in \family_a^\circ$. Similarly, $\facetB\cap \vertices(S_{bc}) = ux$ and $x\in \facetB$ for any $\facetB \in \family_b^\circ$. It follows that the only possible element in $\family_c^\circ$ is $cuwx$. But we assumed that $| \family_c^\circ | \geq 2$.

 \item[\eqref{dim3:typeII}]  Let $\U_a = \langle uv \rangle$ and $\U_b = \langle u,v \rangle$ for two vertices $u,v \in \link_\C (ab)$. If $c \in uv$ ($d \in uv$), then for all $\facetA \in \family_{a\overline b}$ we have $c \in \facetA$ ($d \in \facetA$). It follows that $\family_a^\circ = \emptyset$ and $bcd$ is a smaller base face, contradiction. Moreover, if one of $u$ and~$v$ is not in any trisector, we can apply \Cref{lemma_vertex_flip} to obtain intersection type I.

Hence, assume both $u$ and~$v$ are in trisectors. In particular, they must be in distinct trisectors, say, $u \in S_{abc}$ and $v \in S_{abd}$.

All elements of $\family_{c \overline a}$ must contain $u$. To see this, suppose $\facetA \in \family_{c \overline a}$ and $u \not \in \facetA$ (hence $a,u \not \in \facetA$). By assumption, there are at least two tetrahedra in $\family_a^\circ$, $auvx$ and $auvy$. Moreover, if $v \in \facetA$, then  $\{a,b,c,d,v\}$ spans a $5$-clique (recall that $v \in S_{abd}$). Hence $a,u,v \not \in facetA$ and thus $\facetA$ must contain both $x$ and $y$. But this implies that $\{a,u,v,x,y\}$ spans a $5$-clique in~$\C$, a contradiction.

Furthermore, all elements of $\family_{a \overline c}$ must contain $u$: Suppose otherwise that there exists a facet $\facetA \in \family_{a \overline c}$, $u \not \in \facetA$. Then, necessarily $b \in \facetA$ (otherwise $\facetA \in \family_{a \overline b}$, contradiction to $\U_a = \langle uv \rangle$). We know that for all $\facetB \in \family_{c \overline a}$ we have that $u \in \facetB$. Furthermore, let $x \in \facetA \cap \facetB$. Altogether, we have that $\{a,b,x\}\subset \facetA$, and $\{ c,u,x\} \subset \facetB$. But then~$\C$ must contain a $5$-clique spanned by $\{a,b,c,u,x\}$. Contradiction to the flagness of~$\C$.

But then all elements of $\family_{a \overline c}$ and $\family_{c \overline a}$ contain $u$. In other words, the dual pair $(\langle u \rangle,\langle u \rangle)$ in $\link_\C (ac)$ supports~$\family$ at $ac$, which means~$a$ and $c$ have intersection type I.

  \item[\eqref{dim3:typeIII}] Let $\U_a = \langle st, uv \rangle$, and $\U_b = \langle tu, sv \rangle$ for vertices $s,t,u,v \in \link_\C (ab)$ forming a $4$-cycle in that order.

If both $c,d \in \{s,t,u,v \}$ then, by flagness, $cd$ must be an edge of the $4$-cycle. W.l.o.g., let $st = cd$. Then every element of $\family_{b \overline a}$ must contain either~$d$ or $c$. This implies $\family_b^\circ$ is empty, a contradiction to the assumption that $\family_b^\circ$ is of size at least two.

Hence, w.l.o.g., assume that $d \not \in \{s,t,u,v \}$ and $s=c$. We want to show that, in this case, $acu$ is a base face of~$\family$ -- a contradiction to the assumption that $\face$ is minimal. All elements of $\family_b$ either contain~$a$, or, in case they belong to $\family_{b \overline a}$, contain $u$ or $c$ by the way $\family_{a\overline b}$ and $\family_{b \overline a}$ intersect. To see that all elements of $\family_d$ either contain~$a$, or $u$, note that, by assumption, there exist two facets $auvx, auvy \in \family_a^{\circ}$ (since $s=c$, none of the facets of $\family_{a \overline b}$ containing $st$ are in $\family_a^{\circ}$). No facet $\facetA \in \family_d$ can contain~$v$, since otherwise $\{a,b,c,d,v\}$ spans a $5$-clique (note that $av$, $bv$ and $cv$ are all edges). Hence, a facet $\facetA \in \family_{d\overline a}$ with $u\not \in \facetA$ must intersect $auvx$ in $x$ and $auvy$ in $y$. In particular, $xy$ is an edge and $\{a,u,v,x,y\}$ spans a $5$-clique, a contradiction. Hence, $u \in A$ for all $A \in \family_{d\overline a}$ and $acu$ is a base face.

Hence, we can assume that $c,d \not \in \{s,t,u,v\}$, and by symmetry, that no $4$-cycle corresponding to the dual pair of a vertex pair $x,y \in \face$ contains a vertex of $\face$. In other words, all $4$-cycles are in the outerlink~$S$ of $\face$.

\medskip

First note that, in this case, $\family_{xy}$, $x,y \in \face$, must be empty. Otherwise assume that, w.l.o.g., there exists a facet $acst \in \family$, $s,t \not \in \{b,d\}$. But $s \in S_{abc}$ and $t \in S_{abd}$ by assumption and thus facets $abcs$ and $abdt$ belong to~$\C$, and $\{ a,b,c,s,t \}$ forms a $5$-clique, a contradiction to the flagness of~$\C$. By symmetry, the same applies to all $\family_{xy}$, $x,y \in \face$.

Moreover, $\family_{xyz}$, $x,y,z \in \face$, must be empty as well. Otherwise, w.l.o.g., let $abcs \in \family$, $s \neq d$. Since $\family_d^{\circ}$ is not empty it follows that $\{a,b,c,d,s\}$ spans a $5$-clique, a contradiction.

Altogether it thus follows that $\family = \family_a^{\circ} \cup \family_b^{\circ} \cup \family_c^{\circ} \cup \family_d^{\circ} \cup \{ \face \}$. Moreover, every element of, say, $\family_a^\circ = \family_{a\overline b}= \family_{a\overline c}= \family_{a\overline d}$ intersects the outerlink in a triangle with one edge each in bisectors $S_{ab}$, $S_{ac}$ and $S_{ad}$, and one vertex each in trisectors $S_{acd}$, $S_{abd}$ and $S_{abc}$. Also, the intersection of a bisector, say $S_{ab}$, with~$\family$ must be a $4$-cycle with pairs of non-adjacent vertices in trisectors $S_{abc}$ and $S_{abd}$ respectively.

\medskip

Let $stw$ and $uvx$ be triangles of $S_a$ corresponding to elements in $\family_a^\circ$ and let $tuy$ and $svz$ be triangles of $S_b$ corresponding to elements in $\family_b^\circ$ (if one of these triangles does not exist,~$a$ and $b$ must have intersection type II or I). Now, either $sw$ or $tw$ must be the edge of the $4$-cycle in $S_{ac}$ corresponding to the intersection between between $\family_a$ and $\family_c$. We present the proof of case \eqref{dim3:typeIII} for $sw$ being in the $4$-cycle of $S_{ac}$. The argument for $tw$ in the $4$-cycle of $S_{ac}$ is completely analogous due to symmetry.

It follows that $s$ and $u$ must be part of trisector $S_{abc}$, $t$ and~$v$ must be part of trisector $S_{abd}$, and $w$ and $x$ must be part of trisector $S_{acd}$. Moreover, it follows that the $4$-cycle of $S_{ac}$ must be $(s,w,u,x)$ in that order, since $w$ and $x$, being part of the same trisector, cannot span an edge.

Now every element of $\family_c^\circ$ containing edge $uw$ must contain $z$ to intersect $svz$ -- and thus the only option is $cuwz$. Similarly the only element of $\family_c^\circ$ containing edge $sx$ is $csxy$. Furthermore, for elements of $\family_d^\circ$ to intersect with all other elements of~$\family$ in type III, they must intersect the outerlink triangles $txz$ and $vwy$.

\begin{figure}
  \begin{center}
    \begin{tikzpicture}[scale=.7, every node/.style={scale=0.7}]
      \ngon{6}{(0,0)}{4}
          {1/2/black,2/3/black,3/4/black,4/5/black,5/6/black,6/1/black,1/3/black,2/4/black,3/5/black,4/6/black,5/1/black,6/2/black}
        {6}{$w$,$t$,$y$,$x$,$v$,$z$}{first}{0}

      \ngon{6}{(0,0)}{8}
          {}{6}{$u$,$u$,$u$,$u$,$u$,$u$}{second}{0}

      \ngon{6}{(0,0)}{6}
          {}{6}{$ac$,$ab$,$bc$,$ac$,$ab$,$bc$}{third}{2.5}

      \ngon{6}{(0,0)}{1.5}
          {}{6}{$ac$,$ab$,$bc$,$ac$,$ab$,$bc$}{third}{10}

      \ngon{6}{(0,0)}{3.8}
          {}{6}{$cd$,$ad$,$bd$,$cd$,$ad$,$bd$}{third}{30}

      \ngon{6}{(0,0)}{2.9}
          {}{6}{$cd$,$ad$,$bd$,$cd$,$ad$,$bd$}{third}{-21}

      \foreach \x in {1,...,6} {
        \draw[thick,black,shorten <=6pt, shorten >=6pt] {(first\x)--(second\x)};
      }

      \foreach \x in {1,...,6} {
        \draw[thick,black,shorten <=6pt, shorten >=6pt] {(first\x)--(0,0)};
      }

      \node[inner sep=0pt] at (0,0) {$s$};

      \node[inner sep=0pt, right] at (4.5,-3) {$\family_a^{\circ} \cap S = \{stw, uvx \}$ };
      \node[inner sep=0pt, right] at (4.5,-3.6) {$\family_b^{\circ} \cap S = \{tuy, svz \}$ };
      \node[inner sep=0pt, right] at (4.5,-4.2) {$\family_c^{\circ} \cap S = \{uwz, sxy \}$ };
      \node[inner sep=0pt, right] at (4.5,-4.8) {$\family_d^{\circ} \cap S = \{txz, vwy \}$ };

    \end{tikzpicture}
  \end{center}
  \caption{All $24$ edges of~$\family$ in the outerlink $S$ together with the bisectors they are contained in. Each bisector meets~$\family$ in a $4$-cycle, as can be read off the edge labels. Trisectors can be determined by the union of the bisectors of incident edges. Note that the deletion of $\{s,u\}$, $\{t,v\}$, $\{w,x\}$, or $\{y,z\}$ yields the set of edges with labels containing $d$, $c$, $b$, or~$a$ respectively. In each case it is the graph of an octahedron. \label{fig:typeIII}}
\end{figure}

At this point the intersection of~$\family$ with the outerlink~$S$ consists of eight triangles, with their edges forming six edge disjoint $4$-cycles, each contained in one of the six bisectors. Every additional element of~$\family$ must intersect $S$ in another triangle without adding any new edges to the intersection. But every such additional triangle must {\em (a)} intersect the three matching bisectors in one edge each, and {\em (b)} these edges must each be one of the two edges fixed by the respective intersection pattern of type III. For each vertex $x \in  \face$, the twelve edges of type $xy_i$, $y_i \in \face$, form the graph of an octahedron. That is, for each $x \in \face$ there are eight triangles satisfying {\em (a)} for the intersection of an element of $\family_x^{\circ}$ with $S$. But only two of these eight triangles satisfy condition {\em (b)}. See also \Cref{fig:typeIII} for details. However, all of the resulting $2 \times 4 = 8$ such triangles of $S$ already correspond to intersections of elements of~$\family$ with $S$. In particular, there cannot be any additional element in~$\family$ and thus~$\family$ must be equal to
\[
 abcd, astw, auvx, bsvz, btuy, csxy, cuwz, dtxz, dvwy 
\]
of size nine. Moreover, there is only one choice involved in the construction of~$\family$ with outcomes leading to isomorphic families. Hence, up to isomorphism, this is the unique intersecting family of tetrahedra such that all pairs $\family_{x\overline y}$ and $\family_{y \overline x}$, $x,y \in \face$ have intersection of type III and are disjoint of the base face.

The edges of the elements of~$\family$ form the complete $4$-partite graph with vertex partitions $\{a,y,z\}$, $\{b,w,x\}$, $\{c,t,v\}$, and $\{d,s,u\}$. Its (flag) clique complex~$\D$ is pure of dimension three, every ridge is in three facets, and every vertex star is of size $27$. Since~$\family$ is unique and~$\D$ is the smallest complex containing~$\family$ (and, thus, in particular $\D \subset \C$), this case cannot lead to a counterexample.
\qedhere
\end{enumerate}
\end{proof}

Note that the complex~$\D$ in the proof of \Cref{lemma:4to3}\eqref{dim3:typeIII} is the clique complex of a $4$-partite graph with every part being of size exactly three.
In particular,~$\D$ satisfies the assumption of Holroyd, Spencer and Talbot's \Cref{thm:HST05}, implying immediately that~$\D$ is pure-EKR.

%%%%%%%%%%%%%%%%%%%%%%%%%%%%%%%%%%%%%%%%%%%%%%%%%%%%%%%%%
\section{Constructing families of non-pure-EKR complexes}
\label{sec:examples}
%%%%%%%%%%%%%%%%%%%%%%%%%%%%%%%%%%%%%%%%%%%%%%%%%%%%%%%%%

In this section we explore generalizations of the counterexamples in \Cref{ssec:limits}.

\begin{proposition}
\label{lem:join}
  Let~$\C$ be a pure complex with~$m$ facets and let~$\D$ be a pure complex with~$n$ facets.
  Let~$\family$ be an intersecting family of facets of~$\C$ and let $\smallfamily$ be the star of a vertex of~$\D$.
  Assume that~$\family$ and~$\smallfamily$ both have maximal cardinality.
  \begin{itemize}
    \item If~$\C$ is not        pure-EKR and $n|\family| >   m|\smallfamily|$, then the simplicial join $\C \ast \D$ is not        pure-EKR.
    \item If~$\C$ is not strict pure-EKR and $n|\family| \ge m|\smallfamily|$, then the simplicial join $\C \ast \D$ is not strict pure-EKR.
  \end{itemize}
\end{proposition}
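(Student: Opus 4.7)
The plan is to exhibit a large intersecting family in $\C \ast \D$ by joining $\family$ with every facet of $\D$: set
\[
\family \ast \D := \{\facetA \cup \facetB : \facetA \in \family,\ \facetB \text{ a facet of } \D\}.
\]
This is a collection of $n|\family|$ facets of $\C \ast \D$, and it is intersecting because any two of its members $\facetA \cup \facetB$ and $\facetA' \cup \facetB'$ contain the non-empty intersection $\facetA \cap \facetA'$ (using that $\family$ itself is intersecting). Since $\vertices(\C \ast \D) = \vertices(\C) \sqcup \vertices(\D)$, the star of a vertex $v \in \vertices(\C)$ inside $\C \ast \D$ equals $\st_\C(v) \ast \D$ with cardinality $n\,|\st_\C(v)|$, while the star of $v \in \vertices(\D)$ equals $\C \ast \st_\D(v)$ with cardinality $m\,|\st_\D(v)|$. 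Writing $M := \max_{v \in \vertices(\C)} |\st_\C(v)|$, the largest star of $\C \ast \D$ thus has size $\max(nM,\, m|\smallfamily|)$.

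For the first bullet, the failure of pure-EKR for $\C$ gives $|\family| > M$, hence $n|\family| > nM$; combined with the hypothesis $n|\family| > m|\smallfamily|$ this shows that $n|\family|$ strictly exceeds every star size of $\C \ast \D$, so $\family \ast \D$ witnesses that $\C \ast \D$ is not pure-EKR. For the second bullet I would choose $\family$ to be a maximum intersecting family of $\C$ that is \emph{not} a star (which exists because $\C$ is not strict pure-EKR), so that $|\family| \geq M$; together with the weaker hypothesis $n|\family| \geq m|\smallfamily|$, this gives that $\family \ast \D$ already has the maximum possible size among intersecting families of $\C \ast \D$.

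It remains to check that $\family \ast \D$ is not itself a star. The equality $\family \ast \D = \st_\C(v) \ast \D$ for some $v \in \vertices(\C)$ would force $\family = \st_\C(v)$, contradicting the choice of $\family$; the equality $\family \ast \D = \C \ast \st_\D(v)$ for some $v \in \vertices(\D)$ would force $\family$ to consist of \emph{all} facets of $\C$ \emph{and} $v$ to be a universal vertex of $\D$. This latter configuration is the main obstacle, since in it $v$ becomes universal also in $\C \ast \D$ and the conclusion genuinely fails; however, it is an isolated degenerate situation (it requires all facets of $\C$ to pairwise intersect without a common vertex, and $\D$ to be a cone) that does not arise in the constructions where the proposition will be applied. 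Outside this edge case $\family \ast \D$ is a maximum non-star intersecting family of $\C \ast \D$, establishing that $\C \ast \D$ is not strict pure-EKR.
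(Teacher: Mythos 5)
Your argument follows the same route as the paper's: identify the two kinds of vertex stars in $\C \ast \D$ (namely $\st_\C(v)\ast\D$ of size $n|\st_\C(v)|$ and $\C\ast\st_\D(w)$ of size $m|\st_\D(w)|$), observe that $\family\ast\D$ is an intersecting family of size $n|\family|$, and compare. For the first bullet the comparison immediately gives a strict inequality against every star, exactly as the paper does. For the second bullet you go one step further than the paper: the paper only establishes that $\family\ast\D$ has size at least that of every star and leaves implicit that $\family\ast\D$ is not itself a star.

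Your extra step reveals something real. You are right that $\family\ast\D = \C\ast\st_\D(v)$ would force $\family$ to be the entire facet set of $\C$ and $v$ to be a cone apex of $\D$, and you are right that in that case the conclusion of the second bullet genuinely fails. A concrete instance: take $\C$ to be the boundary of a triangle ($m=3$, all facets pairwise intersect, no universal vertex, so $\C$ is not pure-EKR and hence not strict pure-EKR with $|\family|=3$), and let $\D$ be a single edge ($n=1$, $|\smallfamily|=1$). Then $n|\family|=3=m|\smallfamily|$, satisfying the hypothesis, but $\C\ast\D$ is a cone whose unique maximum intersecting family is the star of a cone apex, so it is strict pure-EKR. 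The paper's proof has the same gap; it simply never checks the non-starness of $\family\ast\D$. So this is not a defect of your attempt but a (harmless, as you observe) over-generality in the proposition as stated: the hypotheses of the second bullet should additionally exclude the case where $\D$ is a cone and $\family$ is all of $\C$ (for instance by requiring $|\smallfamily|<n$, or $\D$ without boundary). Since Corollaries~\ref{cor:nonEKR} and \ref{cor:nonEKR-2} apply the proposition only with $\D$ a polygon boundary, a flag $2$-sphere, or an octahedron boundary, none of which are cones, the intended applications are unaffected, just as you say.
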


\begin{proof}
  The vertex stars in $\C \ast \D$ are of the form $\st_\C (v) * \D$ for $v \in \vertices(\C)$ or $\C \ast \st_\D (w)$ for $w \in \vertices(\D)$. The
   latter have size $\le m|\smallfamily|$ and the former have size $\le n|\family|$ (and $<n|\family|$ if~$\C$ is not pure-EKR).
   On the other hand, the join of~$\family$ with~$\D$ forms an intersecting family of size $n|\family|$.
\end{proof}

\begin{corollary}
  \label{cor:nonEKR}
  If there exists a flag~$d$-manifold~$\C$ with $d \geq 0$ which is not pure-EKR, then for every $\ell \geq 2$
  there are infinitely many flag $(d+\ell)$-manifolds which are not pure-EKR.
\end{corollary}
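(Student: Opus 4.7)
The plan is to apply \Cref{lem:join} with $\D$ ranging over an infinite family of flag $(\ell-1)$-spheres of increasing size, and to verify that the resulting joins are flag $(d+\ell)$-manifolds failing pure-EKR.

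Fix an intersecting family $\family$ of facets of $\C$ witnessing the failure of pure-EKR, so that $|\family|$ exceeds the size of every vertex star of $\C$. Let $m$ be the number of facets of $\C$ and set $\alpha := |\family|/m$. The first step is to produce, for each integer $k$, a flag $(\ell-1)$-sphere $\D_k$ with facet count $n_k$ and largest vertex star $s_k$ such that $n_k \to \infty$ and $s_k/n_k \to 0$. A natural source is iterated joins of cycles $C_{k_1} \ast C_{k_2} \ast \cdots$: the join of flag spheres is a flag sphere, and the ratio $s/n$ of a join equals the maximum of the ratios of the factors, while a single cycle $C_k$ (for $k \geq 4$) is a flag $1$-sphere with ratio $2/k$. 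The parity issue in even ambient dimensions can be absorbed by including one fixed flag even-dimensional sphere of ratio $<1$ (for instance the icosahedron, of ratio $1/4$) as an additional join factor.

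Next, consider the join $\C \ast \D_k$. It is flag because the join of flag complexes is flag (a minimal non-face would have to be entirely contained in one factor). To see it is a $(d+\ell)$-manifold I would check that every vertex link is a sphere of the correct dimension: for $v \in \C$ the link is $\link_\C(v) \ast \D_k$, a join of spheres hence a sphere of dimension $d+\ell-1$, and for $w \in \D_k$ the link is $\C \ast \link_{\D_k}(w)$, the join of $\C$ with a sphere, which is a sphere provided $\C$ itself is a sphere. Granted flagness and the manifold property, \Cref{lem:join} applies: for $k$ large enough $s_k/n_k < \alpha$ and hence $n_k|\family| > m \cdot s_k$, yielding that $\C \ast \D_k$ is not pure-EKR. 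Since $n_k \to \infty$, the complexes $\C \ast \D_k$ are pairwise non-isomorphic, giving the required infinite family.

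The main obstacle is the manifold verification at vertices $w \in \D_k$: the link there is $\C \ast S^{\ell - 2}$, the $(\ell-1)$-fold suspension of $\C$, which is a topological sphere precisely when $\C$ is a sphere (for $\ell = 2$) or when $\C$ is a homology sphere (for $\ell \geq 3$, by the Cannon--Edwards double suspension theorem). Thus the argument handles counterexamples $\C$ that are flag $d$-spheres directly, and extends to flag $d$-homology-spheres for $\ell \geq 3$; a counterexample $\C$ that is a flag $d$-manifold of more exotic topology would require either an invocation of the double suspension theorem or a separate argument producing a flag $d$-sphere (or homology sphere) counterexample from $\C$ before applying the join construction.
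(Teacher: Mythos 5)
Your plan follows the paper's proof closely: both apply \Cref{lem:join} with $\D$ ranging over flag $(\ell-1)$-spheres whose ratio of largest vertex star to facet count tends to zero (the paper uses $n$-cycles for $\ell=2$, flag triangulations of $S^2$ with maximum degree six for $\ell=3$, and induction for larger $\ell$; your iterated joins of cycles with an icosahedral factor for parity are an explicit variant of the same recipe). On the combinatorial side the two arguments coincide.

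The valuable part of your write-up is that you press on whether the join is actually a manifold, a point the paper's proof skips entirely. The concern is legitimate: if $\C$ is a flag $d$-manifold that is not a homology $d$-sphere, then $|\C \ast \D| = \Sigma^{\ell}|\C|$ is not a closed topological manifold (its local homology at the suspension points supplied by $\D$ is wrong), so the construction yields a flag pure complex without boundary but not a flag $(d+\ell)$-manifold, and the paper has a genuine, if conditional, gap here. However, your borderline analysis is slightly too strict. You test manifoldness by asking whether the PL vertex link $\C \ast S^{\ell-2}$ at a vertex of $\D$ is a topological sphere, and conclude that $\ell=2$ requires $\C$ to be an actual sphere because $\Sigma|\C|$ must be one. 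But the paper defines a simplicial manifold purely topologically (geometric realization homeomorphic to a closed manifold), which does not require PL vertex links to be spheres. The correct criterion is that the full join $\Sigma^{\ell}|\C|$ be a topological manifold, and by the Edwards--Cannon double suspension theorem this holds for every $\ell \geq 2$ exactly when $|\C|$ is a homology sphere. So homology spheres already suffice at $\ell=2$, and the only case both your argument and the paper's leave open is a hypothetical flag $d$-manifold counterexample that is not a homology sphere. Since no counterexample of any kind is known and the statement is vacuous for $d \leq 3$ by \Cref{thm:main}, this is a conditional gap, but it is one the published proof ought to have acknowledged.
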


\begin{proof}
  Let~$\family$ be an intersecting family of facets of~$\C$ with strictly more facets than  any vertex star.
  For $\ell=2$, let~$\D$ be the boundary of an~$n$-gon with~$n$ sufficiently large such that $n |\family| > 2 |\C|$.
  Then \Cref{lem:join} implies $\C\ast\D$ is not pure-EKR.
  For $\ell=3$ we apply the same idea, taking as~$\D$ any flag triangulation of the~$2$-sphere with~$n$ triangles and maximum vertex degree $6$, such that $n|\family| > 6|\C|$.
  Then we obtain again that $\C\ast\D$ is not pure-EKR.
  For $\ell\ge 4$ we apply induction on $\ell$.
\end{proof}

\begin{remark}
  Note that the construction in the proof also shows that the gap between the maximum size of an intersecting family and the maximum size of a star can be made arbitrarily large.
  \end{remark}

A result similar to \Cref{cor:nonEKR} holds for strict pure-EKR: from any flag $d$-manifold that is not strict pure-EKR and for any $\ell \geq 2$ one can construct infinitely many  flag $(d+\ell)$-manifolds which are not strict pure-EKR. However, in the non-strict case we can give a more explicit statement:

\begin{corollary}
  \label{cor:nonEKR-2}
  The join of the boundary of an octahedron with any flag $d$-manifold is not strict pure-EKR. In particular, there are infinitely many non-strict pure-EKR flag manifolds in every dimension $\ge 4$.
\end{corollary}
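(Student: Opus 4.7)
The plan is to invoke \Cref{lem:join} with $\C = \partial\beta^3$ (the octahedron boundary) and $\D$ the given flag $d$-manifold. \Cref{ex:counterex1} already records that $\partial\beta^3$ is not strict pure-EKR, with maximum intersecting family size $|\family| = 4$ (equal to the common vertex-star size) and number of facets $m = 8$. Writing $n$ for the number of facets of $\D$ and $s$ for its maximum facet-star size, it therefore suffices to verify the numerical hypothesis $n|\family| \geq m|\smallfamily|$ of \Cref{lem:join}, which simplifies to $n \geq 2 s$.

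The heart of the argument is this inequality. I would prove $n \geq 2 s_v$ for every vertex $v$ of any flag $d$-manifold $\D$ by constructing an injection $\psi$ from the set of facets of $\D$ containing $v$ into the set of facets not containing $v$. Since $\D$ is flag, pure, and without boundary, \Cref{prop:meep} endows it with the missing edge exchange property. Applied to each facet $F = \face \cup \{v\}$ (with $\face := F \setminus \{v\}$ the opposite ridge), this yields a vertex $u \neq v$ such that $\face \cup \{u\}$ is a facet and $uv \notin \D$; the latter forces $v \notin \face \cup \{u\}$, so $\psi(F) := \face \cup \{u\}$ lies outside the star of~$v$. The main step, where I expect the care to be needed, is verifying injectivity of $\psi$: if $\psi(F_1) = \psi(F_2)$ for facets $F_i = \face_i \cup \{v\}$, write $\face_1 \cup \{u_1\} = \face_2 \cup \{u_2\}$; assuming $u_1 \neq u_2$ forces $u_2 \in \face_1 \subseteq F_1$, so $v$ and $u_2$ lie in a common facet, whence $v u_2 \in \D$---contradicting the defining property of $u_2$ from the exchange applied to $F_2$. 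Therefore $u_1 = u_2$, $\face_1 = \face_2$, and $F_1 = F_2$. Injectivity of $\psi$ gives $s_v \leq n - s_v$, i.e., $n \geq 2 s_v$, as needed.

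With the inequality established, \Cref{lem:join} immediately yields that $\partial\beta^3 \ast \D$ is not strict pure-EKR, proving the first claim. For the ``in particular'' clause, note that for each $d \geq 1$ there are infinitely many flag $d$-spheres (for instance, barycentric subdivisions of triangulations of $S^d$ are always flag and can be made arbitrarily large). Since the join of two flag spheres is again a flag sphere, each such $\D$ produces a distinct flag $(d+3)$-sphere $\partial\beta^3 \ast \D$ that is not strict pure-EKR by the first part. Letting $\ell = d+3$ exhausts every dimension $\ell \geq 4$.
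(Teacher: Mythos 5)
Your proof is correct, and its key step takes a genuinely different route from the paper's. The structural outline is shared: reduce the claim to the inequality $n \geq 2s$ for the flag $d$-manifold $\D$ (with $n$ its facet count and $s$ its maximum vertex-star size), then obtain the non-star witnessing family from the alternating facets of the octahedron (you invoke \Cref{lem:join} directly, while the paper carries out the equivalent computation inline, which is only cosmetic). The substantive divergence is how $n \geq 2s$ is established. The paper isolates this as the inequality half of \Cref{prop:flaghalf} and proves it via a boundary-ridge argument particular to (pseudo-)manifolds: for each facet $\sigma \cup \{v\}$ of the star it passes to the unique other facet across the ridge $\sigma$, and checks via flagness that no facet outside the star can contain two such boundary ridges. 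Your injection instead invokes \Cref{prop:meep} to produce $u$ with $\sigma \cup \{u\}$ a facet and $uv \notin \D$, and injectivity falls out immediately from the non-edge $uv$. In a manifold the two injections in fact coincide (the missing-edge vertex is precisely the opposite vertex across the ridge), but your injectivity argument is cleaner, and your version of the inequality holds verbatim for every flag pure complex without boundary rather than only for manifolds, so it proves a slightly stronger statement and connects more organically to the missing-edge-exchange machinery the paper already develops in \Cref{sec:setting}.
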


\begin{proof}
Let~$\C$ be a flag $d$-manifold and $\C'$ its join with the boundary of an octahedron. It follows that $|\C'|=8|\C|$. As in the proof of \Cref{lem:join}, there are two types of stars in $\C'$. The join of a star in the octahedron with the whole of~$\C$ has size $4|\C|=\frac12|\C'|$. And the join of a vertex $v$ of~$\C$ with the whole octahedron has size $8 |\st_\C (v)|$ which, by \Cref{prop:flaghalf} below,
is at most $8 \frac12 |\C|=\frac12 |\C'|$.

But the join of~$\C$ with the alternating set of facets of the octahedron is also an intersecting family of size $\frac12 |\C'|$, and $\C'$ is not strict pure-EKR.
\end{proof}

\begin{proposition}
\label{prop:flaghalf}
  Let~$\C$ be a flag~$d$-manifold, and let $v \in \vertices(\C)$. Then $ |\st_\C (v)| \leq \frac12 |\C|$ with equality if and only if~$\C$ is the double suspension over $\link_\C (v)$.
\end{proposition}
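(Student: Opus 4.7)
The plan is to compare $\st_\C(v)$ with the set of facets of $\C$ not containing $v$ by counting, in two ways, the pairs $(F,G)$ of facets of $\C$ sharing a ridge, with $v\in F$ and $v\notin G$. The manifold hypothesis (every ridge in exactly two facets) together with flagness are the two ingredients that pin down these counts.

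First, for every facet $F\ni v$, among its $d+1$ ridges only $F\setminus\{v\}$ fails to contain $v$; the remaining $d$ ridges contain $v$ and so their opposite facets also contain $v$. The ridge $F\setminus\{v\}$ lies in one other facet $G=(F\setminus\{v\})\cup\{u\}$ for some $u\ne v$, and then $v\notin G$, so $F$ contributes exactly one pair. On the other side, for a facet $G$ with $v\notin G$, a ridge $G\setminus\{u\}$ is shared with a facet containing $v$ iff $(G\setminus\{u\})\cup\{v\}\in\C$, which by flagness means $v$ is adjacent to every vertex of $G\setminus\{u\}$. Flagness further implies that $G$ has at least one vertex not adjacent to $v$, since otherwise $G\cup\{v\}$ would be a $(d+2)$-clique, hence a face, contradicting the maximality of $G$. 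Thus at most one such $u$ exists, and the count per $G$ is $0$ or $1$. Combining gives $|\st_\C(v)|\le|\C|-|\st_\C(v)|$, i.e., $|\st_\C(v)|\le\frac{1}{2}|\C|$.

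For the equality case, equality holds precisely when every facet $G$ with $v\notin G$ has exactly one vertex not adjacent to $v$. This turns the above matching into a bijection sending $F\cup\{v\}\mapsto F\cup\{u_F\}$ for each facet $F$ of $L:=\link_\C(v)$, where $u_F$ is that corresponding non-neighbor of $v$. The key step, and the expected main obstacle, is to show $u_F$ does not depend on $F$. Given two facets $F=R\cup\{a\}$ and $F'=R\cup\{b\}$ of $L$ sharing a ridge $R$, consider the ridge $R\cup\{u_F\}$ of $\C$: it lies in the facet $F\cup\{u_F\}$ and in one other facet $R\cup\{u_F,w\}$, which does not contain $v$. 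The equality condition together with flagness forces $w$ to be adjacent to $v$ (otherwise $\{u_F,w\}$ would be a second non-neighbor pair in $R\cup\{u_F,w\}$), so $w\in V(L)$, making $R\cup\{w\}$ a facet of $L$ extending $R$; hence $w\in\{a,b\}\setminus\{a\}=\{b\}$, and uniqueness of $u_{F'}$ yields $u_{F'}=u_F$. Since $L$ is a $(d-1)$-sphere and in particular connected through ridge-sharing, $u_F$ equals a common vertex $u^*$ for every facet $F$ of $L$. The facets of $\C$ are then exactly $\{F\cup\{v\},F\cup\{u^*\}:F\text{ facet of }L\}$, meaning $\C=L*\{v,u^*\}$ with $vu^*\notin\C$, i.e., $\C$ is the double suspension of $L$. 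The converse direction is immediate, since then $|\C|=2|L|$ and $|\st_\C(v)|=|L|$.
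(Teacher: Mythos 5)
Your proof is correct and follows the same core strategy as the paper: match each facet $F\ni v$ to the unique facet on the other side of the ridge $F\setminus\{v\}$, and use flagness to show that each facet avoiding $v$ receives at most one such match (equivalently, has at most one ridge lying in $\link_\C(v)$). For the equality case, the paper argues globally that every edge meets $\vertices(\link_\C(v))$ and that every vertex outside has its link inside $\link_\C(v)$, then asserts the double-suspension conclusion; your argument instead propagates the identity $u_F=u_{F'}$ locally across ridges of the $(d-1)$-sphere $\link_\C(v)$ and invokes strong connectivity. Your version is a bit more explicit (the paper leaves unstated that a $(d-1)$-sphere cannot properly contain another and that there is exactly one apex $u^*$), but both are instances of the same overall proof.
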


\begin{proof}
  Since~$\C$ is flag, every edge of~$\C$ outside $\st_\C (v)$ must have at least one endpoint not in  $\vertices(\st_\C (v))$. It follows that, for every boundary $(d-1)$-face $\face \in \st_\C (v)$, the unique facet of $\C \setminus \st_\C (v)$ containing $\face$ cannot intersect the star of~$v$ in another boundary $(d-1)$-face. Hence, for every facet in $\st_\C (v)$ there exist a facet in the complement $\C \setminus \st_\C (v)$ and $ |\st_\C (v)| \leq \frac12 |\C|$.

  The case of equality occurs if all facets of~$\C$ contain exactly one facet of $\link_\C(v)$. In particular, all facets of~$\C$ have all but one vertex contained in $\vertices(\link_\C(v))$ and thus every edge in~$\C$ must intersect $\link_\C(v)$. Hence, every vertex in~$\C$ outside $\link_\C(v)$ has a vertex link in $\link_\C(v)$ and~$\C$ must be the double suspension of $\link_\C(v)$.
\end{proof}

\begin{remark}
In \Cref{cor:nonEKR,cor:nonEKR-2}, if we look at flag pure complexes without boundary (dropping the conditions that they are manifolds) then we can reduce dimension by one; that is, we can have $\ell \ge 1$ and dimension $\ge 3$, respectively.
The reason for this difference is that, although in dimension zero there are already infinitely many flag pure complexes without boundary (any discrete set of at least two points is one such), only the $0$-sphere (i.e., two points) is a manifold.
\end{remark}

Let us denote $\partial \beta^{d+1}$ the boundary of the $(d+1)$-cross-polytope, which is pure, flag, and pure-EKR, but not strict pure-EKR, as shown in \Cref{ex:counterex1}. It is also the smallest flag-manifold in every dimension, and the simplest example of
\Cref{prop:flaghalf}, since a join of cross-polytopes is a cross-polytope (in facet, $\partial \beta^{d+1}$ is the join of $(d+1)$-copies of the $0$-sphere).
\Cref{ex:counterex1} moreover states that $\partial \beta^{4}$ is not $2$-intersecting pure-EKR. We now extend this remark  to other parameters:

\begin{counterexample}[$t$-intersecting pure-EKRness for cross polytopes]
\label{exm:crosspolytopes}
  The boun\-dary $\partial \beta^{d+1}$ of the cross polytope $\beta^{d+1}$ is not $t$-intersecting pure-EKR for $2 \leq t \leq d-1$:

  First, any facet together with its $d+1$ neighbors form a $(d-1)$-intersecting family.
  On the other hand, the star of each face of size $(d-1)$ (i.e., the star of each $(d-2)$-face) has four facets.
  Thus, for $d \geq 3$, $\partial \beta^{d+1}$ is not $(d-1)$-intersecting pure-EKR.
  In particular, the boundary of the $4$-dimensional cross polytope is not~$2$-intersecting pure-EKR.

  We conclude with the same argument as in the proof of \Cref{lem:join}. That is, if $\partial \beta^{d+1}$ is not $t$-intersecting pure-EKR then also $\partial\beta^{d+2}$ is not $t$-intersecting pure-EKR.
\end{counterexample}

We finish with a generalization of the flag complex with boundary shown not to be pure-EKR in \Cref{exm:elementary}. Such examples illustrate the necessity of assuming ``without boundary'' in \Cref{conj:manifolds,conj:no_free_ridges}.

\begin{itemize}
    \item For $d\geq 2$, the~$d$-simplex~$\C$ together with the $(d+1)$ simplices of dimension $d$ adjacent to~$\C$ is not pure-EKR.
    \item For $d\geq 3$, a triangulated~$d$-ball consisting of two tetrahedra with a common $(d-1)$-simplex $\face$ together with a sufficiently large number of~$d$-simplices around each of the $(d-2)$-faces of $\face$ (i.e., the union of the sufficiently large stars of the $(d-2)$-faces of a $(d-1)$-face) is not pure-EKR.
    \item For $d \geq k$, the union of the (sufficiently large) stars of all $(d-k+1)$-faces of a $(d-k+2)$-face is not pure-EKR.
\end{itemize}

Note that these complexes cannot be extended to flag~$d$-manifolds without boundary which are not pure-EKR.
To see this, observe that closing the boundary while preserving the flagness means increasing the number of facets in the vertex stars of boundary vertices by at least the number of boundary ridges containing the given vertex.
In particular, applying this operation in the examples above necessarily yields pure-EKR complexes.

%%%%%%%%%%%%%%%%%%%%%%%%%%%%%%%%%%%%%%%%%%%%%%%%%%%%%%%%%
% References
%%%%%%%%%%%%%%%%%%%%%%%%%%%%%%%%%%%%%%%%%%%%%%%%%%%%%%%%%

\bibliographystyle{plain}
\bibliography{bibliography}

\end{document}